\DeclareRobustCommand*{\bfseries}{
	\not@math@alphabet\bfseries\mathbf
	\fontseries\bfdefault\selectfont
	\boldmath
}
\tikzstyle{black dot}=[fill=black, draw=black, shape=circle, minimum size=3pt, inner sep=0pt]
\tikzstyle{black dot small}=[fill=black, draw=black, shape=circle, minimum size=3pt, inner sep=0pt]
\tikzstyle{big white circle}=[fill=white, draw=black, shape=circle, minimum width=0.75cm]
\tikzstyle{white dot big}=[fill=white, draw=black, shape=circle, inner sep=1pt]
\tikzstyle{white dot}=[fill=white, draw=black, shape=circle, minimum size=3pt, inner sep=0pt]
\tikzstyle{flat box}=[fill=white, draw=black, shape=rectangle, minimum width=2.5cm, minimum height=0.5cm]
\tikzstyle{square}=[fill=white, draw=black, shape=rectangle]
\tikzstyle{flat box 2}=[fill=white, draw=black, shape=rectangle, minimum height=0.5cm, minimum width=1.0cm]
\tikzstyle{triangle}=[fill=white, draw=black, shape=regular polygon, regular polygon sides=3, inner sep=0pt, minimum size=2pt]
\tikzstyle{up-triangle}=[fill=white, draw=black, shape=regular polygon, regular polygon sides=3, shape border rotate=180, inner sep=0pt, minimum size=3pt]
\tikzstyle{thick}=[-, line width=1.5pt]
\tikzstyle{mid arrow}=[-, postaction={on each segment={mid arrow}}]
\tikzstyle{end arrow}=[->, >=latex]
\tikzstyle{red mid arrow}=[-, draw={rgb,255: red,214; green,42; blue,51}, postaction={on each segment={mid arrow}}, line width=1pt]
\tikzstyle{blue}=[-, draw={rgb,255: red,23; green,37; blue,167}, line width=1pt, dashed]
\tikzstyle{blue mid arrow}=[-, draw={rgb,255: red,23; green,37; blue,167}, postaction={on each segment={mid arrow}}, line width=1pt]
\tikzstyle{semicircle line}=[-, postaction={on each segment={red semicircle}}]
\tikzstyle{black dashed}=[-, draw=black, dashed]
\tikzstyle{dashed black thick}=[-, draw=black, dashed, line width=1.2pt]
\tikzstyle{red}=[-, draw=red]
\tikzset{
	on each segment/.style={
		decorate,
		decoration={
			show path construction,
			moveto code={},
			lineto code={
				\path [#1]
				(\tikzinputsegmentfirst) -- (\tikzinputsegmentlast);
			},
			curveto code={
				\path [#1] (\tikzinputsegmentfirst)
				.. controls
				(\tikzinputsegmentsupporta) and (\tikzinputsegmentsupportb)
				..
				(\tikzinputsegmentlast);
			},
			closepath code={
				\path [#1]
				(\tikzinputsegmentfirst) -- (\tikzinputsegmentlast); 
			},
		},
	},
	mid arrow/.style={postaction={decorate,decoration={
				markings,
				mark=at position .5 with {\arrow[#1]{stealth}}
	}}},
	red semicircle/.style={postaction={decorate,decoration={
				markings,
				mark=at position .65 with {
					\arrow[#1]{Circle[left,fill=red,length=6pt,width=6pt]}
				}
	}}},
}
\newtheoremstyle{important-thm}
{3pt}
{3pt}
{\slshape}
{}
{\bfseries}
{.}
{.5em}
{}
\theoremstyle{important-thm}
\newtheorem{thm}{Theorem}[subsection]
\newtheorem{theorem}[thm]{Theorem}
\newtheorem{lemma}[thm]{Lemma}
\newtheorem{lem}[thm]{Lemma}
\newtheorem{prop}[thm]{Proposition}
\newtheorem{proposition}[thm]{Proposition}
\newtheorem{cor}[thm]{Corollary}
\newtheorem{corollary}[thm]{Corollary}
\theoremstyle{definition}
\newtheorem{rmk}[thm]{Remark}
\newtheorem{defn}[thm]{Definition}
\newtheorem{rem}[thm]{Remark}
\newtheorem{ntt}[thm]{Notation}
\newtheorem{const}[thm]{Construction}
\newenvironment{myproof}[1][Proof]{\begin{proof}[\textsc{#1}]}{\end{proof}}
\DeclarePairedDelimiterX\setc[2]{\{}{\}}{\,#1 \;\delimsize\vert\; #2\,}
\newcommand{\SVect}{\operatorname{SVect}}
\renewcommand{\mod}{\operatorname{mod}}
\renewcommand{\dim}{\operatorname{dim}}
\newcommand{\R}{\mathbb{R}}
\newcommand{\Bord}[1]{{\mathcal{B}\hspace{-.5pt}ord^{\hspace{1pt} #1,\mathrm{oc}}}}
\newcommand{\Bordori}{{\mathcal{B}\hspace{-.5pt}ord^{\hspace{1pt} \mathrm{oc}}}}
\newcommand{\Kn}[2]{{\mathcal{K}\hspace{-.5pt}n\mathcal{F}\hspace{-.5pt}rob^{\hspace{1pt} #1}\hspace{-1.5pt}\left( #2 \right)}}
\newcommand\Cb            {\mathbb{C}}
\newcommand\Ib            {\mathbb{I}}
\newcommand\Rb            {\mathbb{R}}
\newcommand\Zb            {\mathbb{Z}}
\newcommand\Cc            {\mathcal{C}}
\newcommand\Sc            {\mathcal{S}}
\newcommand\Sb            {\mathbb{S}^1}
\newcommand\Zc            {\mathcal{Z}}
\newcommand{\Cl}{C\hspace*{-1pt}\ell}
\numberwithin{equation}{section}
\definecolor{Blue}  {rgb} {0.282352,0.239215,0.803921}
\definecolor{Green} {rgb} {0.133333,0.545098,0.133333}
\definecolor{Red}   {rgb} {0.803921,0.000000,0.000000}
\definecolor{Violet}{rgb} {0.580392,0.000000,0.827450}
\newcommand{\GL}{\mathrm{GL}}
\newcommand{\id}{\mathrm{id}}
\newcommand{\Fun}{\operatorname{Fun}}
\newcommand{\Set}{\operatorname{Set}}
\title{}
\author{}
\newcommand{\dpar}{\partial_{\mathrm{par}}}
\newcommand{\dfree}{\partial_{\mathrm{free}}}
\newcommand{\din}{\partial_{\mathrm{in}}}
\newcommand{\dout}{\partial_{\mathrm{out}}}
\newcommand{\Bin}{B_{\mathrm{in}}}
\newcommand{\Bout}{B_{\mathrm{out}}}
\def\on{\operatorname}
\title{Topological field theories on open-closed $r$-spin surfaces}
\author{Walker H. Stern and L\'or\'ant Szegedy}
\begin{document}
	
\thispagestyle{empty}
\def\thefootnote{\fnsymbol{footnote}}
\begin{flushright}
ZMP-HH/20-11\\
Hamburger Beitr\"age zur Mathematik 2020-08
\end{flushright}
\vskip 3em
\begin{center}\LARGE
	Topological field theories on open-closed $r$-spin surfaces
\end{center}

\vskip 2em
\begin{center}
{\large 
Walker H. Stern~${}^{a}$~~and~~L\'or\'ant Szegedy~${}^{b}$~\footnote{Emails: {\tt ws7jx@virginia.edu}~,~{\tt lorant.szegedy@univie.ac.at}}}
\\

\vskip 2em
${}^{a}$ Department of Mathematics, University of Virginia\\ 
141 Cabell Dr. Charlottesville, VA 22903 \\[1em]

${}^{b}$ Institute of Science and Technology Austria\\
Am Campus 1, 3400 Klosterneuburg, Austria
~\footnote{Present address:
Faculty of Physics, University of Vienna,
Boltzmanngasse 5, 1090 Vienna, Austria
}

\end{center}

\vskip 2em

\begin{abstract}
	In this article, we establish a connection between two models for $r$-spin structures on surfaces: the marked PLCW decompositions of Novak and Runkel-Szegedy, and the structured graphs of Dyckerhoff-Kapranov. We use these models to describe $r$-spin structures on open-closed bordisms, leading to a generators-and-relations characterization of the 2-dimensional open-closed $r$-spin bordism category. This results in a classification of 2-dimensional open closed field theories in terms of algebraic structures we term ``knowledgeable $\Lambda_r$-Frobenius algebras''. We additionally extend the state sum construction of closed $r$-spin TFTs from a $\Lambda_r$-Frobenius algebra $A$ with invertible window element of Novak and Runkel-Szegedy to the open-closed case. The corresponding knowledgeable $\Lambda_r$-Frobenius algebra is $A$ together with the $\mathbb{Z}/r$-graded center of $A$.
\end{abstract}
{\small
	\tableofcontents
}
	
	\section{Introduction}

The classification of 2-dimensional oriented topological field theories in terms of commutative Frobenius algebras is familiar to anyone who has even briefly encountered the subject \cite{Dijkgraaf:1989phd,Abrams:1996ty,Kock:2004fa}. Pictures of circles, and bordisms between them, have become a standard way of providing intuition for the more general framework. Only slightly less well-known are the classifications of the open sector (whose objects are intervals, and whose bordisms are surfaces with corners), and the classification of open-closed topological field theories in terms of `knowledgeable Frobenius algebras' in \cite{Lazaroiu:2001oc,Moore:2006db,Lauda:2008oc}. 

In the recent literature, there has been increasing interest in topological field theories with additional structure, which may be more relevant to physically interesting situations. These variants on topological field theory can be loosely grouped into two main classes: (1) remembering more information about the topological structure of the bordisms via higher-categorical data (\textsl{extended topological field theory}), or (2) adding addition topological or geometric structure to the bordisms in the source category. 

This paper concerns itself with the latter approach, continuing a program that can be viewed as beginning with the work of Novak \cite{Novak:2015phd}. In any dimension $n$, one can consider the double cover $\pi:\on{Spin}(n)\to \on{SO}(n)$, and equip manifolds with a reduction of structure group along $\pi$. One can then consider a bordism category in which the $n$-dimensional bordisms are equipped with such a reduction of structure group --- called a \textsl{spin structure}. 
Novak and Runkel's initial work on the subject in \cite{Novak:2015NR} provided a combinatorial model for spin structures on surfaces in terms of additional data on a triangulation, and used this to provide a state-sum construction of 2-dimensional spin topological field theories, with an eye towards the potential utility of such a construction in the study of conformal field theories.

However, in this 2-dimensional case, there is a potential for a wider array of `spin structures.' The 2-fold covering $\on{Spin}(2)\to \on{SO}(2)$ is not a universal cover, and, in fact, there is an $r$-fold cover $\on{Spin}_r(2)\to \on{SO}(2)$ for any $0<r<\infty$. We will follow the convention of \cite{Runkel:2018:rs} in writing $r=0$ for the case of the universal cover.  This allows one to consider general \textsl{(2-dimensional) $r$-spin topological field theories} in analogy to the above case
	(the case $r=0$ referring to framed TFTs).
These have already been the subject of study. In \cite{Runkel:2018:rs}, Novak's combinatorial model of $r$-spin structures on surfaces was expanded to ``polygonal'' cell decompositions with additional data. This was then used to provide a state-sum construction for \textsl{closed} $r$-spin topological field theories. In separate work, \cite{Dyckerhoff:2015csg} provide a model for $r$-spin structures on surfaces in terms of additional data on embedded graphs. This model was then used in \cite{Stern:2016stft} to provide a classification of \textsl{open} $r$-spin topological field theories. 

The purpose of the current paper is two-fold. Firstly, we establish the connection between the two combinatorial models for $r$-spin surfaces presented in \cite{Runkel:2018:rs} and \cite{Dyckerhoff:2015csg}, and provide a `dictionary' for translating between them. Secondly, we use this connection to extend the classification of open $r$-spin topological field theories for \cite{Stern:2016stft} to a classification of \textsl{open-closed} $r$-spin topological field theories.

\subsection*{Combinatorial models}

The key components necessary for our classification are a pair of combinatorial models for $r$-spin surfaces. The first model, that of \cite{Runkel:2018:rs}, takes a polygonal cell decomposition of an oriented surface $\Sigma$ ---  a PLCW decomposition (cf. Section \ref{subsec:PLCW} and \cite{Kirillov:2012pl}) --- and equips it with the addition structure of a \textsl{marking}, which is comprised of:
\begin{itemize}
	\item an orientation of each edge of the PLCW decomposition;
	\item a choice of a marked edge associated to each 2-cell of the PLCW decomposition; and 
	\item a $\Zb/r$-valued label for every edge of the PLCW decomposition. 
\end{itemize}
To obtain the corresponding $r$-spin structure, one first equips each 2-cell with the trivial $r$-spin structure (the only such, up to isomorphism). The $\Zb/r$-valued labels are then viewed as specifying transition functions between the two 2-cells joined at a given edge. The edge orientations, together with the orientation on $\Sigma$, give a canonical direction in which to read the transition function.  An additional admissibility criterion at the vertices then ensures that the resulting $r$-spin structure can be extended over the $0$-cells of the PLCW decomposition. 

The second combinatorial model is that of \cite{Dyckerhoff:2015csg}. Given a surface $S$ with a finite set of punctures $M$, one specifies a graph $\Gamma$ embedded in $S\setminus M$, meeting every boundary component, and inducing a homotopy equivalence $|\Gamma|\simeq S\setminus M$. An $r$-spin structure on $S\setminus M$ is then specified by providing the data of a functor $I(\Gamma)\to \Lambda_r$ from the \textsl{incidence category} of $\Gamma$ to the $r$-cyclic category. This amounts to specifying the data of:
\begin{itemize}
	\item An $\Zb/((n+1),r)$-torsor lying over every vertex with $n+1$ incident half-edges;
	\item a $\Zb/2r$-torsor lying over every edge; and
	\item morphisms relating the torsors associated to a vertex $v$ and an edge $e$ if one of the half-edges comprising $e$ is incident to $v$. 
\end{itemize}
These data must satisfy additional compatibilities packaged in the functoriality, as well as compatibility with the orientation of the surface. The $r$-spin structure is obtained by considering the torsor over a vertex $v$ as a `set of sheets' of the trivial $r$-spin structure over a disk containing $v$, and then using the morphisms to define transition functions. 

Both of these models, and the relation between them, are presented in detail in Section \ref{sec:combmod}.  Schematically, the following diagram gives a rough idea of the correspondence. 

\begin{center}
	\begin{tikzpicture}
	\path (0,0) node {\large  PLCW};
	\path (6,0) node {\large  Graph};
	
		\path (0,-0.5) node (a1) {PLCW decomposition};
		\path (6,-0.5) node (b1) {embedded graph};
		\draw[<->] (a1) to node[above]{dual graph} (b1);
		
		\path (0,-1.5) node (a2) {marked edge}; 
		\path (6,-1.5) node (b2) {vertex torsor trivialization};
		\draw[<->] (a2) to  (b2);
		
		\path (0,-2.5) node (a3) {edge orientation};
		\path (6,-2.5) node (b3) {edge torsor trivialization};
		\draw[<->] (a3) to  (b3);
		
		\path (0,-3.5) node (a4) {edge label}; 
		\path (6,-3.5) node (b4) {edge morphisms};
		\draw[<->] (a4) to  (b4);
	\end{tikzpicture}
\end{center}

\subsection*{Classification}

Once we have established our combinatorial models and their relation to one another, we proceed to a classification of open-closed $r$-spin topological field theories. We do this by providing generators and relations for the $r$-spin bordism category --- the relation between the two models allows us to import the classification in the open sector from \cite{Stern:2016stft} into the PLCW formalism. To this end we make heavy use of the oriented classification of \cite{Lauda:2008oc}. In particular, the generators of the open-closed $r$-spin bordism category $\Bord{r}$ are given by $r$-spin structures on the generators of the open-closed oriented bordism category $\Bordori$.

The generators of $\Bord{r}$ form what we call a \textsl{knowledgeable $\Lambda_r$-Frobenius algebra} in $\Bord{r}$. Loosely speaking, a knowledgeable $\Lambda_r$-Frobenius algebra in a symmetric monoidal category $\mathcal{S}$ consists of two compatible structures: 
\begin{enumerate}
	\item A \textsl{$\Lambda_r$-Frobenius algebra} as defined in \cite{Dyckerhoff:2015csg}, i.e.\ a Frobenius algebra $A$ in $\mathcal{S}$ whose Nakayama automorphism $N_A$ satisfies $N_A^r=\on{id}_A$. 
	\item A \textsl{closed $\Lambda_r$-Frobenius algebra}, which consists of 
	\begin{itemize}
		\item a collection $\{C_x\}_{x\in \Zb/r}$ of objects in $\mathcal{S}$ equipped with $\Zb/r$-actions and 
		\item morphisms 
		\begin{align*}
		\mu_{x,y}&:C_x\otimes C_y\to C_{x+y-1}& \eta_1&:\Ib\to C_1\\
		\Delta_{x,y}&:C_{x+y+1}\to C_x\otimes C_y& \varepsilon_{-1}&:C_{-1}\to \Ib
		\ .
		\end{align*}
		which intertwine the $\Zb/r$-actions, (here, the monoidal unit $\mathbb{I}$ is considered to carry the trivial action)
	\end{itemize}
	subject to conditions similar to those governing 
		commutative
	Frobenius algebras.
\end{enumerate}
These two compatible structures must then be related by a pair of morphisms, which satisfy analogues of the knowledge, duality, and Cardy conditions of \cite{Lauda:2008oc}. 

Once we have established that a chosen set of bordisms in $\Bord{r}$ form a knowledgeable $\Lambda_r$-Frobenius algebra in $\Bord{r}$, the classification can be proved in two steps. We first show that any bordism in $\Bord{r}$ can be written as a composition of generators, using the decomposition on the underlying oriented bordism given in \cite{Lauda:2008oc}. We then show that, if two $r$-spin bordisms are isomorphic, their decompositions into generators can be related by the conditions governing knowledgeable $\Lambda_r$-Frobenius algebras. 

After proving this classification, we briefly discuss extending the state-sum construction provided in \cite{Runkel:2018:rs} to the full open-closed category. Just as in in the closed case, the input for the state-sum construction is a $\Lambda_r$-Frobenius algebra $A$ with invertible window element. In this case, the corresponding knowledgeable $\Lambda_r$-Frobenius algebra is formed by $A$ together with the $\Zb/r$-graded center of $A$, as defined in \cite{Runkel:2018:rs}. 

\subsection*{Structure of the paper}

In Section \ref{sec:Bordcats}, we define the open-closed bordism categories, and fix notation and conventions for $r$-spin surfaces. Section \ref{sec:combmod} provides an exposition of the two main combinatorial models --- marked PLCW decompositions and $\Lambda_r$-structured graphs --- giving the relation between the models, and  the relation of the former to $r$-spin surfaces. In Section \ref{sec:FrobAlg}, we provide a detailed exposition of knowledgeable $\Lambda_r$-Frobenius algebras and their connection to the $\Zb/r$-graded centers of $\Lambda_r$-Frobenius algebras with invertible window element. Section \ref{sec:TFTclass} contains the main results of the paper: the classification of open-closed $r$-spin topological field theories, and the state-sum construction of the same. For ease of reading, we provide an appendix, which contains background on the $r$-cyclic category $\Lambda_r$, as well as a computational proof omitted in the main text. 

\subsection*{Acknowledgments} 

We would like to thank Tobias Dyckerhoff, who pointed out the similarity between our previous papers, and suggested we work together. We are also grateful to Ingo Runkel and Nils Carqueville for their helpful comments and suggestions 
	and for Ehud Meir for pointing our a mistake in an earlier version of this paper. 
Both authors were supported, at various times during the preparation of this paper, by the Max Planck Institute for Mathematics in Bonn and the University of Hamburg. Additionally, we would like to thank the RTG 1670 of the University of Hamburg, with which 
	they
were both affiliated. W.H.S. additionally acknowledges the support of the NSF Research Training Group at the
University of Virginia (grant number DMS-1839968) during the revision of the article.

	\section{The open-closed \texorpdfstring{$r$}{r}-spin bordism category}\label{sec:Bordcats}
\subsection{Smooth open-closed bordisms}\label{sec:ocbord}
In the following we recall some notions following \cite{Lauda:2008oc}.
      A \textsl{smooth $1$-dimensional manifold with perimeter} is a 1-dimensional manifold diffeomorphic to the disjoint union of closed intervals.
A \textsl{smooth $2$-dimensional manifold with corners} is a $2$-dimensional manifold such that every point has a neighborhood homeomorphic to 
an open subset of $\Rb_{\ge0}^2$ and such that the transition functions are restrictions to $\Rb_{\ge0}^2$
of diffeomorphisms of open subsets of $\Rb^2$.
In the following we will assume that all manifolds are smooth.

Let $\Sigma$ be a $2$-dimensional manifold with corners and let $p\in\Sigma$. 
Let $c(p)\in\Zb_{\ge0}$ denote the number of zero coefficients in local coordinates $\varphi(p)\in\Rb_{\ge0}^2$, 
which is independent of the choice of coordinates.
A \textsl{connected perimeter of $\Sigma$} is the closure of the component $\setc*{p\in\Sigma}{c(p)=1}$.
A \textsl{perimeter of $\Sigma$} is a disjoint union of pairwise disjoint connected perimeters.
A \textsl{$2$-dimensional manifold with perimeter} is a $2$-dimensional manifold with corners $\Sigma$ 
such that every $p\in\Sigma$ is contained in $c(p)$ different connected perimeters.

A \textsl{$2$-dimensional $\langle2\rangle$-manifold} is a $2$-dimensional manifold with perimeter 
$\Sigma$ with a fixed pair of perimeters $(\dpar\Sigma,\dfree\Sigma)$ of $\Sigma$
such that $\dpar\Sigma\cup\dfree\Sigma=\partial\Sigma$ and
such that $\dpar\Sigma\cap\dfree\Sigma$ is a perimeter of both $\dpar\Sigma$ and of $\dfree\Sigma$.
A \textsl{diffeomorphism of $2$-dimensional $\langle2\rangle$-manifolds} $f:\Sigma\to\Sigma'$
is a diffeomorphism of $2$-dimensional manifolds with corners 
such that $f(\dpar\Sigma)=\dpar\Sigma'$ and $f(\dfree\Sigma)=\dfree\Sigma'$.
In the following we will only write diffeomorphism for short.

By a \textsl{surface} we shall mean an oriented 2-dimensional $\langle2\rangle$-manifold 
and will use the notation $\Sigma=\left( \Sigma,\dpar\Sigma,\dfree\Sigma \right)$. 
Let $\Sigma$ be a compact surface. Then $\dpar\Sigma$ is diffeomorphic to a disjoint union of 
intervals $I=[-1,1]$ and of circles $\Sb$. In the following we define collar neighborhoods of these.

An \textsl{open collar} $U^o$ is an open neighborhood of $I=\left[ -1,1 \right]$ in $I\times\R$.
An \textsl{ingoing} (resp.\ \textsl{outgoing}) \textsl{open collar} 
$U^o_{\mathrm{in}}$ (resp.\ $U^o_{\mathrm{out}}$) is the intersection of a open collar with
the set $I\times\R_{\le0}$ (resp.\ $I\times\R_{\ge0}$).
A \textsl{closed collar} $U^c$ is an open neighborhood of $\Sb$ in $\Cb^{\times}$.
An \textsl{ingoing} (resp.\ \textsl{outgoing}) \textsl{closed collar} 
$U^c_{\mathrm{in}}$ (resp.\ $U^c_{\mathrm{out}}$) is the intersection of a closed collar with
the set $\setc*{z\in\Cb^{\times}}{|z|\ge 1}$ (resp.\ $\setc*{z\in\Cb^{\times}}{|z|\le 1}$).
Note that these collars are surfaces in the above sense with
$\dpar U^o_{\mathrm{in}}=\dpar U^o_{\mathrm{out}}=I\times\left\{ 0 \right\}\simeq I$ and
$\dpar U^c_{\mathrm{in}}=\dpar U^c_{\mathrm{out}}=\Sb$.

A \textsl{boundary parametrization} of a compact surface $\Sigma$ is:
\begin{enumerate}
	\item 
	A disjoint decomposition 
	$\din\Sigma \sqcup \dout\Sigma= \dpar\Sigma$ of the parametrized boundary into ingoing and outgoing boundary with connected components:
	$\Bin=\pi_0(\din\Sigma)$ and $\Bout=\pi_0(\dout\Sigma)$. 
	Each of these sets are allowed to be empty.
	\item
	A collection of ingoing open or closed collars $U_b$, $b \in B_\mathrm{in}$, 
	and outgoing open or closed collars  $V_c$, $c \in B_\mathrm{out}$, 
	together with a pair of orientation preserving embeddings
	\begin{align}
	\phi_\mathrm{in}:\bigsqcup_{b\in B_\mathrm{in}}U_b\rightarrow\Sigma\leftarrow
	\bigsqcup_{c\in B_\mathrm{out}}V_c:\phi_\mathrm{out} \ .
	\label{eq:bdrparam}
	\end{align}
	We require that for each $b$, the restriction $\phi_\mathrm{in}|_{U_b}$ maps $I$ 
	(resp.\ $\mathbb{S}^1$) diffeomorphically to the connected component $b$ of $\Bin$,
	and analogously for $\phi_\mathrm{out}|_{V_c}$.
\end{enumerate}

One would then define open-closed bordisms between open-closed objects,
diffeomorphism and glueing thereof and finally the category of open-closed bordisms \cite[Sec.\,3]{Lauda:2008oc}.
We will instead turn directly to define the category of open-closed $r$-spin bordisms, 
which in the particular case of $r=1$ will agree with the category of open-closed bordisms.

\subsection{\texorpdfstring{$r$}{r}-spin bordisms}\label{sec:rspinbord}

In the first part of this section we briefly recall the notion of $r$-spin structures and morphisms thereof. For further details we refer to \cite{Novak:2015phd,Runkel:2018:rs}.
Then we extend the notion of $r$-spin bordisms from closed to open-closed, meaning that we allow open parametrized boundary components.
We fix $r\in\Zb_{\ge0}$ and write $GL_2^+(\Rb)$ for the positive determinant $2{\times}2$ real matrices.
Let 
\[
p_{GL}^r:\widetilde{GL}_2^r\to GL_2^+(\Rb)
\]
be the $r$-fold cover for $r>0$ and the universal cover for $r=0$.

We write $F_{\Sigma}\to\Sigma$ for the oriented frame bundle of a surface $\Sigma$, which is a $GL_2^+(\Rb)$ principal bundle.
An \textsl{$r$-spin structure on $\Sigma$} is a $\widetilde{GL}_2^r$ principal bundle $P\to\Sigma$ together with
a bundle map $p:P\to F_{\Sigma}$ intertwining the $\widetilde{GL}_2^r$ and $GL_2^+(\Rb)$ actions.
Note that $p:P\to F_{\Sigma}$ is a $\Zb/r$ principal bundle.
An \textsl{$r$-spin surface} is a surface together with an $r$-spin structure.
We will often abbreviate $(P,p,\Sigma)$ by $\Sigma$.
A \textsl{morphism of $r$-spin surfaces} $\tilde{f}:(P,p,\Sigma)\to(P',p',\Sigma')$ is a bundle map $\tilde{f}:P\to P'$ with underlying map of surfaces $f$
such that the diagram
\begin{equation}
\begin{tikzcd}
P \rar{\tilde{f}} \dar[swap]{p} & P  \dar{p'} \\
F_{\Sigma} \rar{df_*}  & F_{\Sigma'} 
\end{tikzcd}\label{eq:def:morphism-of-r-spin-surfaces}
\end{equation}
commutes, where $df_*$ is the induced map from the derivative $df$ of $f$.
We call $\tilde{f}$ a \textsl{morphism of $r$-spin structures} if $f=\id_{\Sigma}$.

\begin{rmk}
	It is worth noting here that we here make use of $GL_2^+(\Rb)$ and its covers, rather than $SO(2)$ and its covers, discussed in the introduction. There is a correspondence between these two notions (c.f.\ e.g.\ \cite[Sec.\,I.5]{Dyckerhoff:2015csg}), but working with $SO(2)$ involves the additional choice of a metric on the surface. 
\end{rmk}

\medskip

In the following we will need the notion of $r$-spin collars and for this we introduce some more notation.
Consider $I\times\Rb$, which is contractible, hence every $r$-spin structure on it is isomorphic to the trivial one.
Now consider $\Cb^{\times}=\Rb^{2}\setminus\left\{ 0 \right\}$.

\begin{lemma}[{\cite[Sec.\,3.4]{Novak:2015phd}}]
	\label{lem:r-spin-R2}
	There is a bijection of sets
	\begin{align}
	\begin{aligned}
	\{\text{isomorphism classes of $r$-spin structures on $\Cb^{\times}$}\}
	\simeq \Zb/r\ .
	\end{aligned}
	\label{eq:r-spin-R2}
	\end{align}
\end{lemma}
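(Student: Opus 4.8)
The plan is to transport the question along the deformation retraction $\Cb^{\times}\simeq\Sb$ onto the unit circle, trivialize everything over an interval obtained by cutting $\Sb$, and then read off an $r$-spin structure as a single clutching element lying in $\ker p_{GL}^r\cong\Zb/r$.

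First I would note that restriction along the inclusion $\iota\colon\Sb\hookrightarrow\Cb^{\times}$ gives a bijection on isomorphism classes of $r$-spin structures. Indeed, an $r$-spin structure on a surface $\Sigma$ is the same datum as a lift of the classifying map $\Sigma\to BGL_2^+(\Rb)$ of the frame bundle along $Bp_{GL}^r\colon B\widetilde{GL}_2^r\to BGL_2^+(\Rb)$ (up to homotopy), and since $\iota$ is a homotopy equivalence it induces a bijection on homotopy classes of such lifts. Moreover, because $GL_2^+(\Rb)$ deformation retracts onto $SO(2)\cong\Sb$ via Gram--Schmidt, the bundle $F_{\Cb^{\times}}|_{\Sb}$ is classified by an element of $H^2(\Sb;\Zb)=0$, hence is trivializable; I would fix such a trivialization.

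Next I would cut $\Sb$ at a point to obtain a closed interval $J$, over which the frame bundle is trivial with trivial clutching datum. A principal $\widetilde{GL}_2^r$-bundle over the contractible $J$ is trivial, and by lifting a suitable path in $GL_2^+(\Rb)$ through $p_{GL}^r$ one may choose the trivialization of $P|_J$ so that $p|_J$ takes the standard form $(t,\tilde h)\mapsto(t,p_{GL}^r(\tilde h))$. In this normal form an $r$-spin structure on $\Cb^{\times}$ is determined by the clutching element $\tilde g\in\widetilde{GL}_2^r$ used to reglue $P|_J$ into $P|_{\Sb}$, and compatibility of $p$ with the trivial clutching of the frame bundle forces $p_{GL}^r(\tilde g)=\id$, i.e.\ $\tilde g\in\ker p_{GL}^r$. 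Since $p_{GL}^r$ is the $r$-fold cover (the universal cover when $r=0$) of the connected group $GL_2^+(\Rb)$, its kernel is central and isomorphic to $\Zb/r$, and every element of it is visibly realized by such a clutching, giving a surjection from $r$-spin structures onto $\Zb/r$.

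Finally I would check that $\tilde g$ is a well-defined and complete invariant, and this is the step I expect to be the main obstacle: one must isolate, among all the auxiliary trivialization choices, exactly the residual freedom and show it acts trivially on $\tilde g$. Re-normalizing the trivialization of $P|_J$ while keeping $p$ in standard form is effected by a map $J\to\ker p_{GL}^r$, which is constant by discreteness and central, hence fixes $\tilde g$; likewise an isomorphism of $r$-spin structures covers $\id_{\Cb^{\times}}$ and in particular the identity on the frame bundle, so over $J$ it is a constant central $\ker p_{GL}^r$-valued gauge transformation and again fixes $\tilde g$, while conversely equal clutching elements give isomorphic structures. This yields the desired bijection with $\ker p_{GL}^r\cong\Zb/r$ (which can be pinned down by letting $0$ correspond to the $r$-spin structure on $\Cb^{\times}$ extending over $\Rb^2$). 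As a cleaner but less explicit alternative one can run obstruction theory for the central extension $\Zb/r\to\widetilde{GL}_2^r\to GL_2^+(\Rb)$: the set of reductions of $F_{\Cb^{\times}}$ is nonempty because the obstruction lies in $H^2(\Cb^{\times};\Zb/r)\cong H^2(\Sb;\Zb/r)=0$, and it is then a torsor over $H^1(\Cb^{\times};\Zb/r)\cong H^1(\Sb;\Zb/r)\cong\Zb/r$.
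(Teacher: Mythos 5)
Your argument is correct and is essentially the standard clutching-function/holonomy argument that Novak gives in the cited thesis section; the paper itself does not reprove the lemma but simply points to that reference. Both of your formulations work: restricting to $\Sb$, cutting, normalizing the bundle map to the form $(t,\tilde h)\mapsto(t,p_{GL}^r(\tilde h))$ over the interval, and reading off the clutching element in $\ker p_{GL}^r\cong\Zb/r$ (with centrality and discreteness killing the residual gauge freedom, as you note), or equivalently the obstruction-theoretic statement that reductions along the central extension $\Zb/r\to\widetilde{GL}_2^r\to GL_2^+(\Rb)$ form a torsor over $H^1(\Cb^\times;\Zb/r)\cong\Zb/r$ once the $H^2$-obstruction vanishes. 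The one small point worth flagging is that the lemma as stated only asserts a bijection of sets, with no preferred basepoint; your final remark that one may normalize so that $0\in\Zb/r$ corresponds to the $r$-spin structure extending over $\Rb^2$ is a reasonable choice, but note that the paper (and \cite{Runkel:2018:rs}) actually adopts a shifted convention in which the extendable structure corresponds to $1\in\Zb/r$, as can be read off from the $+1$ in the admissibility condition \eqref{eq:intvertcondition} at interior vertices; this does not affect the validity of your proof, only the labeling.
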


Let us fix an $r$-spin structure $\Cb^{\kappa}$ on $\Cb^{\times}$,
whose isomorphism class corresponds to $\kappa\in\Zb/{r}$ via \eqref{eq:r-spin-R2}.
An \textsl{$r$-spin boundary parametrization} of an $r$-spin surface $\Sigma$ is:
\begin{enumerate}
	\item A boundary parametrization of the underlying surface $\Sigma$ as in Section~\ref{sec:ocbord}; 
	we use the same notation as in \eqref{eq:bdrparam}.
	\label{part:bdryparam}
	\item A pair of maps 
	\begin{align}
		x^\mathrm{in}:B_\mathrm{in}&\to\Zb/r\cup \left\{ * \right\}        & \text{and}&&x^\mathrm{out}:B_\mathrm{out}&\to\Zb/r\cup \left\{ * \right\}\ ,
	\label{eq:collarmaps}\\
	b&\mapsto x^\mathrm{in}_b&           &&          c&\mapsto x^\mathrm{out}_c\nonumber
	\end{align}
	such that $x^\mathrm{in}_b,x^\mathrm{out}_c\in\Zb/r$ for $b$, $c$ closed boundary components and
	$x^\mathrm{in}_b=*=x^\mathrm{out}_c$ for $b$, $c$ open boundary components.
	We will also use the shorthand notation $x=(x^\mathrm{in},x^\mathrm{out})$.
	\label{part:maps}
	\item A pair of morphisms of $r$-spin surfaces
	which parametrize the in- and outgoing boundary components by collars with $r$-spin structure,	
	\begin{align}
	\varphi_\mathrm{in}:\bigsqcup_{b\in B_\mathrm{in}}
	U_b^{x^\mathrm{in}_b}
	\rightarrow\Sigma\leftarrow
	\bigsqcup_{c\in B_\mathrm{out}}
	V_c^{x^\mathrm{out}_c}
	:\varphi_\mathrm{out} \ .
	\label{eq:rsbdrparam}
	\end{align} 
	Here, if $b$ is a closed boundary component, 
	$U_b^{x^\mathrm{in}_b}$ is the restriction of $\Cb^{x^\mathrm{in}_b}$ to the ingoing closed collar $U_b$. 
	If $b$ is an open boundary component, then $x^\mathrm{in}_b=*$ and $U_b^{*}$ is the restriction of the trivial $r$-spin structure to $U_b$.
	We define $V_c^{x^\mathrm{out}_c}$ analogously.
	The maps of surfaces underlying $\varphi_{\mathrm{in}/\mathrm{out}}$ 
	are required to be the maps $\phi_{\mathrm{in}/\mathrm{out}}$ in \eqref{eq:bdrparam} from 
	Part~\ref{part:bdryparam}.
\end{enumerate}
Note that the maps $x^\mathrm{in},x^\mathrm{out}$ in Part~\ref{part:maps} are not extra data.
They 
are uniquely determined by the restriction of the $r$-spin structure on $\Sigma$ 
to the collar neighborhoods of the boundary components and Lemma~\ref{lem:r-spin-R2}.

A \textsl{diffeomorphisms between $r$-spin surfaces with parametrized boundary}
$\tilde{f}:\Sigma\to\Sigma'$
is a diffeomorphism of $r$-spin surfaces which respects germs of the boundary parametrization.
By this we mean the following. Let $f_*:\pi_0(\Sigma)\to\pi_0(\Sigma')$ be the map induced by $f$.
Then for every $b\in B_\mathrm{in}$ there exists a collar $C_b$ contained in
both $U_b$ and $U'_{f_*(b)}$ such that 
\begin{align}
\begin{aligned}
	\tilde{f}\circ\varphi_\mathrm{in}|_{C^{x^\mathrm{in}_b}_{b}}=\varphi_\mathrm{in}'|_{C^{x^\mathrm{in}_{f_*(b)}}_{f_*(b)}}\ ,
\end{aligned}
\label{eq:morphism-of-r-spin-surf-bdry-param}
\end{align}
where $C^{x^\mathrm{in}_b}_{b}$ is defined as $U_b^{x^\mathrm{in}_b}$,
and the similar condition for outgoing boundary components holds.

\medskip

An \textsl{$r$-spin object} is a pair $(X,\rho)$ 
consisting of a finite set $X$ and a map 
$\rho : X \to \Zb/r \cup \left\{ * \right\}$, $x \mapsto \rho_x$. 
Below we construct a category with objects $r$-spin objects,
and whose morphisms we define now.

\begin{defn}
	Let $(X,\rho)$ and $(Y,\sigma)$ be two $r$-spin objects.
	An \textsl{open-closed $r$-spin bordism from $(X,\rho)$ to $(Y,\sigma)$} is a compact
	$r$-spin surface $\Sigma$ with boundary parametrization  as in \eqref{eq:rsbdrparam} together with bijections $\beta_\mathrm{in} : X \to B_\mathrm{in}$ and $\beta_\mathrm{out} : Y \to B_\mathrm{out}$ such that
	\begin{align} \label{eq:boundary-label-compatible}
	\begin{aligned}
	\raisebox{.4\totalheight}{
		\xymatrix{
			X \ar[rr]^{\beta_\mathrm{in}} \ar[dr]_\rho && 
			B_\mathrm{in} \ar[dl]^{x^\mathrm{in}} \\
			& \Zb/r \cup \left\{ * \right\}
	}}
	\quad \text{and} \quad
	\raisebox{.4\totalheight}{
		\xymatrix{
			Y \ar[rr]^{\beta_\mathrm{out}} \ar[dr]_\sigma && 
		B_\mathrm{out} \ar[dl]^{x^\mathrm{out}} \\
			& \Zb/r \cup \left\{ * \right\}
	}}
	\end{aligned}
	\end{align}
	commute.
	We will abbreviate an open-closed $r$-spin bordism $\Sigma$ from $(X,\rho)$ to $(Y,\sigma)$ as $\Sigma : \rho \to \sigma$. 
\end{defn}

Given open-closed $r$-spin bordisms $\Sigma : (X,\rho) \to (Y,\sigma)$ and $\Xi : (Y,\sigma) \to (Z,\tau)$, 
we define the \textsl{glued open-closed $r$-spin bordism} $\Xi \circ \Sigma : \rho \to \tau$
as follows. 
For every $y \in Y$, we glue the boundary component $\beta^\Sigma_\mathrm{out}(y)\in B_\mathrm{out}^{\Sigma}$ to the boundary component  $\beta^\Xi_\mathrm{in}(y)\in B_\mathrm{in}^{\Xi}$
using the $r$-spin boundary parametrizations $\varphi^\Sigma_\mathrm{out}$ and $\varphi^\Xi_\mathrm{in}$. 
The diagrams in~\eqref{eq:boundary-label-compatible} make sure that we glue together the restrictions of the same $r$-spin structure
on $\Cb^\times$ or on $I\times\Rb$.

Two open-closed $r$-spin bordisms between the same $r$-spin objects,
$\Sigma,\Sigma' : (X,\rho) \to (Y,\sigma)$ are called \textsl{equivalent} 
if there is a diffeomorphism $\tilde{f}: \Sigma \to \Sigma'$ of $r$-spin surfaces with boundary parametrization such that
\begin{align}
\begin{aligned}
\xymatrix @R=.6pc{
	& B_\mathrm{in}^{\Sigma} \ar[dd]^{f_*} \\
	X \ar[ru]^{\beta_{\mathrm{in}}} \ar[rd]_{\beta'_{\mathrm{in}}} \\
	& B_\mathrm{in}^{\Sigma'} 
}
\end{aligned}
\qquad \text{and} \qquad
\begin{aligned}
\xymatrix @R=.6pc{
	B_\mathrm{out}^{\Sigma} \ar[dd]_{f_*} \\
	& Y \ar[lu]_{\beta_{\mathrm{out}}} \ar[ld]^{\beta'_{\mathrm{out}}} \\
	B_\mathrm{out}^{\Sigma'}
}
\end{aligned}
\end{align}
commute. 
The composition of equivalence classes $[\Xi]:\sigma\to\tau$ and $[\Sigma]:\rho\to\sigma$ is $[\Xi\circ\Sigma]:\rho\to\tau$,
which is independent of the choice of representatives.

\begin{defn}
	The \textsl{category of open-closed $r$-spin bordisms} $\Bord{r}$ has $r$-spin objects
	as objects and equivalence classes of open-closed $r$-spin bordisms as morphisms.
\end{defn}

$\Bord{r}$ is a symmetric monoidal category with tensor product given by disjoint union.
The symmetric structure is given by cylinders with different boundary parametrizations.

	\section{Combinatorial models for \texorpdfstring{$r$}{r}-spin surfaces}\label{sec:combmod}

In this section we will briefly develop the relevant extensions of the models for $r$-spin surfaces appearing in 
\cite{Runkel:2018:rs}, \cite{Dyckerhoff:2015csg}, and \cite{Stern:2016stft}, as well as the relation between them. Throughout, we will maintain the convention established above, namely that a \textsl{surface} is an oriented 2-dimensional $\langle 2\rangle$-manifold. 

Given that the model of \cite{Dyckerhoff:2015csg,Stern:2016stft} is designed for use with punctures, rather than free boundary circles, we first briefly establish the relations between these two constructions.  
	Throughout, we will assume $0\le r<\infty$; only in Section~\ref{sec:str-graph} we exclude the case $r=0$ of the universal cover for notational convenience.

\begin{defn}
	Given a surface with boundary parametrization, we will call a component of the free boundary which does not meet the parameterized boundary a \textsl{free circle}.
	We call a surface with boundary parametrization \index{surface!whole}\textsl{whole} if it has no free circles. 
	
	A \textsl{punctured}\index{surface!punctured} surface $(S,M)$ is a surface $S$ together with a finite set $M\subset S$ of \textsl{punctures} such that $M\cap \partial S=\emptyset$. An \textsl{$r$-spin structure} on a punctured surface $(S,M)$ is an $r$-spin structure on $S\setminus M$. 
\end{defn}

The basic idea now is that two operations --- \textsl{drilling} and \textsl{patching} --- establish a bijection between equivalence classes of $r$-spin structures on different surfaces. We will briefly describe the constructions, and state the necessary facts about them. The relevant notions are treated in detail in \cite[Sec.\,2.3.1]{Stern:2019phd}. 

A \textsl{drilling} of a punctured surface $(S,M)$ is constructed by choosing an open ball around each puncture, such that the closures of the open balls are pairwise disjoint and disjoint from $\partial S$, and removing these open balls from the surface. This creates a new surface $\Sigma$ which has one free circle for every puncture $m\in M$. 

In the other direction, a \textsl{patching} of a surface $\Sigma$ with boundary parametrization is constructed by gluing an oriented disk with a puncture at its center into each free circle. It is worth noting here that $\Sigma$ is diffeomorphic to a drilling of a patching of $\Sigma$, and every whole surface $(S,M)$ is diffeomorphic to a patching of a drilling of $(S,M)$. 

\begin{ntt}
	We denote by $\on{rSpin}(S,M,\phi)_{x}$ the set of $r$-spin structures on $(S,M)$ with boundary parametrization $\phi$ and boundary labels $x=(x^\mathrm{in},x^\mathrm{out})$. We denote by $\on{rSpin}(\Sigma,\phi)_{x}$ the set of $r$-spin structures on $\Sigma$ with boundary parametrization covering $\phi$ and boundary labels $x$. 
	
	We will use the notations 
	\[
		\on{rSpin}(S,M,\phi)_{x}/_{\on{diffeo}}
	\] 
	and 
	\[
		\on{rSpin}(S,M,\phi)_{x}/_{\on{iso}}
	\]
	to denote equivalence classes under $r$-spin diffeomorphisms preserving boundary parametrization and $r$-spin diffeomorphisms over the identity, respectively. 
\end{ntt}

\begin{prop}
	Let $(S,M,\phi)$ be a whole punctured surface with boundary parametrization $\phi$, and let $x=(x^\mathrm{in},x^\mathrm{out})$ be boundary labels for $(S,M,\phi)$. Let $(\Sigma, \phi)$ be a drilling of $(S,M,\phi)$. Then the inclusion $\Sigma\hookrightarrow S$ induces  bijections 
	\[
		\on{rSpin}(S,M,\phi)_{x}/_{\on{diffeo}}\cong \on{rSpin}(\Sigma,\phi)_{x}/_{\on{diffeo}}
	\]
	and 
	\[
		\on{rSpin}(S,M,\phi)_{x}/_{\on{iso}}\cong \on{rSpin}(\Sigma,\phi)_{x}/_{\on{iso}}
	\]
	via the restriction of $r$-spin structures to submanifolds. 
\end{prop}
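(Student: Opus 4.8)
The plan is to reduce the statement to a local analysis near the free circles, exploiting the fact (noted just before the proposition) that $\Sigma$ is diffeomorphic to a drilling of a patching of $\Sigma$ and that $(S,M)$ is diffeomorphic to a patching of a drilling. First I would fix a drilling, i.e.\ a choice of pairwise disjoint closed balls $\overline{B}_m$ around the punctures $m\in M$, disjoint from $\partial S$, and write $\Sigma = S\setminus \bigcup_m \mathrm{int}(B_m)$, with $A_m = \overline{B}_m\setminus\{m\}$ the punctured disk glued to $S$ along the free circle $\partial B_m$. The restriction map $\on{rSpin}(S,M,\phi)_x \to \on{rSpin}(\Sigma,\phi)_x$ is clearly well-defined (restricting a bundle to a submanifold preserves all the collar data, since the $\overline{B}_m$ are disjoint from the parametrized boundary), and it is equally clear that it descends to both quotients, since a diffeomorphism respecting the collars can be chosen to preserve the $\overline{B}_m$ (or, more carefully, one isotopes it to do so; this is the kind of routine collar-straightening argument I would reference from \cite[Sec.\,2.3.1]{Stern:2019phd} rather than reprove).

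For surjectivity, the key input is Lemma~\ref{lem:r-spin-R2}: an $r$-spin structure on $\Sigma$ restricted to a collar of the free circle $\partial B_m$ determines an isomorphism class in $\Zb/r$, and by that lemma every class is realized by some $\Cb^\kappa$ on the punctured disk $A_m\cong\Cb^\times$. So given an $r$-spin structure on $\Sigma$, I would glue in the corresponding $\Cb^{\kappa_m}$ along each $\partial B_m$ — matching it to the given structure on the collar via an $r$-spin isomorphism, which exists precisely because the restricted isomorphism classes agree — to obtain an $r$-spin structure on $S\setminus M$ whose restriction to $\Sigma$ is (isomorphic to) the one we started with. This handles surjectivity onto $\on{rSpin}(\Sigma,\phi)_x/_{\on{iso}}$, hence also onto the coarser quotient $/_{\on{diffeo}}$.

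For injectivity, suppose two $r$-spin structures on $(S,M)$ restrict to isomorphic (resp.\ diffeomorphic) structures on $\Sigma$. I would extend the given $r$-spin isomorphism (resp.\ diffeomorphism) $\tilde f$ over the glued-in punctured disks $A_m$. The obstruction to extending over $A_m$ is exactly the comparison of the two $r$-spin structures on $A_m\cong\Cb^\times$; but both have the same restriction to the bounding collar (up to the given isomorphism on $\Sigma$), and again by Lemma~\ref{lem:r-spin-R2} an $r$-spin structure on $\Cb^\times$ is determined up to isomorphism by that collar data, so the two agree and $\tilde f$ extends. One must check that the extension can be taken compatibly with the boundary parametrization $\phi$ in the $/_{\on{diffeo}}$ case, but $\phi$ lives entirely on $\Sigma$ so this is automatic. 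The main subtlety — and the step I expect to require the most care — is the passage between the "over the identity" and "up to diffeomorphism" statements and making the gluing/extension genuinely canonical rather than just existing: one needs the space of $r$-spin isomorphisms of $\Cb^\kappa$ extending a fixed collar isomorphism to be connected (equivalently, that the relevant mapping class / automorphism group is controlled), so that the extensions glue without introducing new ambiguity. I would isolate this as a lemma about $r$-spin structures on the punctured disk, citing \cite{Novak:2015phd,Stern:2019phd}, and then the proposition follows by running the gluing argument on each $A_m$ independently.
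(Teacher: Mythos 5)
Your argument is essentially correct and is the natural one; the paper's ``proof'' is merely a one-line citation to \cite[Prop.\,2.1.14]{Stern:2019phd}, which \emph{mutatis mutandis} contains exactly this gluing/extension scheme (drill, classify the boundary holonomy via Lemma~\ref{lem:r-spin-R2}, glue back a standard $\Cb^\kappa$, extend isomorphisms over the punctured disks). So there is no genuine divergence of approach to report.

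One correction on the subtlety you flag at the end. You say one needs the space of $r$-spin isomorphisms of $\Cb^\kappa$ extending a fixed collar isomorphism to be \emph{connected}. That is not quite the right condition, and in fact cannot literally be what is needed: for $0<r<\infty$ the group of $r$-spin automorphisms of a connected $r$-spin surface covering the identity is the discrete group $\Zb/r$ (constant gauge transformations), so ``connected'' would mean ``a single point,'' which is false and also unnecessary. What the gluing actually requires is that the restriction homomorphism from $r$-spin automorphisms of $P_1|_{A_m}$ (over the identity) to $r$-spin automorphisms of $P_1|_{\mathrm{collar}}$ (over the identity) be \emph{surjective}. That holds because both groups are $\Zb/r$ and the restriction is an isomorphism, the collar inclusion being a homotopy equivalence. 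With that in hand, any isomorphism $g_m\colon P_1|_{A_m}\to P_2|_{A_m}$ (which exists since the classes in $\Zb/r$ agree by Lemma~\ref{lem:r-spin-R2}) can be corrected by an automorphism of $P_1|_{A_m}$ so that it agrees with $\tilde f$ on the collar, and then glues. The same observation also handles the passage to $/_{\on{diffeo}}$: after isotoping the diffeomorphism of $\Sigma$ to preserve the chosen balls $\overline{B}_m$ (it may permute the free circles, hence the punctures, which is fine), one runs the identical surjectivity-of-restriction argument fiberwise over each $A_m$.

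One more small point: in your surjectivity step, the restriction of the glued structure to $\Sigma$ can be arranged to be literally the original (not merely isomorphic to it) if the gluing is performed by identifying collar neighborhoods without modifying the structure over $\Sigma$ itself; ``isomorphic to'' is not wrong, but it is worth noting the cleaner statement holds.
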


\begin{proof}
	We prove the second statement first. Denote by $\rho_S: (S\setminus M) \to B\GL_2^+(\mathbb{R})$ the classifying map of the oriented frame bundle of $S\setminus M$, and similarly let $\rho_\Sigma: \Sigma \to B\GL_2^+(\mathbb{R})$ classify the oriented frame bundle of $\Sigma$. Then isomorphism classes of $r$-spin structures on $(S,M)$ are in bijection with homotopy classes of lifts 
	\[
	\begin{tikzcd}
	& B \widetilde{\GL}_2^r(\mathbb{R})\arrow[dd]\\
	S\setminus M\arrow[dr,"\rho_S"']\arrow[ur,dashed] & \\
	& B \GL_2^+(\mathbb{R}) 
	\end{tikzcd}
	\]
	and similarly for $\Sigma$. Since $\Sigma$ is a deformation retract of $S\setminus M$, this implies that there is a bijection between isomorphism classes of $r$-spin structures on $(S,M)$ and isomorphism classes of $r$-spin structures on $\Sigma$, induced by restriction. 
	
	For both $\Sigma$ and $S\setminus M$, the (germs of) boundary parameterizations are local data defined on collar neighborhoods $U_b\subset \Sigma \subset S\setminus M$. Thus, the bijection on isomorphism classes of $r$-spin structures descends to a bijection 
	\[
	\on{rSpin}(S,M,\phi)_{x}/_{\on{iso}}\cong \on{rSpin}(\Sigma,\phi)_{x}/_{\on{iso}}
	\]
	as desired.  
	
	The first statement follows from the second by applying \cite[Prop.\,2.1.14]{Stern:2019phd}. This tells us (1) that a boundary parameterization preserving $r$-spin diffeomorphism between elements of $\on{rSpin}(\Sigma,\phi)_x$ can be extended to a boundary parameterization preserving $r$-spin diffeomorphism between (representatives of) the corresponding elements of $\on{rSpin}(S,M,\phi)_x/_{\on{iso}}$; and (2) that a boundary parameterization preserving $r$-spin diffeomorphism between  elements of $\on{rSpin}(S,M,\phi)_x$ induces a boundary parameterization preserving $r$-spin diffeomorphism between the restrictions of these elements to $\Sigma$. 
\end{proof}

\begin{rmk}
	This bijection can be interpreted in terms of mapping class groups. If we consider a punctured surface $(S,M)$ with $m=|M|$, and $b$ boundary components, the mapping class groups fit into a short exact sequence
	\[
	\begin{tikzcd}
	0\arrow[r] & \mathbb{Z}^m\arrow[r] & \on{Mod}(\Sigma)\arrow[r] & \on{Mod}(S)\arrow[r] & 0 
	\end{tikzcd}
	\]
	where $\mathbb{Z}^m$ is generated by Dehn twists around the free circles. 
\end{rmk}

\subsection{Bordisms with marked PLCW decompositions}\label{subsec:PLCW}

This section will treat the first of our two combinatorial models for $r$-spin surfaces: the marked PLCW decompositions of \cite{Runkel:2018:rs}. This model applies for $0\leq r<\infty$. 

We begin by briefly recalling the PLCW decompositions of \cite{Kirillov:2012pl}. A slightly longer description than that given here is contained in 
\cite[Sec.\,2.2]{Runkel:2018:rs}.  

We consider the `linear closed ball' $B^n:=[-1,1]^n\subset \mathbb{R}^n$. An \textsl{$n$-cell} is the image of a piecewise linear (PL) map $\phi:B^n\to \mathbb{R}^m$ injective on the interior of $B^n$ (called a \textsl{characteristic map}). A \textsl{generalized cell decomposition} is a collection of cells in $\mathbb{R}^m$ with disjoint interiors, and such that the boundary of any cell is a union of cells. A \textsl{regular cell map} $f:K_\bullet\to L_\bullet$ is a piecewise linear map $f:\bigcup_{C\in K_\bullet}C\to \bigcup_{D\in L_\bullet} D $ such that, for every $C\in K_\bullet$ with characteristic map $\phi$, $f\circ \phi$ is the characteristic map of a cell in $L_\bullet$.

\begin{defn}
	A PLCW decomposition of dimension $n$ is a generalized cell decomposition $K_\bullet$ with at least one $n$-cell, and no cells of higher dimension, such that
	\begin{enumerate}
		\item the collection $(K_\bullet)^{<n}$ of all cells of $K_\bullet$ of dimension less than $n$ is a PLCW decomposition, and
		\item  for each cell $A\in K_n$, the induced map $\partial A\to (K_\bullet)^{<n}$ is a regular cell map.
	\end{enumerate}
Note that we can use smooth approximation to apply PLCW decompositions to smooth manifolds.
\end{defn}

For a punctured surface $(S,M)$ with boundary parametrization $\phi:\coprod U_i \to S$, we define the \textsl{core} $c(U_i)$ of each collar $U_i$ to be the subset $S^1\subset U_i$ if $U_i$ is a closed collar, and the subset $I\times\{0\}\subset U_i$ if $U_i$ is an open collar. By a \textsl{$\phi$-admissible PLCW decomposition} $(S,M)$, we mean a PLCW decomposition $K_\bullet$ of $S$ such that each puncture is a 0-cell of the decomposition, (i.e.\ $M\subset K_0$), and for each collar $U_i$, the restriction of $K_\bullet$ to the image of $c(U_i)\subset U_i\overset{\phi}{\to} S$ determines a PLCW decomposition of $c(U_i)$ with a single $1$-cell.

\begin{rmk}
	Note that our definition also implies that, for an open collar $U_i$, the endpoints of $c(U_i)$ are contained in $K_0$. 
\end{rmk}

\begin{defn}\label{defn:markedPLCW}
	Let $(S,M)$ be a punctured surface with boundary parametrization $\phi:\coprod U_i\to S$, and let $K_\bullet$ be a $\phi$-admissible PLCW decomposition of $S$. We denote by $\overline{K}_1\subset K_1$ the set of all  edges of $K_\bullet$  except boundary edges in the free boundary. 
		
	A \textsl{$\phi$-admissible marking of $K_\bullet$} consists of the following data: 
	\begin{itemize}
		\item An orientation of each edge in $\overline{K}_1$  (\textsl{edge orientations})
		\item A section $m:K_2\to E$ of the canonical map $E\to K_2$, where
		\[
		E:=\{(e,f)\mid e\in K_1, f\in K_2\text{ s.t.\ } e\text{ is an edge of }f\}
		\]
		(\textsl{edge markings}). 
		\item A map $s:\overline{K}_1\to \mathbb{Z}/r$, whose value on an edge $e$ we denote by $s_e$ (\textsl{edge indices}).
	\end{itemize} 
	If $S$ comes equipped with boundary labels $x^\mathrm{in}:B_{\on{in}}\to \mathbb{Z}/r$ and $x^\mathrm{out}: B_{\on{out}}\to \mathbb{Z}/r$, we additionally include the data of $x^\mathrm{in}_u$ or $x^\mathrm{out}_u$ assigned to the vertices of the corresponding boundary component as in \cite[Eq.\ 2.10]{Runkel:2018:rs}. 
\end{defn}

\begin{rmk}
	Note that Definition~\ref{defn:markedPLCW} can be applied to either a whole punctured surface $(S,M)$, or to a surface $\Sigma$ with parameterized boundary and no punctures.  
\end{rmk}

\begin{figure}[tb]
  \centering
  \tikzfig{graphics/ccw-vertex}
  \caption{Vertex $v$ in the counterclockwise direction with respect to the orientation of the surface. The orientation is represented by the ordered basis of the tangent space, drawn as 2 arrows marked 1 and 2.}
  \label{fig:ccw-vertex}
\end{figure}
Given a $\phi$-admissible marking of a $\phi$-admissible PLCW decomposition $K_\bullet$ of $(S,M)$, and a vertex $u$ in a boundary component, we set 
	\[
	\epsilon_u:=\begin{cases}
		+1 & u\in B_{\on{in}}\\
		-1 & u \in B_{\on{out}}
	\end{cases}
	\] 
For a vertex $v\in K_0$, we denote by $D_v$ the number of faces whose marked edge has $v$ as its boundary vertex in a counterclockwise direction (with respect to the orientation on $S$),
	see Figure~\ref{fig:ccw-vertex}. 
We denote $N^{\on{start}}_v$ the number of oriented edges starting at $v$, and by $N^{\on{end}}_v$ the number ending at $v$. We will write 
\[
N_v:=N^{\on{start}}_v+N^{\on{end}}_v
\]
We further denote by $\partial^{-1}(v)$ the set of edges whose boundary contains $v$. Note that we do not necessarily have $N_v = |\partial^{-1}(v)|$, as loops are allowed. For each $e\in \partial^{-1}(v)$ we denote 
\[
\hat{s}_e:=\begin{cases}
	-1 & e \text{ starts and ends at } v \text{ ($e$ is a loop)}\\
	s_e & e\text{ points out of } v\\
	-1-s_e & e\text{ points into } v.
\end{cases}
\] 

\begin{defn}
	\label{defn:AdmissiblePLCWSM}
	Let $(S,M)$ be a punctured surface with boundary parametrization $\phi:\coprod U_i \to S$. Given a $\phi$-admissible marking of a $\phi$-admissible PLCW decomposition $K_\bullet$ of $(S,M)$, let $v$ be a vertex in $K_0\setminus M$. If $v$ lies in the interior of $S$, we call  the marking \textsl{$v$-admissible} if 
	\begin{equation}\label{eq:intvertcondition}
	\sum_{e\in \partial^{-1}(v)} 	\hat{s}_e \equiv D_v -N_v+1 (\mod r). 
	\end{equation}
	If $v$ lies in the image of the core of a closed collar under $\phi$, we call the marking \textsl{$v$-admissible} if 
	\begin{equation}
		\sum_{e\in \partial^{-1}(v)} 	\hat{s}_e \equiv D_v -N_v+\epsilon_v\cdot x_v (\mod r)\ , 
	\end{equation}
	where $x_v$ is the value of $x=(x^\mathrm{in},x^\mathrm{out})$ on 
	the boundary component on which the vertex $v$ sits.
	We call the $\phi$-admissible marking on $K_\bullet$ \textsl{admissible} if it is $v$-admissible at every vertex fulfilling one of the two above descriptions. We denote the set of all admissible markings of  $K_\bullet$ by 
	\[
		\mathcal{M}^{K}_{x}(S,M).
	\]
	If $M=\emptyset$, we alter this notation to $\mathcal{M}^K_{x}(S)$.  
\end{defn}

As in \cite{Runkel:2018:rs}, admissible markings of PLCW decompositions on punctured surfaces correspond to $r$-spin structures. Before discussing this correspondence, however, we relate marked PLCW decompositions to the procedures of drilling and patching.

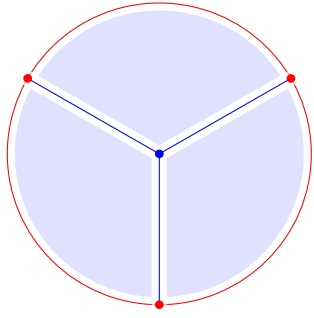
\begin{figure}[tb]
	\begin{center}
		\begin{tikzpicture}
		\path[fill=blue!40,opacity=0.3] (30:2)
		arc (30:150:2) -- (0,0)-- cycle;
		\path[fill=blue!40,opacity=0.3] (150:2)
		arc (150:270:2) -- (0,0)-- cycle;
		\path[fill=blue!40,opacity=0.3] (-90:2)
		arc (-90:30:2) -- (0,0)-- cycle;
		\draw[line width=2mm, draw=white] (0,0) circle (2);
		\draw[draw=red] (0,0) circle (2);
		\foreach \x/\lab in {30/a,150/b,270/c}{
			\path (\x:2) node (\lab) {};
			\draw[line width=2mm, draw=white] (0,0) to (\x:2);
			\draw[draw=blue] (0,0) to (\x:2);
			\draw[draw=red,fill=red] (\x:2) circle (0.05);
		};
		\path (0,0) node (ori) {}; 
		\draw[draw=blue,fill=blue] (0,0) circle (0.05); 
		\end{tikzpicture}
	\end{center}
	\caption{An extension (blue) of a PLCW decomposition of the circle (red) to the disk.}\label{fig:PLCWextension}
\end{figure}

\begin{lem}\label{lem:ExtPLCWfromDrill}
	Let $(S,M)$ be a whole surface with boundary parametrization $\phi: \coprod U_i\to S$. Let $\Sigma$ be a drilling of $(S,M)$, and let $K_\bullet$ be a $\phi$-admissible PLCW decomposition of $\Sigma$. Then there is a $\phi$-admissible PLCW decomposition $T_\bullet$ of $(S,M)$ which restricts to $K_\bullet$ on $\Sigma\subset S$. 
\end{lem}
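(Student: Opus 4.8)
The plan is to build $T_\bullet$ by filling the balls that were removed during drilling, coning off the induced cell decomposition of each new free circle from the corresponding puncture (this is exactly the extension pictured in Figure~\ref{fig:PLCWextension}). First I would fix the geometric set-up: a drilling $\Sigma$ of $(S,M)$ sits inside $S$ as $\Sigma = S\setminus\bigcup_{m\in M}B_m$ for suitable pairwise disjoint open balls $B_m$ around the punctures, so that $S=\Sigma\cup\bigcup_{m\in M}\overline{B_m}$, glued along the free circles $C_m:=\partial B_m$, with the puncture $m$ the center of $\overline{B_m}$. Since $K_\bullet$ is a PLCW decomposition of the surface-with-boundary $\Sigma$, its boundary is a subcomplex; in particular $K_\bullet$ restricts to a PLCW decomposition $L^m_\bullet$ of each $C_m$, consisting of $k_m\ge 1$ vertices and $k_m$ edges (one vertex and one loop edge when $k_m=1$), and the free circles of $\Sigma$ are precisely the $C_m$ because $(S,M)$ is whole.

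Next I would perform the cone construction on each $\overline{B_m}$: add the puncture $m$ as a $0$-cell at the center, add one radial $1$-cell from $m$ to each $0$-cell of $L^m_\bullet$, and for each $1$-cell of $L^m_\bullet$ add one $2$-cell, realized as a PL ``pizza slice'' of $\overline{B_m}$. Let $T_\bullet$ be the union of $K_\bullet$ with all of these new cells. I would then check $T_\bullet$ is a PLCW decomposition of $S$: the cells have pairwise disjoint interiors and the boundary of each is a union of cells, so $T_\bullet$ is a generalized cell decomposition of dimension $2$; its $1$-skeleton is a graph, hence a PLCW decomposition of dimension $1$; and for each $2$-cell the attaching map is a regular cell map --- inherited from $K_\bullet$ for the old $2$-cells, and true by construction for each new pizza slice, whose boundary traverses one radial edge, one edge of $C_m$, and then the other (respectively, the same) radial edge. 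Since every genuinely new cell has its interior contained in $B_m=S\setminus\Sigma$, the decomposition $T_\bullet$ restricts to $K_\bullet$ on $\Sigma\subset S$.

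Finally I would verify $\phi$-admissibility of $T_\bullet$ for $(S,M)$. By construction each puncture is a $0$-cell of $T_\bullet$. Drilling and patching only modify neighborhoods of the punctures, which are disjoint from the images of the collar cores $c(U_i)$ of the boundary parametrization $\phi$; hence the restriction of $T_\bullet$ to each $c(U_i)$ coincides with that of $K_\bullet$, and therefore is a PLCW decomposition with a single $1$-cell (and contains the endpoints of any open collar core among its $0$-cells) because $K_\bullet$ is $\phi$-admissible. This establishes all the required properties.

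The only genuinely delicate step is confirming that the cone construction yields a bona fide PLCW decomposition in the degenerate case where some $C_m$ carries a single vertex and a single loop edge: there the new $2$-cell is attached along a map whose restriction to $\partial B^2$ is not injective on the cells of a cellulation of $\partial B^2$, since the radial edge is traversed twice. This map is nonetheless a regular cell map, and it is precisely the extra flexibility of PLCW decompositions over ordinary CW complexes (as in \cite{Kirillov:2012pl}) that makes it admissible; equivalently, the interior of such a $2$-cell is an open disk slit along a radius, still homeomorphic to an open disk, so a valid characteristic map exists. Should one wish to avoid this point entirely, one could instead choose the balls $B_m$ together with an auxiliary PL subdivision of $\overline{B_m}$ having at least three vertices on $C_m$ that refines $L^m_\bullet$ --- but this would force a refinement of $K_\bullet$ near $C_m$, violating the requirement that $T_\bullet$ restrict to $K_\bullet$, so the flexible PLCW viewpoint is the right one here.
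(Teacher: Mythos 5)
Your proof is correct and follows the same route as the paper: cone off the induced PLCW decomposition of each free circle from the corresponding puncture, exactly as in Figure~\ref{fig:PLCWextension}, and observe that the collar cores are untouched. You give more detail than the paper's ``this is straightforward,'' in particular flagging the degenerate single-loop case and correctly noting that this is where the PLCW flexibility of \cite{Kirillov:2012pl} (characteristic maps injective only on interiors) is genuinely needed --- a point the paper's proof leaves implicit.
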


\begin{proof}
	Let $U_m\subset S$ be the disk about $m$ whose removal defines $\Sigma$. The PLCW decomposition $K_\bullet$ determines a PLCW decomposition of $\partial U_m$ by restriction to a free boundary component. Consequently, the lemma reduces to showing that, for any PLCW decomposition $J_\bullet$ of $S^1=\partial D$, there is a PLCW decomposition $L_\bullet$ of $D$ extending $J_\bullet$ which has $\{0\}\in D$ as one of its $0$-cells.  This is straightforward, as we may take cells 
	\begin{align*}
	L_0&:= J_0\amalg \{0\},\\
	L_1&:= J_1\amalg J_0\times \{0\}\\
	L_2&:= J_1.  
	\end{align*}
	as shown in 
	Figure~\ref{fig:PLCWextension}.
\end{proof}

\begin{defn}[Fixed PLCW moves]\label{defn:fixmoves}
	Given a $\phi$-admissible PLCW decomposition $K_\bullet$ of punctured surface $(S,M)$, and an admissible marking $(m,o,s)$ of $K_\bullet$,\footnote{Here $m$ denotes the edge markings, $o$ the orientations, and $s$ the edge indices.} we call the following operations the \textsl{fixed PLCW moves}:
	\begin{enumerate}
		\item Reversing the orientation of an edge. This changes the edge labels as in 
			Figure~\ref{fig:PLCWfix} (1).
		\item Moving the marked edge of a face counterclockwise. This changes the edge labels as in 
			Figures~\ref{fig:PLCWfix} (2a) and (2b).
		\item Adding $\pm k$ to all edge labels, as in 
			Figure~\ref{fig:PLCWfix} (3). 
			We call this a \textsl{deck transformation}.
	\end{enumerate} 
	The fixed PLCW moves generate an equivalence relation 
	on $\mathcal{M}^{K}_{x}(S,M)$
	which we will denote by $\sim_{\on{fix}}$.
\end{defn}

\begin{figure}[tb]
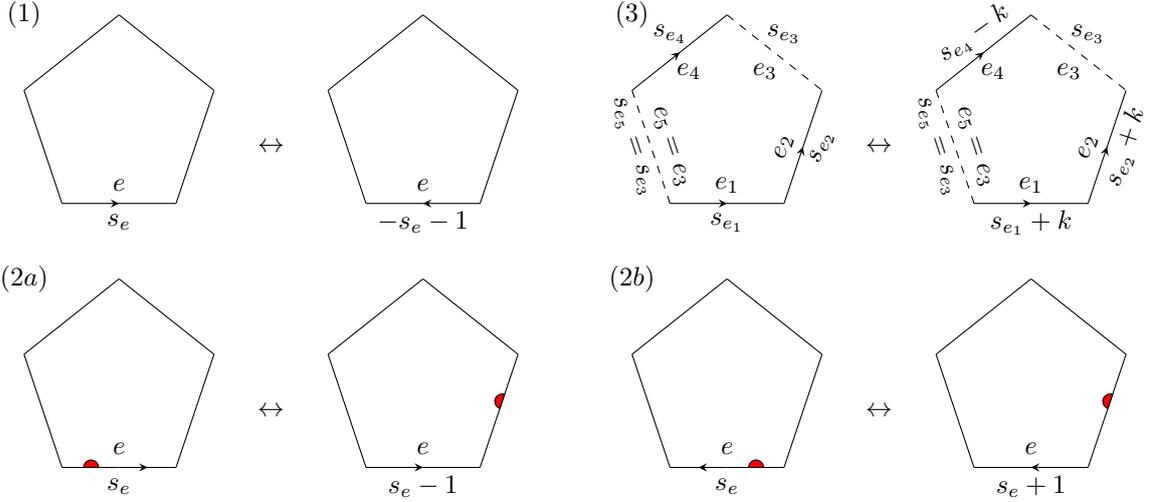

	\begin{center}
		\tikzfig{fixmoves}
	\end{center}
	\caption{The fixed PLCW moves.}\label{fig:PLCWfix}
\end{figure}

\begin{rmk}
	Note that, while the edge labels will determine transition functions for an $r$-spin bundle, they take values in a $\Zb/r$-torsor, rather than in $\widetilde{GL}_2^r(\Rb)$ itself. It is the relation between $r$-spin transition functions and $\Zb/r$-torsors which yields the relations in Figure~\ref{fig:PLCWfix}. See \cite{Novak:2015phd} for further details. 
\end{rmk}

\begin{prop}\label{prop:PLCWopenclosedisRSpin}
	Let $(S,M)$ be a punctured surface, $\phi$ a boundary parametrization of $(S,M)$, and $x=(x^\mathrm{in},x^\mathrm{out})$ boundary labels. Let $K_\bullet$ be a $\phi$-admissible PLCW decomposition of $(S,M)$. There is a bijection 
	\[
		\on{rSpin}(S,M,\phi)_{x}/_{\on{iso}}\cong \mathcal{M}^K_{x}(S,M)/_{\sim_{\on{fix}}}.
	\]
\end{prop}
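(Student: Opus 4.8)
The plan is to reduce this to the closed-case statement, which is exactly the content of \cite[Thm.\ 3.9 or similar]{Runkel:2018:rs}, by using the drilling/patching correspondence together with the preceding Lemma~\ref{lem:ExtPLCWfromDrill}. The correspondence $\on{rSpin}(S,M,\phi)_x/_{\on{iso}} \cong \mathcal M^K_x(S,M)/_{\sim_{\on{fix}}}$ is set up so that the $\Zb/r$-labels on edges encode transition functions between copies of the trivial $r$-spin structure over the $2$-cells, and admissibility at a vertex is precisely the obstruction to extending over that $0$-cell. So the core of the proof is bookkeeping: showing the assignment is well-defined, surjective, and injective.

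First I would construct the map $\mathcal M^K_x(S,M) \to \on{rSpin}(S,M,\phi)_x/_{\on{iso}}$. Given an admissible marking, equip each $2$-cell with the trivial $r$-spin structure; over each edge $e\in\overline K_1$, the label $s_e$ (read in the canonical direction determined by the edge orientation and the orientation of $S$) specifies a $\Zb/r$-torsor isomorphism gluing the two incident sheets. One checks this glues to a well-defined $r$-spin structure over the $1$-skeleton, and then the vertex-admissibility conditions \eqref{eq:intvertcondition} and the collar version guarantee the structure extends over each $0$-cell in $K_0\setminus M$; near the punctures and boundary collars one uses Lemma~\ref{lem:r-spin-R2} and the collar labels $x$ to pin down the correct isomorphism class $\Cb^{x_v}$, which is where the term $\epsilon_v\cdot x_v$ enters. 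That the isomorphism class of the result depends only on the $\sim_{\on{fix}}$-class is the content of the Remark following Definition~\ref{defn:fixmoves}: each fixed PLCW move (edge reversal, marked-edge rotation, deck transformation) corresponds to changing the local trivializations, not the bundle, so the induced relabelings in Figure~\ref{fig:PLCWfix} leave the isomorphism class fixed.

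For surjectivity, given an $r$-spin structure $P\to S\setminus M$, restrict to a $\phi$-admissible PLCW decomposition (which exists — pass via Lemma~\ref{lem:ExtPLCWfromDrill} if necessary, starting from a decomposition of a drilling). Over each contractible $2$-cell choose a trivialization; comparing trivializations across each edge produces a $\Zb/r$-valued label, and choosing a marked edge per face fixes the ambiguity in how the comparison is recorded. The resulting marking is automatically admissible because the given bundle \emph{does} extend over every $0$-cell. For injectivity, suppose two admissible markings yield isomorphic $r$-spin structures; the isomorphism, compared against the chosen trivializations over the $2$-cells, is recorded cell-by-cell as exactly a sequence of changes of trivialization, and one checks each such elementary change is generated by the fixed PLCW moves. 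The main obstacle is the injectivity step: one must verify carefully that \emph{every} $r$-spin isomorphism between the two glued bundles can be decomposed into the three basic moves of Definition~\ref{defn:fixmoves} — in particular that no further global relation is needed — which amounts to a connectedness/spanning-tree argument on the dual graph combined with the torsor computations of \cite{Novak:2015phd}. Since all of these checks are the open-closed analogues of arguments already carried out in \cite{Runkel:2018:rs} for the closed case, I expect the proof to largely consist of pointing to \emph{loc.\ cit.} and tracking the additional bookkeeping at the boundary collars.
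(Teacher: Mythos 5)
Your approach is essentially the same as the paper's: the paper simply cites the proof of \cite[Thm.\ 2.13]{Runkel:2018:rs} (given in Appendix A.4 of that reference) as applying \emph{mutatis mutandis}, and then records the two differences one must track, exactly the kind of ``bookkeeping'' you anticipate at the end of your sketch. Two small corrections to your framing, though. First, the paper does \emph{not} route through drilling/patching here: punctures are handled directly, by observing that the admissibility condition \eqref{eq:intvertcondition} is simply not imposed at the points of $M$, so the construction of \cite[Sec.\,4.8]{Novak:2015phd} produces a bundle on $S\setminus M$ rather than on $S$ --- which is precisely what one wants. Your phrasing ``near the punctures and boundary collars one uses Lemma~\ref{lem:r-spin-R2} and the collar labels $x$ to pin down the correct isomorphism class'' conflates the two: the collar labels $x$ only constrain the boundary circles; at a puncture there is no label and no constraint, and the monodromy around it is whatever it is. Second, the paper's other explicit remark is about free boundary edges: since such an edge abuts only one $2$-cell, it carries no orientation, no index, and no transition function at all, and the open parametrized boundary edges carry an index specifying the transition to the trivialized collar. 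Your sketch doesn't mention the free-boundary case, which is one of the two things the paper flags as needing attention. Neither omission is a gap in the argument --- your construction/surjectivity/injectivity outline is sound and matches the strategy of Runkel--Szegedy --- but the paper's proof is much terser and is organized around precisely those two deviations from the closed case.
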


\begin{proof}
	The proof is
		analogous to
	that of \cite[Thm.\,2.13]{Runkel:2018:rs}, presented in Appendix A.4 of \textsl{loc.\ cit.} While we do not replicate the 
		complete 
	proof here, we comment on the two differences to be observed. 

	The construction of \cite[Sec.\,4.8]{Novak:2015phd} gives an $r$-spin structure on $S\setminus K_0$ --- the surface minus the vertices of the PLCW decomposition.
	By requiring that for every inner vertex which is not a puncture the admissibility condition \eqref{eq:intvertcondition} holds, this $r$-spin structure extends to these vertices,
	and we obtain an $r$-spin structure on $S\setminus M$.
	Note that we do not require the $r$-spin structure to extend to puncures.

	Secondly, when dealing with free boundary components, the boundary edges of $K_\bullet$ lying outside the parameterized boundary carry no additional data (orientations 
	  or
      	indices), and the boundary vertices no compatibility conditions. However, since these edges abut only one 2-cell (on only one side), we do not need the data of transition functions on these edges. Each parameterized open boundary component $\psi: U\to S$ has edge labels on its core, which specify the transition function between the trivialized $r$-spin structure on $U$ and the trivialized $r$-spin structure on the 2-cell of $K_\bullet$ abutting the image of the core of $U$. 

	Just as in \cite[Sec.\,4.8]{Novak:2015phd}, the moves which generate the equivalence relation induce isomorphisms of $r$-spin structures.

	Conversely, obtaining a marking of the PLCW decomposition from an $r$-spin structure works via the same construction as in \cite[Sec.\,4.8]{Novak:2015phd}.
	We note here that we only record the marking for inner and boundary edges, for the same reason that we explained above.
\end{proof}

\subsubsection{Changing PLCW decompositions}	

\begin{defn}\label{defn:elementarymoves}
	Given a punctured surface $(S,M)$  with boundary parametrization $\phi$, boundary labels $x^\mathrm{in},x^\mathrm{out}$, a $\phi$-admissible PLCW decomposition $K_\bullet$, and an admissible marking $(m,o,s)$ of $K_\bullet$, the \textsl{elementary PLCW moves} are the operations 
	\begin{itemize}
		\item[4a.] adding or removing a vertex as in 
			Figure~\ref{fig:ElemMoves} (a), 
		\item[4b.] adding or removing an edge as in 
			Figure~\ref{fig:ElemMoves} (b),
	\end{itemize}
	subject to the conditions that the endpoints of parameterized boundary intervals and the punctures can be neither removed or added.
\end{defn}

\begin{figure}[tb]
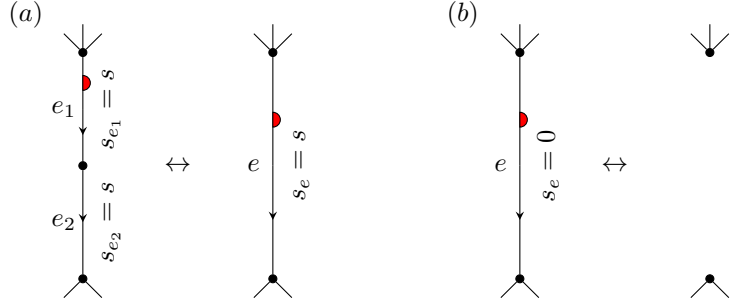

	\begin{center}
		\tikzfig{edgemove}
	\end{center}
	\caption{The elementary moves on a marked PLCW decomposition.}\label{fig:ElemMoves}
\end{figure}

\begin{prop}
	The elementary PLCW moves induce isomorphisms of $r$-spin structures on $(S,M)$. 
\end{prop}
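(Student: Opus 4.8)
The plan is to reduce the statement to the local pictures in Figure~\ref{fig:ElemMoves} and verify that, in each case, the marked PLCW decompositions on either side of the move encode the same $r$-spin structure up to isomorphism. By Proposition~\ref{prop:PLCWopenclosedisRSpin}, an admissible marking of a $\phi$-admissible PLCW decomposition $K_\bullet$ of $(S,M)$ determines a class in $\on{rSpin}(S,M,\phi)_x/_{\on{iso}}$, and the claim is that the two admissible markings related by an elementary move (4a) or (4b) produce $r$-spin isomorphic structures. Since an elementary move changes $K_\bullet$ only inside a small disk $D$ (a neighborhood of the added/removed vertex, or of the pair of 2-cells sharing the added/removed edge), and leaves $K_\bullet$ untouched outside $D$, it suffices to check that the restriction of the constructed $r$-spin structure to a slightly larger disk $D'\supset \overline{D}$ is unchanged up to isomorphism rel $\partial D'$ — the $r$-spin structure on the complement $S\setminus D$, and the transition data gluing it to $D'$, are literally the same on both sides.

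The key steps, in order: first, I would observe that before the move the local marking is required to be admissible (it satisfies the vertex conditions of Definition~\ref{defn:AdmissiblePLCWSM} at every interior or closed-collar vertex involved), and that the edge-label assignments on the new cells displayed in Figure~\ref{fig:ElemMoves} are precisely those forced by admissibility at the new vertex (move 4a) or by the bookkeeping of $\hat s_e$, $D_v$, $N_v$ at the two endpoints of the new edge (move 4b); this is a finite computation using \eqref{eq:intvertcondition}. Second, following \cite[Sec.\,4.8]{Novak:2015phd} and \cite{Runkel:2018:rs}, I would recall that the $r$-spin structure is built by trivializing over each 2-cell and using the edge labels as transition functions; adding a vertex in the interior of an edge (4a) subdivides a 1-cell into two but does not introduce any new 2-cell or new transition, so the pasted-together bundle over $D'$ is canonically isomorphic to the old one, and similarly removing/adding an edge (4b) replaces two trivialized 2-cells glued along a new edge with one trivialized 2-cell (or vice versa), and the prescribed labels are exactly those making the two presentations give isomorphic bundles over $D'$. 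Third, I would note that this local isomorphism over $D'$ restricts to the identity near $\partial D'$ (because the labels on the boundary edges of $D'$ are unchanged), hence glues with the identity on $S\setminus D$ to a global $r$-spin isomorphism over $\id_S$; this is exactly the claim. I would also remark that the case analysis must be run separately for a vertex/edge lying in the interior of $S$, on the core of a closed collar, and adjacent to free boundary, but in each case the argument is identical to the corresponding part of the proof of Proposition~\ref{prop:PLCWopenclosedisRSpin}.

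The main obstacle I expect is purely combinatorial: correctly matching the label changes drawn in Figure~\ref{fig:ElemMoves} with the admissibility conditions, keeping careful track of edge orientations and of the marked-edge counts $D_v$ when a 2-cell gains or loses a boundary edge, and checking that admissibility at all affected vertices is preserved by the move (so that the result is again an admissible marking, not merely a marking). Once that local consistency check is done, the globalization is formal — it is just the statement that an isomorphism of principal bundles supported in a disk, restricting to the identity on the boundary, extends by the identity. I would therefore spend the bulk of the written proof on the two local cases (a) and (b), and dispatch the gluing in a sentence, exactly as in the proof of \cite[Thm.\,2.13]{Runkel:2018:rs}.
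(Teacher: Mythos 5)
Your proposal is correct and matches the approach the paper implicitly takes: the paper dispatches this by citing \cite[Prop.\,2.16]{Runkel:2018:rs}, and your local-to-global argument (check the label changes in Figure~\ref{fig:ElemMoves} preserve admissibility, verify the transition-function presentation over the affected disk is unchanged up to isomorphism rel boundary, then glue with the identity outside) is precisely the content of that cited proposition.
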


\begin{proof}
	This follows from the same reasoning as \cite[Prop.\,2.18]{Runkel:2018:rs}.
\end{proof}

\begin{prop}
	Let $(S,M)$ be a whole surface with boundary parametrization $\phi$ and boundary labels $x=(x^\mathrm{in},x^\mathrm{out})$, and let $\Sigma$ be a drilling of $(S,M)$. Fix a $\phi$-admissible PLCW decomposition $K_\bullet$ of $\Sigma$ and let $T_\bullet$ be the extension of $K_\bullet$ constructed in 
	Lemma~\ref{lem:ExtPLCWfromDrill}. 
	Then there is a bijection
	\[
		\mathcal{M}^{T}_{x}(S,M)/_{\sim_{\on{fix}}}\cong \mathcal{M}^{K}_{x}(\Sigma)/_{\sim_{\on{fix}}}.
	\]
\end{prop}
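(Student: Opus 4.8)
The plan is to show that restriction to the drilling $\Sigma\subset S$ induces the claimed bijection, with inverse given by the extension procedure of Lemma~\ref{lem:ExtPLCWfromDrill}. Recall that $T_\bullet$ differs from $K_\bullet$ only in the following way: for each puncture $m\in M$, the decomposition $K_\bullet$ has a free circle boundary (the image of $\partial U_m$), and $T_\bullet$ fills this circle with one new $0$-cell (the puncture $m$ itself), one new $1$-cell per vertex of $\partial U_m$, and one new $2$-cell per edge of $\partial U_m$, as in Figure~\ref{fig:PLCWextension}. So the edge set $\overline{T}_1$ contains $\overline{K}_1$ together with the new ``radial'' edges, while the old free-circle edges of $\partial U_m$ — which carried no data in $K_\bullet$ because they lay in the free boundary — now lie in the interior of $S$ and \emph{do} carry data in $T_\bullet$. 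The first step is therefore to set up the comparison map: given an admissible marking of $K_\bullet$, I extend it to a marking of $T_\bullet$ by keeping all data on $\overline{K}_1$, choosing arbitrary orientations on the new radial and circle edges, choosing edge markings for the new $2$-cells, and then \emph{solving} the vertex-admissibility equation \eqref{eq:intvertcondition} at the new vertices on $\partial U_m$ for the new edge indices. The key point is that each new circle vertex has exactly one new radial edge incident to it, so the admissibility equation at that vertex determines that edge's index uniquely in $\Zb/r$ (given all other choices); proceeding around the circle, all new indices are forced, and no admissibility condition is imposed at the puncture $m$ itself. This shows the extension exists; different auxiliary choices are related by fixed PLCW moves (Definition~\ref{defn:fixmoves}) supported on the new cells, so the map on $\sim_{\on{fix}}$-classes is well-defined.

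Next I show the map is well-defined on equivalence classes in the other variable too: if two markings of $K_\bullet$ are related by a fixed PLCW move, their extensions to $T_\bullet$ are related by the same move (possibly followed by corrective moves on the new cells, as in the previous paragraph, to restore the forced values of the new indices). Restriction $\mathcal{M}^T_x(S,M)\to \mathcal{M}^K_x(\Sigma)$ is visibly well-defined on $\sim_{\on{fix}}$-classes since the fixed moves on $T_\bullet$ either are supported away from the new cells (hence restrict to fixed moves on $K_\bullet$) or are supported on the new cells (hence restrict trivially), with the deck transformation being the one global move that acts the same way on both. That restriction followed by extension is the identity up to $\sim_{\on{fix}}$ is clear: re-extending recovers the same data on $\overline{K}_1$ and the forced values on the new edges, and the admissibility equations at the new vertices in the original $T_\bullet$-marking coincide with the ones I solved, so the two $T_\bullet$-markings agree on the new indices up to the fixed moves used to normalize auxiliary choices.

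The substantive step is the reverse composite: starting from an admissible marking of $T_\bullet$, restrict to $K_\bullet$, then extend; I must check the result is $\sim_{\on{fix}}$-equivalent to the original. On $\overline{K}_1$ nothing changed. The issue is whether the \emph{original} data on the new cells of $T_\bullet$ is recovered up to fixed moves. Here the mechanism above applies: in any admissible $T_\bullet$-marking, the admissibility equation \eqref{eq:intvertcondition} at each new circle vertex already expresses the incident new radial index in terms of the remaining data, so the original new indices are exactly what the extension procedure would produce, once auxiliary choices (orientations and edge markings of new cells) are aligned by fixed moves. The remaining point — and this is where I expect the main obstacle — is a global consistency check: going once around a free circle $\partial U_m$, the successive vertex-admissibility equations must be mutually consistent, i.e.\ the ``holonomy'' of solving for new indices around the loop must close up. This is where the hypothesis that $T_\bullet$ is itself admissible (so \eqref{eq:intvertcondition} holds at \emph{all} the relevant vertices simultaneously, and crucially that \emph{no} condition is imposed at $m$, absorbing the total monodromy) is used; the cleanest way to see the closure is to invoke Proposition~\ref{prop:PLCWopenclosedisRSpin} on both $(S,M)$ and $\Sigma$ together with the restriction bijection $\on{rSpin}(S,M,\phi)_x/_{\on{iso}}\cong\on{rSpin}(\Sigma,\phi)_x/_{\on{iso}}$ of the earlier Proposition, so that the desired bijection is the composite
\[
\mathcal{M}^T_x(S,M)/_{\sim_{\on{fix}}}\ \cong\ \on{rSpin}(S,M,\phi)_x/_{\on{iso}}\ \cong\ \on{rSpin}(\Sigma,\phi)_x/_{\on{iso}}\ \cong\ \mathcal{M}^K_x(\Sigma)/_{\sim_{\on{fix}}},
\]
and one checks that this composite agrees with restriction of markings (which it does, since restriction of $r$-spin structures corresponds to restriction of transition-function data under the correspondence of Proposition~\ref{prop:PLCWopenclosedisRSpin}). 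This reduces the statement to facts already established, and sidesteps the bookkeeping of the direct argument; I would present the direct extension/restriction description as the explicit inverse and use the $r$-spin correspondence to certify it.
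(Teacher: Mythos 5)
Your plan does prove the proposition, but it follows a different route than the paper and locates a difficulty where none exists. The paper's proof is direct: it constructs $\phi:\mathcal{M}^K_x(\Sigma)\to\mathcal{M}^T_x(S,M)$ by extending a given marking of $K_\bullet$ over the new cells with a fixed normalization, constructs $\psi:\mathcal{M}^T_x(S,M)\to\mathcal{M}^K_x(\Sigma)$ by forgetting data on the new cells, and then checks $\psi\circ\phi=\on{id}$ on the nose and $\phi\circ\psi\sim_{\on{fix}}\on{id}$, without ever passing through $r$-spin structures.

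The ``global consistency check'' you anticipate around each free circle $\partial U_m$ is not actually an obstacle. In the extension $T_\bullet$ (cf.\ Lemma~\ref{lem:ExtPLCWfromDrill} and Figure~\ref{fig:PLCWextension}), the new cells around a puncture are one new vertex at $m$, the circle edges $J_1\subset \overline{T}_1$ (with $J_1\cap\overline{K}_1=\emptyset$), and one radial edge per circle vertex. After fixing orientations and markings for the new cells and setting the indices on all of $J_1$ to $0$ --- the paper's normalization --- the admissibility equation \eqref{eq:intvertcondition} at each circle vertex becomes a single equation in a single unknown, namely the index of the unique radial edge at that vertex, with no coupling between distinct circle vertices. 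There is no cycle of constraints, hence no holonomy to close up: the equations decouple, the radial indices are forced, and the absence of any condition at $m$ itself leaves nothing further to verify. Having expected an obstruction, you instead reach for the chain
\[
\mathcal{M}^T_x(S,M)/_{\sim_{\on{fix}}}\ \cong\ \on{rSpin}(S,M,\phi)_x/_{\on{iso}}\ \cong\ \on{rSpin}(\Sigma,\phi)_x/_{\on{iso}}\ \cong\ \mathcal{M}^K_x(\Sigma)/_{\sim_{\on{fix}}}
\]
as a certificate. This chain does give a bijection, hence the proposition, so your argument is logically valid. But it buys less than it seems: the further claim that this composite agrees with restriction of markings is itself a nontrivial naturality statement about the construction behind Proposition~\ref{prop:PLCWopenclosedisRSpin}, which you assert without proof, and which the proposition does not even require. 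Once you notice the equations decouple, the direct argument is both shorter and more useful: after moves (1) and (2) align orientations and markings on the new cells, and deck transformations bring the $J_1$ indices to $0$, admissibility forces the radial indices, giving $\phi\circ\psi\sim_{\on{fix}}\on{id}$ explicitly. That is the paper's route.
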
 

\begin{proof}
	As in 
	Lemma~\ref{lem:ExtPLCWfromDrill} 
	it will suffice to work locally on a disk. We will denote by $J_\bullet$ the PLCW decomposition of the circle (free boundary component) obtained from drilling at $m\in M$. We will denote by $L_\bullet$ the PLCW decomposition of the disk, as in 
	Lemma~\ref{lem:ExtPLCWfromDrill}. 
	Note that $J_1\subset L_1\subset \overline{T}_1$, but $J_1\cap \overline{K}_1=\emptyset$. 
	
	Suppose given an admissible marking of $K_\bullet$. We choose edge orientations on $J_1$ to move counterclockwise around the boundary circle (with respect to the induced orientation from $S$) and orientations on $L_1\setminus J_1$ to point into the puncture $m$. For each element of $L_2$, we mark the side lying on the circle. We then set all edge indices for $J_1$ to be $0$.\footnote{It is worth pointing out that these choices of orientations, markings and edge indices are arbitrary, and merely made to give a convenient definition of a map.} These choices, together with 
	\eqref{eq:intvertcondition}, 
	then uniquely determine the edge labels for the edges of $L_1\setminus J_1$. Moreover, since there is no compatibility condition at the marked vertex $m$, these choices determine an admissible marking of $T_\bullet$ extending the marking of $K_\bullet$, and thus, a map of sets 
	\[
		\phi:\mathcal{M}^{K}_{x}(\Sigma)\to \mathcal{M}^{T}_{x}(S,M).
	\]
	
	In the other direction, given an admissible marking of $T_\bullet$ in $\mathcal{M}^{T}_{x}(S,M)$, we simply forget those edge orientations, edge indices, and edge labels not needed for a marking of $K_\bullet$, determining a map of sets 
	\[
		\psi: \mathcal{M}^{T}_{x}(S,M)\to \mathcal{M}^{K}_{x}(\Sigma).
	\]
	It follows from the construction that $\psi\circ \phi=\on{id}$. 
	
	Moreover, given $(m,o,s)\in \mathcal{M}^{T}_{x}(S,M)$, both $(m,o,s)$ and $\phi(\psi(m,o,s))$ restrict to the same marking in $\mathcal{M}^{K}_{x}(\Sigma)$, meaning that $(m,o,s)$ and $\phi(\psi(m,o,s))$ can only differ on $L_\bullet$. The edge orientations and markings can be made to agree via application of moves (1) and (2) from 
	Figure~\ref{fig:PLCWfix}, 
and deck transformations can be used to bring the edge indices for edges in $J_1$ to $0$. Since the compatibilities then determine the remaining edge indices in $L_1$, we thus have 
	\[
	(m,o,s)\sim_{\on{fix}}\phi(\psi(m,o,s)).
	\]
	
	Since the fixed PLCW moves are defined locally, both $\phi$ and $\psi$ descend to maps of quotients by $\sim_{\on{fix}}$, yielding the desired bijection. 
\end{proof}

\subsubsection{Gluing PLCW decompositions}

As in \cite{Novak:2015phd}, one can glue two marked PLCW decompositions along closed collars, and the induced $r$-spin structure on the glued surface is precisely that given by gluing the two $r$-spin structures on the unglued structures. Since the gluing is local, we will give the construction for gluing along a single boundary component. 

\begin{const}\label{const:gluing}
	Let $(S_1,M_1)$ and $(S_2,M_2)$ be two punctured surfaces equipped with open-closed boundary parametrizations $\phi_1$ and $\phi_2$, and boundary labels $x^\mathrm{in,1},x^\mathrm{out,1}$ and $x^\mathrm{in,2},x^\mathrm{out,2}$ respectively. Let $K_\bullet^1$ and $K_\bullet^2$ be $\phi^1$- and $\phi^2$-admissible PLCW decompositions with admissible markings $(m^1,o^1,s^1)$ and $(m^2,o^2,s^2)$ respectively.
	
	Denote by $\psi^1:U\to S_1$ the restriction of $\phi^1$ to a single outgoing boundary component, and by $\psi^2:V\to S_2$ the restriction of $\phi^2$ to a single incoming boundary component such that the boundary labels 
	$x^\mathrm{out,1}$ and $x^\mathrm{in,2}$ 
agree (note that this condition is vacuous if the boundary components are open). We denote by $S$ the gluing of $S_1$ and $S_2$ along $\psi_1$ and $\psi_2$, and we write $M:=M_1\amalg M_2\subset S$. Note $K_\bullet^1$ and $K_\bullet^2$ both restrict to the same PLCW decomposition $L_\bullet$ of $c(U)=c(V)$. 
	
	These data give rise to a new PLCW decomposition $K_\bullet$ of $\Sigma$ with cells: 
	\begin{align*}
	K_0 & = K^1_0\coprod_{L_0} K^2_0\\
	K_1 & = K^1_1 \coprod_{L_1} K^2_1\\
	K_2 & = K^1_2 \coprod K^2_2
	\end{align*}
	
	We suppose, without loss of generality, that the edge orientations induced on $L_\bullet$ by $o^1$ and $o^2$ are both those induced by the orientation of $\Sigma_1$ (the opposite of that induced by $\Sigma_2$). We then get edge orientations $o$ on $K_1$, and markings $m$ on $K_2$ induced by $o^1,o^2$ and $m^1,m^2$. 
	
	On edges not contained in $L_\bullet$, $s^1$ and $s^2$ induce edge indices $s$ on $L_\bullet$, which are admissible at all vertices not contained in $L_\bullet$.  Suppose $e\in L_1$ is an edge. We then define 
	\begin{align}
	s_e:= s^1_e+s_e^2+1
	\label{eq:glueing-rule-c}
	\end{align}
	yielding an admissible marking $(m,o,s)$ on $K_\bullet$. 
\end{const}

\begin{prop}
	With notation as in 
	Construction~\ref{const:gluing}, 
	there is an isomorphism of $r$-spin surfaces 
	\[
	\Sigma(m,o,s)\cong \Sigma_2(m_2,o_2,s_2)\circ 
		\Sigma_1(m_1,o_1,s_1).
	\]
\end{prop}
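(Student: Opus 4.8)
The plan is to reduce the statement to a local comparison of transition functions near the glued boundary component. By Proposition~\ref{prop:PLCWopenclosedisRSpin} all three $r$-spin surfaces in the claimed isomorphism are produced from markings by the same recipe of \cite{Novak:2015phd}: the (essentially unique) trivial $r$-spin structure on each $2$-cell, glued along interior edges by transition functions read off from the edge labels, orientations, and markings. First I would observe that, since $K_2 = K^1_2 \amalg K^2_2$, the glued surface $\Sigma(m,o,s)$ and the composite $\Sigma_2(m_2,o_2,s_2)\circ\Sigma_1(m_1,o_1,s_1)$ carry the same trivial $r$-spin structure on every $2$-cell, and have the same transition function across every edge not lying in the shared core decomposition $L_\bullet$; indeed, away from a neighbourhood of the glued collar, $\Sigma(m,o,s)$ restricts cell by cell to $\Sigma_1(m_1,o_1,s_1)$, respectively to $\Sigma_2(m_2,o_2,s_2)$. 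Hence the desired isomorphism is forced to be the identity on each $2$-cell, it is visibly compatible with the boundary parametrizations on the unglued components (so it defines a morphism in $\Bord{r}$), and the only point still to verify is that it respects the transition function across the single edge $e\in L_1$, the core loop of the glued boundary component.

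For that verification I would unwind the construction of the $r$-spin boundary parametrization along a closed boundary component: $s^1_e$ is exactly the transition function between the trivial $r$-spin structure on the $2$-cell $f_1\in K^1_2$ abutting $e$ and the fixed trivialization of the reference structure $\Cb^{\kappa}$ (with $\kappa$ the common boundary label), read in the direction determined by $o^1$ and the orientation of $S_1$; likewise $s^2_e$ is the transition function between $\Cb^{\kappa}$ and the trivial $r$-spin structure on $f_2\in K^2_2$, read with respect to the orientation of $S_2$, which is opposite to that of $S_1$ on the glued collar by the normalization fixed in Construction~\ref{const:gluing}. Gluing the two $r$-spin structures via the boundary parametrizations identifies the two copies of the reference collar and makes $f_1$ and $f_2$ adjacent across $e$; the transition function between them is the composite of the two transition functions through $\Cb^{\kappa}$. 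Carrying out this composition --- together with the effect of the orientation reversal and the passage between $r$-spin transition functions and elements of the relevant $\Zb/r$-torsor, which is where the extra unit comes from --- gives precisely $s^1_e + s^2_e + 1$, i.e.\ the gluing rule \eqref{eq:glueing-rule-c}, so the transition function of $\Sigma(m,o,s)$ across $e$ agrees with that of the glued $r$-spin structure. This is the same computation performed in the closed case in \cite{Novak:2015phd,Runkel:2018:rs}, and it requires nothing new for open glued components, where the core carries no edge labels and no vertex condition, so the gluing rule is vacuous and the trivialized collars are simply identified as in the definition of composition in $\Bord{r}$.

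I expect the main obstacle to be the computation in the second paragraph: fixing the orientation and torsor conventions carefully enough that the composite of the two reference-collar transition functions, with the orientation reversal built in, reproduces the ``$+1$'' in \eqref{eq:glueing-rule-c}. In the write-up I would either transcribe the relevant part of Novak's computation into the present notation, or --- more economically --- isolate the statement ``the closed-case gluing formula of \cite{Novak:2015phd} holds in a neighbourhood of each glued closed boundary component'' and then note that the open sector contributes nothing, so that the proposition follows by patching the local identifications together with the cell-by-cell identity away from the glued collars.
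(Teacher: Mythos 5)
Your proposal is correct and takes essentially the same approach as the paper: the paper's own proof is a one-line citation to the gluing procedure of \cite[Sec.\,4.5]{Novak:2015phd} (or alternatively the holonomy formula of \cite[Lem.\,5.5]{Runkel:2018:rs}), and what you do is spell out the content of the first of those citations --- reducing to a local comparison of transition functions at the glued core edge and noting that the open sector is vacuous. The only caution is that the phrase ``fixed trivialization of the reference structure $\Cb^{\kappa}$'' is a slight abuse (since $\Cb^{\kappa}$ is not globally trivial), but the intended meaning --- comparison with the fixed $r$-spin structure on the collar --- is the right one and is exactly what Novak's computation uses.
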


\begin{proof}
	This can be seen from, either the gluing procedure of \cite[Sec.\,4.5]{Novak:2015phd} or the holonomy formula in \cite[Prop.\,2.15]{Runkel:2018:rs}.
\end{proof}

\begin{rmk}
	We can apply an analogous gluing procedure, even if the restrictions of $K^1_\bullet$ and $K^2_\bullet$ to $c(U)=c(V)$ have more than one 1-cell, so long as both still restrict to the same PLCW decomposition $L_\bullet$.  We can apply this more general gluing procedure to obtain the following useful corollary. 
\end{rmk}

\begin{center}
	\begin{tikzpicture}[scale=0.5]
	\begin{pgfonlayer}{nodelayer}
	\node [style=none] (0) at (-5, 0) {};
	\node [style=none] (1) at (0, 5) {};
	\node [style=none] (2) at (5, 0) {};
	\node [style=none] (3) at (0, -5) {};
	\node [style=none] (4) at (-2, 1) {};
	\node [style=none] (5) at (0, 2) {};
	\node [style=none] (6) at (-1, -1) {};
	\node [style=none] (7) at (0.25, 0.5) {};
	\node [style=none] (8) at (2.25, 0.5) {};
	\node [style=none] (9) at (2.25, -1.75) {};
	\node [style=none] (10) at (0.25, -2.5) {};
	\node [style=none] (11) at (-2.25, -6.75) {};
	\node [style=none] (12) at (6.75, -4.5) {};
	\node [style=none] (13) at (6.75, 2.75) {};
	\node [style=none] (14) at (2, 6.75) {};
	\node [style=none] (15) at (-6.75, 4.25) {};
	\node [style=none] (16) at (-6.75, -3.75) {};
	\node [style=none] (17) at (7, 1) {};
	\end{pgfonlayer}
	\begin{pgfonlayer}{edgelayer}
	\draw [bend right=45] (0.center) to (3.center);
	\draw [bend right=45] (3.center) to (2.center);
	\draw [bend right=45] (1.center) to (0.center);
	\draw [bend left=45] (1.center) to (2.center);
	\draw (4.center) to (6.center);
	\draw (6.center) to (7.center);
	\draw (7.center) to (5.center);
	\draw (5.center) to (4.center);
	\draw (7.center) to (8.center);
	\draw (8.center) to (9.center);
	\draw (9.center) to (10.center);
	\draw (10.center) to (6.center);
	\draw (6.center) to (16.center);
	\draw (4.center) to (15.center);
	\draw (5.center) to (14.center);
	\draw (7.center) to (13.center);
	\draw (9.center) to (12.center);
	\draw (10.center) to (11.center);
	\draw (8.center) to (17.center);
	\end{pgfonlayer}
	\path[fill=black, opacity=0.2] (6.center)--(4.center)--(5.center)--(7.center)--(8.center)--(9.center)--(10.center)--cycle;
	\end{tikzpicture}
	\begin{tikzpicture}[scale=0.5]
	\draw (0,0) circle (5);
	\foreach \th in {10,30,80,140,200,260,330}{
		\draw (0,0) to (\th:6);
	};
	\path (-8,0) node {$\rightsquigarrow$}; 
	\end{tikzpicture}
\end{center}

\begin{cor}\label{cor:contract-edges-inside-disk}
	Let $\Sigma$ be a surface with boundary parametrization $\phi$, PLCW decomposition $K_\bullet$ and marking $(m,s,o)$. Let $\gamma:D^2\to \Sigma$ be an oriented embedding such that the restriction to the boundary $\gamma|_{S^1}:S^1\to \Sigma$ does not intersect $K_0$, only intersects the edges in $K_1$ transversely, and does not intersect any edge in $K_1$ more than once. Assume further that for any $\sigma\in K_2$, $\gamma(S^1)\cap\sigma$ consists of at most a single interval. 
	\begin{enumerate}
		\item We can replace $K_\bullet$ by a PLCW decomposition $H_\bullet$, which agrees with $K_\bullet$ outside the image of $\gamma$ and contains only one vertex in the interior of $\gamma(D^2)$.
		\item There is a marking of $H_\bullet$, which agrees with the marking of $K_\bullet$ outside of $\gamma(D^2)$, which induces an isomorphic $r$-spin structure on $\Sigma$. 
	\end{enumerate}  
\end{cor}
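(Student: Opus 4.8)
The plan is to localise everything to the embedded disk $D:=\gamma(D^2)$ and to combine the elementary PLCW moves with the gluing construction; the three hypotheses on $\gamma$ are exactly what make the first, bookkeeping, step go through.

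First I would arrange that $\gamma(S^1)$ is a subcomplex of a refinement of $K_\bullet$. By hypothesis $\gamma(S^1)$ misses $K_0$, meets each edge of $K_1$ transversally and at most once, and meets each $2$-cell in at most one arc; hence $\gamma(S^1)$ meets the $1$-skeleton in finitely many interior points $p_1,\dots,p_k$ of edges $e_1,\dots,e_k$, and between two consecutive such points it traverses a single face as a single arc between two boundary points of that face. (If $k=0$, so $D$ lies inside one face, enlarge $\gamma$ slightly to make $k\geq 1$.) I would first add each $p_j$ as a new $0$-cell --- subdividing $e_j$, which is the elementary move of Figure~\ref{fig:ElemMoves}(a) --- and then add each arc of $\gamma(S^1)$ between consecutive $p_j$'s as a new $1$-cell --- subdividing the face it traverses into two faces, which is the elementary move of Figure~\ref{fig:ElemMoves}(b). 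This produces a PLCW decomposition $K'_\bullet$ in which $\overline D$ is a subcomplex, whose restriction $J_\bullet$ to $\gamma(S^1)$ has $k$ vertices and $k$ edges, and which differs from $K_\bullet$ away from $\overline D$ only by the (reversible) subdivision of the exterior parts of $e_1,\dots,e_k$. Since the elementary PLCW moves induce isomorphisms of $r$-spin structures, the marking induced on $K'_\bullet$ presents the same $r$-spin structure on $\Sigma$ as $K_\bullet$.

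Next I would replace the decomposition inside $D$ by a radial one. Regard $\Sigma$ as glued from $D$ and $\Sigma':=\overline{\Sigma\setminus D}$ along $J_\bullet$, in the sense of the more general gluing procedure of the remark following Construction~\ref{const:gluing}, so that the marking of $K'_\bullet$ restricts to admissible markings on $D$ and on $\Sigma'$ whose $J_\bullet$-indices combine via \eqref{eq:glueing-rule-c}. Because $D$ is a disk embedded in the interior of $\Sigma$, it contains no punctures and no endpoints of parametrised intervals, so I may replace $K'_\bullet|_D$ by the radial decomposition $H^D_\bullet$ with $0$-cells $J_0\amalg\{v_0\}$ ($v_0$ the unique interior vertex), $1$-cells $J_1\amalg(J_0\times\{v_0\})$, and $2$-cells the $k$ triangular regions cut out by the radial edges, exactly as in the extension of Lemma~\ref{lem:ExtPLCWfromDrill}. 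Applying Proposition~\ref{prop:PLCWopenclosedisRSpin} to the surface $D$ (in the generality of the gluing remark, where collar cores may carry several $1$-cells; the single-$1$-cell condition is an inessential normalisation in its proof), both $\mathcal{M}^{K'|_D}_{x}(D)/_{\sim_{\on{fix}}}$ and $\mathcal{M}^{H^D}_{x}(D)/_{\sim_{\on{fix}}}$ biject with the same set $\on{rSpin}(D,\cdot)_{x}/_{\on{iso}}$; transporting the class of $K'_\bullet|_D$ across these bijections gives an admissible marking on $H^D_\bullet$ presenting the same $r$-spin structure on $D$ (its radial indices being pinned down up to $\sim_{\on{fix}}$ by the single interior-vertex condition \eqref{eq:intvertcondition} at $v_0$). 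Finally I would glue $H^D_\bullet$ back to $K'_\bullet|_{\Sigma'}$ along $J_\bullet$ using \eqref{eq:glueing-rule-c}: by the gluing proposition following Construction~\ref{const:gluing} the resulting marked PLCW decomposition $H_\bullet$ presents an $r$-spin structure isomorphic to that of $K'_\bullet$, hence to the original one; it agrees with $K'_\bullet$ (so, up to the harmless boundary subdivision of the first step, with $K_\bullet$) away from $D$, and it has the single vertex $v_0$ in the interior of $\gamma(D^2)$ --- which is precisely (1) and (2).

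The hard part is the first step: verifying that ``$\gamma(S^1)$ becomes a subcomplex'' can genuinely be realised as a composite of the moves of Figure~\ref{fig:ElemMoves} rather than merely some PL homeomorphism, using that missing $K_0$ and crossing each edge at most once make the added $0$-cells honest subdivisions, and that the single-arc-per-face hypothesis is exactly what forces each added $1$-cell to split a disk into two disks. Once the refinement is set up, the interior reduction and the regluing are routine given Proposition~\ref{prop:PLCWopenclosedisRSpin}, Lemma~\ref{lem:ExtPLCWfromDrill}, and the gluing construction.
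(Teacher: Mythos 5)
Your proposal is correct and follows essentially the same route as the paper's own argument: first use the elementary moves of Figure~\ref{fig:ElemMoves} to refine $K_\bullet$ so that $\gamma(S^1)$ becomes a subcomplex, then cut along $\gamma(S^1)$ and replace the interior marked PLCW decomposition of the disk with the radial one of Figure~\ref{fig:PLCWextension}, using the gluing construction to reassemble. The only difference is one of emphasis --- the paper handles the subdivision step tersely with a ``without loss of generality'' normalization of orientations and markings at the intersection points, while you spell out the bijection of Proposition~\ref{prop:PLCWopenclosedisRSpin} to justify the interior replacement --- but the decomposition of the argument and the key ingredients are identical.
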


\begin{proof}
	Denote by $V$ the set of points of intersection of $\gamma|_{S^1}$ with edges in $K_1$, and order them $(v_1,v_2,\ldots,v_n)$ compatibly with the orientation of $S^1$. Assume without loss of generality that, for each point of intersection, the edge markings and orientations of $K_\bullet$ are such that, for all $1\leq i\leq n$, 
	\begin{itemize}
		\item the edge containing $v_i$ is oriented into $\gamma(D^2)$, 
		\item the edge $e$ containing $v_i$ is the marked edge of the 2-cell to the left of $e$ (with respect to the orientations of $e$ and $\Sigma$).
	\end{itemize}
	Using moves of type (a) from 
	Figure~\ref{fig:ElemMoves}, 
	we can then split the edges containing $v_i$ at the points $v_i$. Using moves of type (b) from 
	Figure~\ref{fig:ElemMoves}, 
	we can add $1$-cells $e_i$ to $e_{i+1}$ to the PLCW decomposition (with edge index 0 in all cases), so that we obtain a new (equivalent) marked PLCW decomposition which now restricts to a PLCW decomposition of $\gamma(S^1)$. 
	
	Cutting along $\gamma(S^1)$ displays $\Sigma$ as the gluing of two $r$-spin surfaces along a common closed boundary component with boundary label $0$. Since one of these is the disk, we may take a different marked PLCW decomposition of the disk representing the same $r$-spin structure. Choosing the radial PLCW decomposition from 
	Figure~\ref{fig:PLCWextension} 
	yields the desired result. 
\end{proof}

\subsection{Structured graphs and PLCW decompositions}
\label{sec:str-graph}

In what follows, we will make extensive use of the notion of structured graphs as defined in \cite{Dyckerhoff:2015csg}, and elaborated on in \cite{Stern:2016stft}. However, to more easily make connections with marked PLCW decompositions, our definitions will differ slightly from those found in the above sources. For this reason, we begin by reviewing the formalism of graphs structured over the $r$-cyclic category $\Lambda_r$. Where applicable, we will comment on the differences between the variant discussed here and those presented in \cite{Dyckerhoff:2015csg,Stern:2016stft}. For the definition of the $r$-cyclic category, as well as the conventions we use, see 
Appendix~\ref{app:Lambda-r}. 
In particular, the notations $\psi^{i,j}_k$ and $\theta^{i,j,k}_\ell$ are defined there. 

In the case where $r=0$ it is necessary to use the \textsl{paracyclic category} $\Lambda_\infty$ in place of $\Lambda_r$. While the model still works in this case, and bears effectively the same relation to marked PLCW decompositions, we will restrict our attentions in this section to $0<r<\infty$ so as to avoid the need for two conventions.

\begin{rmk}
	For the entirety of this section, we fix the following convention for marked PLCW decompositions. An incoming  boundary edge is always given the edge orientation opposite that induced by the surface. An outgoing boundary edge always has edge orientation induced by the surface. We can do this without loss of generality by applying fixed PLCW moves of type 1.  
\end{rmk}

\begin{defn}
	A \textsl{graph}\index{graph} $\Gamma$ consists of a finite set $H$ of half-edges, a finite set $V$ of vertices, an involution $\eta: H\to H$, and a map $s: H\to V$. 

	We will call the orbits of order $2$ under $\eta$ the \textsl{edges} of $\Gamma$, and denote the set of edges by $E$. We will call the orbits of order $1$ under $\eta$ the \textsl{external half-edges}. We will consistently abuse notation by identifying an external half-edge $\{h\}$ with the half-edge $h$. The set $H_v:=s^{-1}(v)$ for $v\in V$ consists of the 
		half-edges 
	\textsl{adjacent to $v$}, and $|H_v|$ is the \textsl{valency} of $v$. We additionally prohibit vertices of valency $0$. 
\end{defn}

\begin{const}
	Given a graph $\Gamma$, we construct the \textsl{incidence category $I(\Gamma)$ of $\Gamma$}\index{incidence category} as follows:
	\begin{enumerate}
		\item $I(\Gamma)$ has an object for each vertex or edge of $\Gamma$.
		\item If $e=\{h,h^\prime\}$ is an edge of $\Gamma$, with $s(h^\prime)=v$, then $I(\Gamma)$ has a morphism $v\to e$. 
	\end{enumerate}
	$I(\Gamma)$ additionally has formally defined identities at every object. Since there are no non-trivial pairs of composable morphisms, this uniquely defines a category. 
	We further define the \textsl{augmented incidence category} $A(\Gamma)$ to, in addition to the above, have
	\begin{enumerate}
		\setcounter{enumi}{2}
	\item an object for every external half-edge of $\Gamma$,
		\item for each external half-edge $h$ of $\Gamma$, a morphism $s(h)\to h$. 
	\end{enumerate}
	We construct a functor, the \textsl{incidence diagram} \index{incidence diagram}
	\[
	I_\Gamma: I(\Gamma)\to \Set 
	\]
	as follows. We send $v\mapsto H_v$ and $e=\{h,h^\prime\}\mapsto \{h,h^\prime\}$. The morphism $s(h)\to \{h,h^\prime\}$ is sent to the map $H_v\to \{h,h^\prime\}$ given by
	\begin{equation*}
	k  \mapsto \begin{cases}
	h^\prime & k=h\\
	h & \text{else}
	\end{cases}
	\end{equation*}
	
	Given a partition of the external half-edges of $\Gamma$ into two sets $\on{In}(\Gamma)$ and $\on{Out}(\Gamma)$, we can extend the incidence diagram to the augmented incidence category to obtain a functor 
	\[
	A_\Gamma: A(\Gamma)\to \Set 
	\]
	as follows. To each external half-edge $h$, we assign the set $\{0,1\}$, and to the morphism $s(h)\to h$, we assign the map $H_v\to \{0,1\}$, given by the map 
		\begin{equation*}
			k  \mapsto \begin{cases}
			0 & k=h\\
			1 & \text{else}
			\end{cases}
		\end{equation*}
		if $h\in \on{In}(\Gamma)$ and the map 
		\begin{equation*}
		k  \mapsto \begin{cases}
		1 & k=h\\
		0 & \text{else}
		\end{cases}
		\end{equation*}
		if $h\in \on{Out}(\Gamma)$.
	We will call the functor $A_\Gamma$ the \textsl{augmented incidence diagram} of $\Gamma$. 
\end{const}

\begin{ntt}
	We will refer to the realization of the nerve $|N(A(\Gamma))|$ as the \textsl{realization of $\Gamma$}, and denote it by $|\Gamma|$. 
\end{ntt}

\begin{defn}
	Let $\Gamma$ be a graph. A \textsl{$\Lambda_r$-structure on $\Gamma$} consists of a functor $\tilde{A}_\Gamma$ and a natural isomorphism $\mu$ making the diagram 
	\[
	\begin{tikzcd}
	& \Lambda_r\arrow[d, "\lambda_r"]\\
	A(\Gamma)\arrow[ur,"\tilde{A}_\Gamma"]\arrow[r,"A_\Gamma"'] & \Set 
	\end{tikzcd}
	\]
	commute up to $\mu$, subject to the condition that, for any external half-edge $h$ of $\Gamma$, the component $\mu_h$ of $\mu$ is the identity on the set $\{0,1\}$. 
		
	An \textsl{isomorphism of $\Lambda_r$-structures $(\tilde{A}_\Gamma,\mu^A)\to(\tilde{B}_\Gamma,\mu^B)$ on $\Gamma$} consists of a natural isomorphism $\theta: \tilde{A}_\Gamma \Rightarrow \tilde{B}_\Gamma$ such that the diagram 
	\[
	\begin{tikzcd}
	\lambda_r\circ\tilde{A}_\Gamma\arrow[rr,Rightarrow, "\lambda_r\circ\theta"]\arrow[dr,Rightarrow,"\mu^A"'] & & \lambda_r\circ\tilde{B}_\Gamma\arrow[dl,Rightarrow,"\mu^B"]\\
	& A_\Gamma  &
	\end{tikzcd}
	\]
	commutes, and such that the component $\theta_h$ at any external half-edge $h$ is the identity on $[1]_r$. 
\end{defn}

\begin{ntt}
	We will denote the set of $\Lambda_r$-structures on $\Gamma$ by $\Lambda_r(\Gamma)$, and will denote the equivalence classes under isomorphism of $\Lambda_r$-structures by 
	\[
	\Lambda_r(\Gamma)_{/\sim_{\on{iso}}}. 
	\]
\end{ntt}

\begin{rmk}
	The definition above of a $\Lambda_r$-structure on $\Gamma$ is effectively equivalent to that of an augmented $\Lambda_r$-structured graph from \cite{Dyckerhoff:2015csg} or \cite{Stern:2016stft}. The natural isomorphisms are introduced here to avoid the necessity of working with more general $\Lambda_r$-structured sets. The isomorphisms of $\Lambda_r$-structures defined above correspond to a special case of the more general morphisms of structured graphs introduced in \cite{Dyckerhoff:2015csg}. 
\end{rmk}

\begin{const}
	Given a whole surface $(S,M)$ with a PLCW decomposition $K_\bullet$ of $S$ with respect to $M$, we define the dual graph $\Gamma_K$ of $K_\bullet$ to have vertices $V:= K_2$, half-edges 
	\[
		H=\left\lbrace (a,b)\in K_2\times \overline{K}_1\mid b
		\text{ is an edge of } a\right\rbrace,
	\]
	valency map $s: H  \to V$ given by $(a,b)  \mapsto a$
	and involution $\eta:H\to H$ which sends $(a,b)$ to $(a^\prime,b)$, where $a^\prime$ is 
		\begin{enumerate}
			\item the other 2-cell abutting $b$, if $b$ does not lie in $\partial S$,\footnote{Note that this could be the same as $a$, depending on the PLCW decomposition.} 
			\item $a$ if $b$ lies in the boundary $\partial S$. 
		\end{enumerate}
 
	Note that we can identify $|\Gamma_K|$ with a subcomplex of the 1-skeleton of the barycentric subdivision of $K_\bullet$, and thereby obtain an embedding 
	\[
	\gamma_K: |\Gamma_K|\to S\setminus M 
	\]
	inducing a bijection between the set of parameterized boundary components and the set $\partial\Gamma_K$ of external half-edges of $\Gamma_K$. Moreover, if $M=K_0\setminus \partial S$, then $\gamma_K$ is a homotopy equivalence. 
\end{const}

\begin{rmk}
	Let $\gamma: |\Gamma| \to S\setminus M$
	be an embedding which is a homotopy equivalence, such that $\gamma$ defines a bijection between $\partial \Gamma$ and the set of parameterized boundary components of $S$. Then there is a PLCW decomposition $K_\bullet$ of $S$ with respect to $M$ and an isomorphism $\phi:\Gamma\cong \Gamma_K$ such that $\gamma_K\circ |\phi|$ is homotopic to $\gamma$.
\end{rmk}

\begin{rmk}
	An embedding of a graph $\Gamma$ into an oriented surface $S$ induces a cyclic order on the set of half-edges incident to any vertex or edge in $\Gamma$. Given such an embedding, we will consider only those $r$-cyclic structures on $\Gamma$ which induce precisely these cyclic orders. We will denote the set of such by $\Lambda_r(\Gamma)^S$. By a \textsl{$\Lambda_r$-structured graph embedded in $(S,M)$}, we will mean an isotopy class of embeddings satisfying this condition. 
\end{rmk}

We now aim to relate markings of a PLCW decomposition $K_\bullet$ of $(S,M)$ to $\Lambda_r$-structures on the dual graph of $K_\bullet$. The fundamental constructions in this regard are as follows.

\begin{const}\label{const:StructuredGraphFromPLCW}
	Suppose given an admissible marked PLCW decomposition $(K_\bullet, m,s,o)$ of a whole surface $(S,M)$. The dual graph $\Gamma$ comes equipped with an embedding 
	\[
	\gamma:|\Gamma|\to S\setminus M
	\]
	which is a homotopy equivalence.
	We can put a $\Lambda_r$-structure on $\Gamma$ as follows:
	\begin{enumerate}
		\item To each vertex $v$ of $\Gamma$, corresponding to an $(n+1)$-gon $\sigma_v\in K_2$, we assign  $[n]_r\in \Lambda_r$. Note that the edge marking together with the cyclic order on the set $E_v$ of edges of $\sigma_v$ defines a bijection 
		\[
		\mu_v: \{0,1,\ldots,n\}\to E_v.
		\]
		\item To each internal edge $e$ of $\Gamma$ corresponding to an internal edge $u_e\in K_1$, we assign $[1]_r\in \Lambda_r$. Note that the oriented unit normal to $u_e$ in $S$ defines a linear order on the set $U_e$ of 2-cells adjacent to $u_e$, and therefore an isomorphism 
		\[
		\mu_e:\{0,1\}\to U_e.
		\]
		\item To each external half-edge $e$ of $\Gamma$ corresponding to a boundary edge $u_e\in K_1$, we assign $[1]_r$. 
		\item For each morphism $v\to e$ in the incidence category $I(\Gamma)$, where $e$ is an internal edge, we assign a morphism as follows:
		\begin{enumerate}
			\item If the edge orientation of $u_e$ is opposite that induced by the orientation of $\sigma_v$, then we send $v\to e$ to the morphism 
			\[
			\psi_n^{k,n}\circ \tau^{-(n+1)s_e}:[n]_r\to [1]_r
			\]
			where $\sigma_v$ is an $(n+1)$-gon, $k=\mu_v^{-1}(u_e)$, and $s_e$ is the edge label of $u_e$. 
			\item if the edge orientation of $u_e$ is that induced by the orientation of $\sigma_v$, then we send $v\to e$ to the morphism 
			\[
			\psi_n^{k,0}: [n]_r\to [1]_r
			\]
			where $\sigma_v$ is an $(n+1)$-gon and $k=\mu_v^{-1}(u_e)$.
		\end{enumerate}
		\item For each morphism $v\to e$ in the incidence category $I(\Gamma)$, where $e$ is an external half-edge, we assign a morphism as follows: 
		\begin{enumerate}
			\item If the edge orientation of $u_e$ is opposite that induced by the orientation of $\sigma_v$ (i.e.\ if $e$ is incoming), then we send $v\to e$ to the morphism 
			\[
			\psi_n^{k,n}\circ \tau^{-(n+1)s_e}:[n]_r\to [1]_r
			\]
			where $\sigma_v$ is an $(n+1)$-gon, $k=\mu_v^{-1}(u_e)$, and $s_e$ is the edge label of $u_e$. 
			\item if the edge orientation of $u_e$ is that induced by the orientation of $\sigma_v$ (i.e.\ if $u$ is outgoing), then we send $v\to e$ to the morphism 
			\[
			\psi_n^{k,0}\circ \tau^{(n+1)(s_e+1)}:[n]_r\to [1]_r
			\]
			where $\sigma_v$ is an $(n+1)$-gon, $k=\mu_v^{-1}(u_e)$.
		\end{enumerate}		
	\end{enumerate}
	This defines a functor $F_K:I(\Gamma)^{\on{op}}\to \Lambda_r$. 
	Moreover, the isomorphisms $\mu_e$ and $\mu_v$ define a natural isomorphism $\mu$ from 
	\[
	I(\Gamma)^{\on{op}}\overset{F_K}{\longrightarrow} \Lambda_r\to \Set 
	\]
	to the incidence diagram $I_\Gamma:I(\Gamma)\to \Set$.
\end{const}

\begin{const}
	Let $\Gamma$ be a graph with a $\Lambda_r$-structure $(\tilde{A}_\Gamma, \mu)$. For any object $o\in A(\Gamma)$ which is not an external half-edge, an automorphism $\phi: \tilde{A}_\Gamma(o)\overset{\cong}{\to} \tilde{A}_\Gamma(o)$ defines a new $\Lambda_r$-structure $(\tilde{B}_\Gamma,\nu)$ by precomposing morphisms $\tilde{A}_\Gamma(f)$ into $o$ with $\phi$  (if $o$ is an edge) or postcomposing morphisms $\tilde{A}_\Gamma(f)$ out of $o$ with $\phi^{-1}$ (if $o$ is a vertex).

	We then have a canonical natural transformation $\eta(o,\phi):\tilde{A}_\Gamma\Rightarrow \tilde{B}_\Gamma$ with $o$-component $\phi$, inducing an isomorphism of $\Lambda_r$-structures on $\Gamma$. 
\end{const}

\begin{defn}
	We call isomorphisms of $\Lambda_r$-structures on $\Gamma$ of the form $\eta(o,\phi)$ for some $o$ and $\phi$ the \textsl{elementary isomorphisms}\index{elementary isomorphism}. If $o$ is an edge, we call $\eta(o,\phi)$ and \textsl{edge isomorphism} an if $o$ is a vertex we call $\eta(o,\phi)$ a \textsl{vertex isomorphism}. 	
\end{defn}

\begin{lem}\label{lem:factoringgraphisos}
	Every isomorphism of $\Lambda_r$-structures on a graph $\Gamma$ admits a factorization in terms of elementary isomorphisms. 
\end{lem}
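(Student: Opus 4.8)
The plan is to exploit the extremely simple shape of the incidence category: in $I(\Gamma)$ (and in $A(\Gamma)$) every non-identity morphism runs from a vertex to an edge or external half-edge, and no two non-identity morphisms are composable. Two consequences will be used repeatedly. First, a natural transformation between functors out of $I(\Gamma)^{\mathrm{op}}$ is nothing but a compatible family of components, with one naturality square for each incidence $v\to e$, and that square couples only the components at $v$ and at $e$; in particular a natural \emph{isomorphism} is determined by its source together with its components. Second, by construction the elementary twist $\eta(o,\phi)$ alters only the structure morphisms adjacent to $o$ and leaves all objects and all other morphisms fixed.

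I would first record that an isomorphism $\theta\colon(\tilde{A}_\Gamma,\mu^A)\to(\tilde{B}_\Gamma,\mu^B)$ of $\Lambda_r$-structures on a \emph{fixed} $\Gamma$ automatically preserves the object assignment. Indeed, the compatibility with the augmented incidence diagram $A_\Gamma$ built into the notion of an isomorphism of $\Lambda_r$-structures makes $\lambda_r(\theta_o)\colon\lambda_r(\tilde{A}_\Gamma(o))\to\lambda_r(\tilde{B}_\Gamma(o))$ a bijection compatible with the identifications of both sides with $A_\Gamma(o)$; since $|\lambda_r([n]_r)|=n+1$, this forces $\tilde{A}_\Gamma(o)=\tilde{B}_\Gamma(o)$ for every object $o$. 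Hence each $\theta_o$ is an automorphism of $\tilde{A}_\Gamma(o)$, and $\theta_h=\id_{[1]_r}$ at every external half-edge $h$.

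Next I would clear the vertices. Fix any order on $V$ and form $\theta^V:=\prod_{v\in V}\eta(v,\theta_v)$, the composite of the elementary vertex isomorphisms. Since composition of natural transformations is taken component-wise and $\eta(v,\theta_v)$ has $v$-component $\theta_v$ and the identity at every other object, $\theta^V$ has component $\theta_v$ at each vertex $v$ and the identity at every edge and every external half-edge (in particular $\theta^V$ is independent of the chosen order, and no object is changed). Set $\theta':=\theta\circ(\theta^V)^{-1}$: this is again an isomorphism of $\Lambda_r$-structures, and its component is $\theta_v\circ\theta_v^{-1}=\id$ at each vertex, $\id$ at each external half-edge, and $\theta_e$ at each internal edge $e$. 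Applying the same move to edges, let $\theta^E:=\prod_{e}\eta(e,\theta_e)$, whose components are $\theta_e$ at each internal edge and the identity elsewhere. Then $\theta^E$ and $\theta'$ have the same source $\tilde{A}^{V}_{\Gamma}$ and the same components, so $\theta^E=\theta'$. Therefore
\[
\theta=\theta'\circ\theta^V=\Big(\prod_{e}\eta(e,\theta_e)\Big)\circ\Big(\prod_{v}\eta(v,\theta_v)\Big)
\]
is a composite of elementary isomorphisms, which proves the lemma; symmetrically, one could clear edges first.

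The only point requiring real care --- hence the step I would expect to be the main obstacle --- is justifying the component bookkeeping: that $\eta(o,\phi)$ genuinely acts as the identity on every component other than $o$, so that $\theta^V$ and $\theta^E$ have exactly the stated components and the successive twists do not interfere. This is precisely where the height-one structure of $I(\Gamma)$ does the work: each twist touches only the finitely many incidence morphisms at $o$, and each naturality square sees only one vertex-edge pair of components, so constraining one component to be the identity never forces a change elsewhere; incidentally this also shows that all the elementary isomorphisms occurring above pairwise commute (a vertex twist and an edge twist at a common incidence commute because pre- and post-composition commute). One must also keep track of the contravariant variance of $I(\Gamma)^{\mathrm{op}}\to\Lambda_r$ when checking that $\eta(o,\phi)$ is natural, but this is routine. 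Note finally that no elementary isomorphism at an external half-edge is ever used, consistently with the constraint $\mu_h=\id$ in the definition of a $\Lambda_r$-structure.
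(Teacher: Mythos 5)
Your proof is correct and takes essentially the same approach as the paper's: factor the isomorphism object-by-object into elementary twists, using the fact that the incidence category has no composable pairs of non-identity morphisms so the components do not interfere. The paper dismisses the bookkeeping as ``immediate from the definitions,'' whereas you spell it out (and add the observation that the elementary twists commute), but the underlying argument is identical.
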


\begin{proof}
	Given an isomorphism $\kappa: (\tilde{A}_\Gamma,\mu^A)\to(\tilde{B}_\Gamma,\mu^B)$, choose an order $\ell_1,\ell_2,\cdots \ell_k$ on the objects of $I(\Gamma)$ which are not external half-edges. It is immediate from the definitions that 
	\[
	\kappa=\eta(\ell_1,\kappa_{\ell_1})\circ\eta(\ell_2,\kappa_{\ell_2}) \circ \cdots \circ \eta(\ell_k,\kappa_{\ell_k}). \qedhere
	\] 
\end{proof}

\begin{prop}\label{prop:fixedmovesandIsos}
	Let $(S,M)$ be a punctured surface, $\phi:\coprod U_i\to S$ an open boundary parametrization, and $K_\bullet$ a $\phi$-admissible PLCW decomposition of $S$ with $M=K_0\setminus \partial S$. Then 
	Construction~\ref{const:StructuredGraphFromPLCW} 
defines a bijection 
	\[
	\Lambda_r(\Gamma_K)^S_{/\sim_{\on{iso}}}\cong \mathcal{M}^K(S,M)_{/\sim_{\on{fix}}}.
	\] 
\end{prop}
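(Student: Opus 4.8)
The plan is to view Construction~\ref{const:StructuredGraphFromPLCW} as a map of sets $\Phi\colon \mathcal{M}^K(S,M)\to\Lambda_r(\Gamma_K)^S$ and to show it descends to a bijection after passing to the indicated quotients. First I would record two simplifications coming from the hypotheses: since $M=K_0\setminus\partial S$ there are no vertices of $K_\bullet$ in the interior of $S$ other than punctures, and since $\phi$ is an \emph{open} boundary parametrization there are no closed collars; hence, by Definition~\ref{defn:AdmissiblePLCWSM}, the set $\mathcal{M}^K(S,M)$ consists of \emph{all} $\phi$-admissible markings, with no vertex conditions to track. So the content of the proposition is purely a repackaging of the combinatorial data.

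Next, well-definedness of $\overline{\Phi}$. One checks that $\Phi(m,o,s)$ really is a $\Lambda_r$-structure lying in $\Lambda_r(\Gamma_K)^S$: functoriality of the underlying functor is vacuous because $I(\Gamma_K)$ has no non-identity composable pairs, the component of the natural isomorphism at each external half-edge is the identity of $\{0,1\}$ by fiat, and the cyclic orders on $H_v$ and on the adjacency sets of edges induced by the embedding $\gamma_K$ are exactly those recorded by the bijections $\mu_v$ and $\mu_e$ of the construction. Then I would run through the three fixed PLCW moves of Definition~\ref{defn:fixmoves}. Reversing the orientation of an edge $u_e$ together with the compensating label change of Figure~\ref{fig:PLCWfix}(1) is implemented by an edge isomorphism $\eta(e,\tau)$, $\tau\in\Aut_{\Lambda_r}([1]_r)$; moving the marked edge of an $(n{+}1)$-gon $\sigma_v$ counterclockwise together with the label change of Figure~\ref{fig:PLCWfix}(2) is implemented by a vertex isomorphism $\eta(v,\tau)$; and a deck transformation corresponds to the composite of the vertex isomorphisms $\eta(v,\tau^{n_v+1})$ by the ``deck'' automorphisms, corrected by edge isomorphisms by powers of $\tau^{2}\in\Aut_{\Lambda_r}([1]_r)$ --- this being exactly the operation of uniformly changing the trivialization of the $r$-spin bundle over each $2$-cell. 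Each of these is a short computation with the generators $\psi^{i,j}_k$ and $\tau$ of Appendix~\ref{app:Lambda-r}. Thus $\Phi$ induces $\overline{\Phi}\colon\mathcal{M}^K(S,M)_{/\sim_{\on{fix}}}\to\Lambda_r(\Gamma_K)^S_{/\sim_{\on{iso}}}$.

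For surjectivity I would produce a normalized representative of any $\Lambda_r$-structure $(\tilde{A}_{\Gamma_K},\mu)\in\Lambda_r(\Gamma_K)^S$: counting half-edges forces $\tilde{A}_{\Gamma_K}(v)\cong[n_v]_r$ when $\sigma_v$ is an $(n_v{+}1)$-gon and $\tilde{A}_{\Gamma_K}(e)\cong[1]_r$ for edges, and by Lemma~\ref{lem:factoringgraphisos} we may compose with vertex and edge isomorphisms until the natural isomorphism $\mu$ agrees with the incidence diagram exactly as in Construction~\ref{const:StructuredGraphFromPLCW}. In that form each morphism $v\to e$ is a morphism $[n_v]_r\to[1]_r$ in $\Lambda_r$, and the description of morphisms of $\Lambda_r$ recalled in Appendix~\ref{app:Lambda-r} shows it is uniquely of one of the shapes appearing in the construction; reading off the internal index recovers the marked edge and the orientation of $u_e$, and reading off the residual power of $\tau$ recovers $s_e$. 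By the first paragraph the resulting triple $(m,o,s)$ lies in $\mathcal{M}^K(S,M)$, and $\Phi(m,o,s)$ is isomorphic to the structure we started with. For injectivity, suppose $\Phi(m,o,s)\cong\Phi(m',o',s')$; decompose the isomorphism into elementary isomorphisms by Lemma~\ref{lem:factoringgraphisos}, and observe --- running the bookkeeping of the previous paragraph in reverse --- that applying one elementary isomorphism and then re-normalizing changes the underlying marking by a fixed PLCW move or a composite of such; hence $(m,o,s)\sim_{\on{fix}}(m',o',s')$ and $\overline{\Phi}$ is a bijection.

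The step I expect to be the real obstacle is the treatment of deck transformations in the second and third paragraphs. Unlike the other two moves, a deck transformation is global, whereas the automorphism $\tau^{n_v+1}\in\Aut_{\Lambda_r}([n_v]_r)\cong\Zb/(n_v+1)r$ localized at a vertex $v$ a priori alters only the labels of the edges around the single face $\sigma_v$; showing that these face-local moves, together with the $\tau^2$-twists at edges, are generated by the one global deck transformation of Definition~\ref{defn:fixmoves} is precisely the point where all three fixed moves must be used at once, and it requires a careful accounting of the compensating edge-label changes displayed in Figure~\ref{fig:PLCWfix}.
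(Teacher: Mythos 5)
Your overall strategy matches the paper's: view Construction~\ref{const:StructuredGraphFromPLCW} as a map $\Phi$ of sets, check that each fixed PLCW move is sent to an elementary isomorphism, and then use Lemma~\ref{lem:factoringgraphisos} to reduce isomorphisms to elementary ones for the injectivity/surjectivity argument. However, the concrete identifications you make are not all correct, and the step you flag as ``the real obstacle'' is in fact the easiest one --- the difficulty you anticipate comes from a misreading of Definition~\ref{defn:fixmoves}.

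The deck transformation (Move 3) is not a global operation on all edge labels of $K_\bullet$; it is face-local, changing the trivialization of the $r$-spin bundle over a \emph{single} $2$-cell and hence adding $\pm k$ to the labels of the edges of that one face (this is what Figure~\ref{fig:PLCWfix}(3) depicts). Once one reads it this way, Move 3 is literally the $(n{+}1)$-fold iterate of Move 2 at the chosen face $\sigma_v$, and the paper dispatches it in one line: it corresponds to $\eta(v,\tau^{-(n+1)k})$. Your proposed treatment --- a product of $\eta(v,\tau^{n_v+1})$ over \emph{all} vertices ``corrected by edge isomorphisms by powers of $\tau^{2}$'' --- is both unnecessary and, on inspection, does not close up: postcomposing the outgoing-edge morphism $\psi_m^{j,0}$ with $\tau^{\pm 2}$ does not land back in the image of $\Phi$ (for $r>1$ it produces $\phi_m^0\circ\tau^{j\pm(m+1)}$, which is not $\psi_m^{j',0}$ for any $j'\in\{0,\dots,m\}$). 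So the worry in your last paragraph is a phantom, but the fix you sketched for it would not work.

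A second error, which you could not have caught without carrying out the $\Lambda_r$ computations you deferred: Move 1 (edge reversal with the compensating change $s_e\mapsto -1-s_e$) does \emph{not} correspond to $\eta(e,\tau)$, but to $\eta(e,\tau^{2s_e+1})$. The power of $\tau$ depends on the current edge label; it is an odd power (so it swaps $\mu_e$, matching the orientation reversal), but it equals $\tau$ only when $s_e=0$. Both errors are symptoms of the same issue: the claims ``each of these is a short computation'' are plausible in outline, but the actual powers of $\tau$ matter, and without doing the calculations the identifications are guesses. The paper's proof consists precisely of those calculations; I would suggest you carry out at least the Move 1 computation to see where $\tau^{2s_e+1}$ comes from, and revisit Figure~\ref{fig:PLCWfix}(3) to confirm the face-locality of the deck transformation.
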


\begin{proof}
	We need only check that elementary isomorphisms are sent to fixed PLCW moves and vice versa. We defer the necessary computations to 
	Appendix~\ref{app:appendixproofs}.
\end{proof}

\begin{const}
	Given a graph $\Gamma$ and an edge $e=\{h,h^\prime\}$ of $\Gamma$ connecting two distinct vertices $v_1$ and $v_2$, we define the  $\Gamma_{/e}$ to be the graph obtained by contracting the edge $e$.
	
	This construction defines an obvious functor $C_e: A(\Gamma)\to A(\Gamma_{/e})$, and a natural transformation 
	\[
	\rho^e: A_{\Gamma_{/e}}\circ C_e \Rightarrow A_\Gamma. 
	\]
	Which is the identity on objects other than $v_1$, $v_2$, and $e$. In particular, $\rho^e$ determines (and is determined by) a diagram 
	\begin{equation}
		\begin{tikzcd}
			A_{\Gamma_{/e}}(v)\arrow[r, "\rho^e_{v_1}"]\arrow[d,"\rho^e_{v_2}"'] & A_\Gamma(v_1)\arrow[d]\\
			A_{\Gamma}(v_2)\arrow[r] & A_\Gamma(e)
		\end{tikzcd}
	\end{equation}
	
	In the presence of a $\Lambda_r$ structure $(\tilde{A}_\Gamma,\mu^A)$ on $\Gamma$, we will call a $\Lambda_r$-structure $(\tilde{A}_{\Gamma_{/e}},\mu^e)$ on $\Gamma_{/e}$ the \textsl{(structured) contraction of $(\tilde{A}_\Gamma,\mu^A)$} if 
	\begin{enumerate}
		\item $\rho_e$ lifts to a natural natural isomorphism $\tilde{\rho}^e: \tilde{A}_{\Gamma_{/e}}\circ C_e \Rightarrow \tilde{A}_\Gamma$ which is the identity away from $v_1$, $v_2$, and $e$.
		\item The induced diagram  
		\begin{equation}\label{diag:structcont}
			\begin{tikzcd}
				\tilde{A}_{\Gamma_{/e}}(v)\arrow[r, "\tilde{\rho}^e_{v_1}"]\arrow[d,"\tilde{\rho}^e_{v_2}"'] & \tilde{A}_\Gamma(v_1)\arrow[d]\\
				\tilde{A}_{\Gamma}(v_2)\arrow[r] & A_\Gamma(e)
			\end{tikzcd}
		\end{equation}
		is a pullback diagram
	\end{enumerate}
	Note that the structured contraction of $(\tilde{A}_\Gamma,\mu^A)$ always exists and is uniquely defined up to unique isomorphism.
	
	The induced map $|C_e|:|\Gamma|\to |\Gamma_{/e}|$ is a homotopy equivalence which is the identity away from the closed subset 
	\begin{center}
		\begin{tikzpicture} 
		\draw[fill=black] (0,0) circle (0.05) node (e){} node[above] {$e$};
		\draw[fill=black] (-1,0) circle (0.05)node (v1){} node[above] {$v_1$};
		\draw[fill=black] (1,0) circle (0.05) node (v2){} node[above] {$v_2$}; 
		\draw[->] (v1) to (e);
		\draw[->] (v2) to (e); 
		\draw[fill=black] (4,0) circle (0.05) node[above] {$v$};
		\path (2.5,0) node {$\mapsto$};
		\end{tikzpicture}
	\end{center}
	One can choose a homotopy inverse $W_e: |\Gamma_e|\to |\Gamma|$ which is the identity on all objects except for $e$, $v_1$, and $v_2$. Indeed, we can choose this homotopy such that $v\mapsto e$, and such that, for any edge or half-edge $u$ attached to $v_1$ (or $v_2$) in $\Gamma$, the path $v\to u$ in $|\Gamma_{/e}|$ is sent to the path given by $e\to v_1\to u$ in $|\Gamma|$. Consequently, given an embedding $\gamma:|\Gamma|\to S\setminus M$ which meets every boundary component a set of external half-edges  and is a homotopy equivalence, we obtain an embedding 
	\[
	\gamma_e:|\Gamma_{/e}|\to S
	\]
	which meets every boundary component a set of external half-edges  and is a homotopy equivalence. 
\end{const}

\begin{prop}\label{prop:internalcontractions}
	Let $(S,M)$ be a marked surface, $\phi:\coprod U_i\to S$ an open boundary parametrization, and $K_\bullet$ a $\phi$-admissible PLCW decomposition of $S$ with $M=K_0\setminus\partial S$. Let $\Gamma$ be the dual graph to $K_\bullet$,  let $e$ be an internal edge of $K_\bullet$, and let $\mathscr{M}$ an admissible marking of $K_\bullet$ with $s_e=0$ and with marking and orientation locally as in 
	Figure~\ref{fig:ElemMoves} (b). 
Let $T_\bullet$ be the PLCW decomposition and $\mathscr{M}^\prime$ the marking obtained from $K_\bullet$ and $\mathscr{M}$ via the move of 
Figure~\ref{fig:ElemMoves} (b). 
Then
	\begin{enumerate}
		\item\label{intcont:dualgraph} $\Gamma_{/e}$ can be identified with the dual graph of $T_\bullet$,
		\item\label{intcont:homotopy} $\gamma_e:|\Gamma_{/e}|\to S\setminus M$ is homotopic to the embedding induced by $T_\bullet$,
		\item\label{intcont:LambdarStruct} the $\Lambda_r$-structure induced on $\Gamma_{/e}$ by $\mathscr{M}^\prime$ is the structured contraction of the $\Lambda_r$-structure on $\Gamma$ induced by $\mathscr{M}$.
	\end{enumerate} 
\end{prop}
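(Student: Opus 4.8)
We keep the notation of the proposition. Write $\sigma_1,\sigma_2\in K_2$ for the two $2$-cells adjacent to the internal edge $e$, with $\sigma_1$ an $(m_1+1)$-gon and $\sigma_2$ an $(m_2+1)$-gon, and let $\sigma$ be the $2$-cell of $T_\bullet$ into which the move of Figure~\ref{fig:ElemMoves}~(b) merges them; then $\overline{\sigma}=\overline{\sigma_1}\cup\overline{\sigma_2}$ and $\sigma$ is an $(m_1+m_2)$-gon. Let $v_1,v_2$ be the vertices of $\Gamma$ dual to $\sigma_1,\sigma_2$, write $\Gamma_T$ for the dual graph of $T_\bullet$, and let $v\in\Gamma_T$ be dual to $\sigma$, so that under Construction~\ref{const:StructuredGraphFromPLCW} one has $F_K(v_1)=[m_1]_r$, $F_K(v_2)=[m_2]_r$ and $F_T(v)=[m_1+m_2-1]_r$. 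The plan is to treat the three assertions in order, each as a local matter on $\overline{\sigma}$.

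For part~\ref{intcont:dualgraph}: the move deletes the $1$-cell $e$ and replaces $\sigma_1,\sigma_2$ by $\sigma$, leaving every other cell of $K_\bullet$ unchanged. Under the dictionary ``vertices $=$ $2$-cells, half-edges $=$ incident (2-cell,\,edge) pairs, edges of the dual graph $=$ $1$-cells not in $\partial S$'', this is exactly the contraction of the edge $e=\{h,h'\}$ of $\Gamma$ joining $v_1$ and $v_2$: the two half-edges comprising $e$ are removed, $v_1$ and $v_2$ are identified, and the half-edges at the resulting vertex form the set $(H_{v_1}\setminus\{h'\})\sqcup(H_{v_2}\setminus\{h\})$, which is precisely the edge set of $\sigma$. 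Boundary edges of $K_\bullet$, and hence the external half-edges of $\Gamma$, are untouched. I would write out this bijection on $V$, $H$ and $E$ and check compatibility with $s$ and $\eta$, obtaining an isomorphism $\Gamma_{/e}\cong\Gamma_T$ fixing $\partial\Gamma$.

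For part~\ref{intcont:homotopy}: choose a closed disk $D$ which is a regular neighbourhood of $\overline{\sigma}$. Both $\gamma_T$ and $\gamma_e$ agree with $\gamma_K$ on the complement of $D$ --- for $\gamma_T$ because $T_\bullet = K_\bullet$ there, and for $\gamma_e=\gamma_K\circ W_e$ because the homotopy inverse $W_e$ recalled in the construction preceding the proposition is the identity away from the subcomplex $v_1\leftarrow e\rightarrow v_2$ and sends the star at $v$ into a neighbourhood of that subcomplex, hence into $D$. Inside $D$ each of $\gamma_T|_D$ and $\gamma_e|_D$ is an embedding of the tree consisting of the single vertex $v$ together with the edges and half-edges of $\sigma$ emanating to $\partial D$; they meet $\partial D$ in the same points and, since the cyclic order at $v$ in $\Gamma_{/e}$ is the boundary cyclic order of $\sigma$, they induce the same cyclic order. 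Any two such tree-embeddings into a disk are isotopic rel $\partial D$, so $\gamma_e$ is isotopic, in particular homotopic, to the embedding induced by $T_\bullet$.

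For part~\ref{intcont:LambdarStruct}: since $\mathscr{M}'$ agrees with $\mathscr{M}$ on all cells other than $\sigma_1,\sigma_2,e$, the structures $F_T$ and $F_K$ of Construction~\ref{const:StructuredGraphFromPLCW} agree on every object and morphism not involving $v_1,v_2,e$, and I may take $\tilde{\rho}^e$ to be the identity there. At the merged vertex I must produce components $\tilde{\rho}^e_{v_1}\colon[m_1+m_2-1]_r\to[m_1]_r$ and $\tilde{\rho}^e_{v_2}\colon[m_1+m_2-1]_r\to[m_2]_r$: these are the face-type morphisms of $\Lambda_r$ determined by the marked edge of $\sigma$ and the identifications $\mu_v,\mu_{v_1},\mu_{v_2}$, i.e.\ suitable composites of the $\psi^{i,j}_k$ and $\theta^{i,j,k}_\ell$ of Appendix~\ref{app:Lambda-r}. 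Using the hypotheses $s_e=0$ and ``local form as in Figure~\ref{fig:ElemMoves}~(b)'' to cancel the rotation twists $\tau^{-(n+1)s_e}$ in Construction~\ref{const:StructuredGraphFromPLCW}, one then checks that for every edge or half-edge $u$ incident to $v$ one has $F_T(v\to u)=F_K(v_i\to u)\circ\tilde{\rho}^e_{v_i}$ (so $\tilde{\rho}^e$ is natural and lifts $\rho^e$), and that after applying $\lambda_r$ and the natural isomorphisms the square~\eqref{diag:structcont} becomes the set-level pullback of $H_{v_1}\to\{h,h'\}\leftarrow H_{v_2}$, which by part~\ref{intcont:dualgraph} is $H_v$. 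By uniqueness of the structured contraction up to unique isomorphism, this yields part~\ref{intcont:LambdarStruct}. The main obstacle is exactly this last verification --- that the square is a pullback \emph{in $\Lambda_r$} and not merely on underlying sets --- which requires unwinding the $r$-cyclic face maps $\psi^{i,j}_k$, $\theta^{i,j,k}_\ell$ and the rotations $\tau$ and tracking how $s_e=0$ and the chosen local markings make the twists cancel. This is bookkeeping of the same flavour as in the proof of Proposition~\ref{prop:fixedmovesandIsos}, and I would likewise carry it out in Appendix~\ref{app:appendixproofs}.
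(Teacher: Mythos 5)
Your proof is correct and takes essentially the same approach as the paper's: the paper declares parts~\ref{intcont:dualgraph} and~\ref{intcont:homotopy} immediate from the definitions of $|\Gamma_{/e}|$ and $W_e$, and defers part~\ref{intcont:LambdarStruct} to a direct computation in $\Lambda_r$ ``analogous to those in the proof of Proposition~\ref{prop:fixedmovesandIsos},'' which it leaves as an exercise --- you have unpacked parts~\ref{intcont:dualgraph} and~\ref{intcont:homotopy} more explicitly and correctly pinpointed the pullback verification in $\Lambda_r$ (via the $\theta^{i,j,k}_\ell$ morphisms and the cancellation of the $\tau^{-(n+1)s_e}$ twists from $s_e=0$) as the content of part~\ref{intcont:LambdarStruct}, while deferring the same bookkeeping to the appendix as the paper does.
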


\begin{proof}
	Statements \ref{intcont:dualgraph} and \ref{intcont:homotopy} are immediate from the definitions of $|\Gamma_{/e}|$ and $W_e$. Statement \ref{intcont:LambdarStruct} is checked by direct computation in the $r$-cyclic category, analogous to those in the proof of 
	Proposition~\ref{prop:fixedmovesandIsos}. 
	We leave these computations as an exercise to the reader.
\end{proof} 

\begin{cor}\label{cor:equivalence of models}
	Equivalence classes of $\Lambda_r$-structured graphs embedded in $S\setminus M$ under contraction and isomorphism are in bijection with equivalence classes of marked PLCW decompositions $K_\bullet$ of with $K_0\setminus \partial S=M$ under fixed and elementary PLCW moves fixing $K_0$. 
\end{cor}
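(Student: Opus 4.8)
The plan is to assemble the corollary out of Propositions~\ref{prop:fixedmovesandIsos} and~\ref{prop:internalcontractions}, viewing it as the statement that a single correspondence is compatible with two layers of equivalence relation. First I would fix the map from the PLCW side to the graph side: send a marked $\phi$-admissible PLCW decomposition $(K_\bullet,\mathscr{M})$ with $K_0\setminus\partial S=M$ to its dual graph $\Gamma_K$, embedded via $\gamma_K$, together with the $\Lambda_r$-structure produced by Construction~\ref{const:StructuredGraphFromPLCW}. By the Remark identifying embedded graphs with dual graphs of PLCW decompositions, as $K_\bullet$ ranges over all such decompositions the $\Gamma_K$ range, up to isomorphism of embedded graphs, over all graphs embedded in $S\setminus M$ as homotopy equivalences meeting every parametrized boundary component. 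Proposition~\ref{prop:fixedmovesandIsos} already says this construction descends to a bijection between $\Lambda_r$-structures on a \emph{fixed} $\Gamma_K$ modulo isomorphism and markings of the \emph{fixed} $K_\bullet$ modulo fixed moves, so the only genuinely new content is how the two remaining relations — elementary edge moves on the PLCW side and edge contractions on the graph side — interact.

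For that interaction I would invoke Proposition~\ref{prop:internalcontractions}: a type-(b) move removing an internal edge $e$ with $s_e=0$ and the standard local marking of Figure~\ref{fig:ElemMoves}(b) identifies the resulting decomposition with $\Gamma_{/e}$ and identifies the induced $\Lambda_r$-structure with the structured contraction. A general elementary edge move is brought into this normal form by first applying fixed moves — orientation reversals and marked-edge moves to produce the local configuration, and (since moving a marked edge transfers indices between the edges of a face) to arrange $s_e=0$ on the single contracted edge; because fixed moves already correspond to isomorphisms of $\Lambda_r$-structures, the composite still sends an elementary edge move to a contraction up to isomorphism. Conversely, every edge of $\Gamma_K$ joining two distinct vertices is dual to an internal $1$-cell of $K_\bullet$ which, after such fixed moves, satisfies the hypotheses of Proposition~\ref{prop:internalcontractions}, so every contraction of the $\Lambda_r$-structured graph is realized by an elementary edge move. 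Hence the PLCW$\to$graph map carries the equivalence relation generated by fixed and elementary edge moves (fixing $K_0$) into the one generated by isomorphism and contraction, and descends to a map of quotients.

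To finish I would build the inverse. Given a $\Lambda_r$-structured graph $(\Gamma,\tilde{A}_\Gamma,\mu)$ embedded in $S\setminus M$ as above, the Remark after Construction~\ref{const:StructuredGraphFromPLCW} furnishes a $\phi$-admissible $K_\bullet$ with $K_0\setminus\partial S=M$ together with an isomorphism $\Gamma\cong\Gamma_K$ compatible with the embeddings; transporting the $\Lambda_r$-structure and applying the inverse bijection of Proposition~\ref{prop:fixedmovesandIsos} yields a marking $\mathscr{M}$, and one checks as above that isomorphic structures produce markings related by fixed moves and contracted structures produce decompositions related by an edge move, so this descends to the quotients and is inverse to the PLCW$\to$graph map by construction. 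I expect the main obstacle to be the one remaining well-definedness point: showing that two $\phi$-admissible PLCW decompositions with $K_0\setminus\partial S=M$ whose dual graphs are isomorphic with homotopic embeddings are connected through decompositions \emph{of the same type} using only fixed moves and type-(b) elementary moves. The standard connectivity of PLCW decompositions under elementary moves also permits type-(a) vertex moves, which alter $K_0$; the point is that the standing constraint $K_0\setminus\partial S=M$ together with $\phi$-admissibility rigidifies the $0$-skeleton enough that such vertex moves can be avoided, so that only type-(b) moves (and fixed moves) are ever needed. This, and the less subtle normalization arranging $s_e=0$ on a single edge by marked-edge moves rather than global deck transformations, are the two places where I would spell out the details most carefully.
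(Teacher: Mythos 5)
The paper states the corollary with no proof, presenting it as an immediate combination of Propositions~\ref{prop:fixedmovesandIsos} and~\ref{prop:internalcontractions}; your strategy --- dual-graph construction for the forward map, the remark preceding Construction~\ref{const:StructuredGraphFromPLCW} for surjectivity, Proposition~\ref{prop:fixedmovesandIsos} for fixed moves $\leftrightarrow$ isomorphisms, and Proposition~\ref{prop:internalcontractions} normalized by fixed moves for edge moves $\leftrightarrow$ contractions --- is precisely the assembly the authors evidently have in mind, and the forward direction of your argument is sound.

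Where I would push back is your treatment of the point you yourself flag as the main obstacle. You claim that the constraint $K_0 \setminus \partial S = M$ together with $\phi$-admissibility rigidifies the $0$-skeleton enough to dispense with type-(a) vertex moves. This is not quite right: those two conditions pin down only the \emph{interior} $0$-cells (which must equal $M$) and the $0$-cells lying on the parametrized cores (the endpoints of open cores and one vertex per closed core). On a surface with open parametrized boundary, the \emph{free} boundary arcs are unconstrained and may carry arbitrarily many additional $0$-cells. Since $\overline{K}_1$ excludes free boundary edges, such vertices and edges are invisible to the dual graph $\Gamma_K$ and to the marking data; yet two $\phi$-admissible decompositions differing only in such free-boundary $0$-cells cannot be related by type-(b) moves (which never alter $K_0$) nor by fixed moves (which never alter $K_\bullet$). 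So the injectivity of the forward map on equivalence classes is not established by the rigidification you propose. To close the gap one either needs to normalize the free boundary (e.g.\ require each free boundary arc to be a single $1$-cell), or to read ``fixing $K_0$'' as fixing only $K_0 \setminus \partial S$, thereby admitting type-(a) moves on free boundary vertices. The paper is silent on this, so you were right to flag the issue, but the resolution you sketch does not settle it.
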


\begin{rmk}
	It is immediate from the definitions that the correspondence between $\Lambda_r$-structured graphs and marked PLCW decompositions respects gluing. The upshot of 
	Propositions~\ref{prop:fixedmovesandIsos} and \ref{prop:internalcontractions} 
is thus that the open sector of an open-closed $r$-spin topological field theory can be equivalently described in terms of marked PLCW decompositions or $\Lambda_r$-structured graphs. Since \cite{Stern:2016stft} already provides a classification in terms of the latter, these results allow us to import the relations among open generators into our description in terms of marked PLCW decompositions. 
\end{rmk}

	\section{Frobenius algebras}\label{sec:FrobAlg}

\subsection{Knowledgeable \texorpdfstring{$\Lambda_r$}{Lambda\_r}-Frobenius algebras}
Let $\Sc$ denote a strict symmetric monoidal category with tensor unit $\Ib$ and braiding $\sigma$.
Let $r\in\Zb_{\ge0}$ and consider objects $\{C_x\}_{x\in\Zb/r}$ in $\Sc$ 
together with morphisms
\begin{align}
	\mu_{x,y}&:C_x\otimes C_y\to C_{x+y-1}& \eta_1&:\Ib\to C_1
	\label{eq:C-morphisms-alg}\\
	\Delta_{x,y}&:C_{x+y+1}\to C_x\otimes C_y& \varepsilon_{-1}&:C_{-1}\to \Ib
	\label{eq:C-morphisms-coalg}\ .
\end{align}
We draw these morphisms as:
\begin{align}
	\tikzfig{C-morphisms}
	\label{eq:C-morphisms}
\end{align}
Note that, in our convention, string diagrams should be read as running from bottom to top. 
We furthermore define morphisms 
with $x\in\Zb/r$
\begin{align}
	\tikzfig{C-morphisms-Nx}\ .
	\label{eq:C-morphisms-Nx}
\end{align}

These morphisms are required to satisfy the following relations
for $x,y,z,w\in\Zb/r$
with $x+y-2=z+w$:
\begin{align}
	\tikzfig{C-morphisms-relations-alg}\ ,
	\label{eq:C-morphisms-relations-alg}\\
	\tikzfig{C-morphisms-relations-coalg}\ ,
	\label{eq:C-morphisms-relations-coalg}
\end{align}
\begin{align}
	\tikzfig{C-morphisms-relations-Frobenius}\ ,
	\label{eq:C-morphisms-relations-Frobenius}
\end{align}
\begin{align}
  \tikzfig{C-morphism-relations-deck}\ ,
  \label{eq:C-morphism-relations-deck}
\end{align}

\begin{align}
	\tikzfig{C-morphisms-relations-commutativity}\ ,
	\label{eq:C-morphisms-relations-commutativity}
\end{align}
\begin{align}
	\tikzfig{C-morphisms-relations-twist}\ ,
	\label{eq:C-morphisms-relations-twist}
\end{align}
and finally
\begin{align}
	\tikzfig{C-morphisms-relations-SL2Z}\ .
	\label{eq:C-morphisms-relations-SL2Z}
\end{align}
\begin{defn} \label{def:closed-Lambda-r-FA}
	We call the collection of objects $C=\left\{ C_x \right\}_{x\in\Zb/r}$
	in $\Sc$ with morphisms in \eqref{eq:C-morphisms-alg} and
	\eqref{eq:C-morphisms-coalg} satisfying 
	\eqref{eq:C-morphisms-relations-alg}--\eqref{eq:C-morphisms-relations-SL2Z}
	a \textsl{closed $\Lambda_r$-Frobenius algebra in $\Sc$}.
\end{defn}
\begin{rmk}
	We note that the Frobenius relation together with unitality and counitality imply associativity and coassociativity.
	\label{rem:FR-unit-counit}
\end{rmk}
The following proposition can be shown in a similar way as one shows that the
Nakayama automorphism of a Frobenius algebra is an algebra and coalgebra automorphism \cite{Fuchs:2008fa}.
\begin{prop}
For a closed $\Lambda_r$-Frobenius algebra the automorphisms $N_x:C_x\to C_x$ endow the objects with a $\Zb/r$-action
and its structure morphisms intertwine this action.
\end{prop}

\begin{prop}\label{prop:duals-cat-dimension}
	Let $C$ be a closed $\Lambda_r$-Frobenius algebra.
	\begin{enumerate}
		\item 
	The objects $C_x$ and $C_{-x}$ are dual to each other.
		\item If $\mathrm{gcd}(x,r)=\mathrm{gcd}(y,r)$ then 
	the categorical dimensions of the objects $C_x$ and $C_y$ are the same. 
	\end{enumerate}
\end{prop}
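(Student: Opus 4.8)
The plan is to prove the two statements in turn, with the second resting on the first.

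\textbf{(1) Duality.} I would write down explicit (co)evaluation maps. Tracking the index shifts, the composites $\mathrm{ev}:=\varepsilon_{-1}\circ\mu_{-x,x}\colon C_{-x}\otimes C_x\to C_{-1}\to\Ib$ and $\mathrm{coev}:=\Delta_{x,-x}\circ\eta_1\colon\Ib\to C_1\to C_x\otimes C_{-x}$ are well defined (since $(-x)+x-1=-1$ and $x+(-x)+1=1$), and I claim they exhibit $C_{-x}$ as a dual of $C_x$. The two zig-zag identities follow from the Frobenius relation \eqref{eq:C-morphisms-relations-Frobenius} together with the unit and counit relations in \eqref{eq:C-morphisms-relations-alg}--\eqref{eq:C-morphisms-relations-coalg}, exactly as for an ordinary Frobenius algebra: expanding $(\id_{C_x}\otimes\mathrm{ev})\circ(\mathrm{coev}\otimes\id_{C_x})$, one uses the Frobenius relation to rewrite the middle composite $(\id_{C_x}\otimes\mu_{-x,x})\circ(\Delta_{x,-x}\otimes\id_{C_x})$ as $\Delta_{x,-1}\circ\mu_{1,x}$, and then collapses $\mu_{1,x}\circ(\eta_1\otimes\id_{C_x})$ and $(\id_{C_x}\otimes\varepsilon_{-1})\circ\Delta_{x,-1}$ to identities by (co)unitality; the other zig-zag is symmetric. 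Since $\mu_{-x,x},\Delta_{x,-x},\eta_1,\varepsilon_{-1}$ all intertwine the $\Zb/r$-actions, the non-degeneracy of $\mathrm{ev}$ identifies $N_{-x}$ with the inverse-transpose of $N_x$, so this is in fact a duality in $\Rep_{\Sc}(\Zb/r)$. This step is pure bookkeeping and I do not expect difficulties.

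\textbf{(2) Dimensions.} Duality already gives $\dim C_x=\dim C_{-x}$, since dual objects have equal categorical dimension in a symmetric monoidal category. For the general case I would represent $\dim C_x=\varepsilon_{-1}\circ\mu_{-x,x}\circ\sigma_{C_x,C_{-x}}\circ\Delta_{x,-x}\circ\eta_1\in\End_{\Sc}(\Ib)$ as a closed string diagram --- morally the invariant of an ``$r$-spin torus with parameter $x$'' --- and then use the twist relation \eqref{eq:C-morphisms-relations-twist}, which expresses the $\Zb/r$-action $N_x$ (the Dehn twist) in terms of the other structure morphisms, together with \eqref{eq:C-morphisms-relations-SL2Z}, which encodes the $SL_2(\Zb)$-relations of the torus mapping class group, to transform this diagram into the corresponding diagram with parameter $x'$ for any $x'$ in the $SL_2(\Zb)$-orbit of $x$ under the standard action on $r$-spin structures of the torus. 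One then invokes the orbit classification: two parameters are $SL_2(\Zb)$-equivalent precisely when $\gcd(x,r)=\gcd(y,r)$ --- provable directly by a Euclidean-algorithm argument that realizes the necessary index moves via $S$ and $T$, or imported from the torus computation in \cite{Runkel:2018:rs}. Hence $\gcd(x,r)=\gcd(y,r)$ forces $\dim C_x=\dim C_y$.

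The main obstacle is the translation in (2): verifying that the effect of \eqref{eq:C-morphisms-relations-twist} and \eqref{eq:C-morphisms-relations-SL2Z} on the closed diagram is exactly the standard $SL_2(\Zb)$-action on the parameter, with all the index shifts (the $-1$ in $\mu_{x,y}\to C_{x+y-1}$, the $+1$ in $\Delta_{x,y}\colon C_{x+y+1}\to C_x\otimes C_y$, and the insertions of $N_x$) correctly accounted for. If one prefers not to redo the genus-one bookkeeping from scratch, it can be cited from \cite{Runkel:2018:rs} at the cost of matching conventions; either way this is where the content lies. Everything else --- the explicit (co)evaluations, the zig-zags, and the orbit count --- is routine.
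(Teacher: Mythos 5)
Your proposal is correct and follows essentially the same route as the paper: for (1), the paper also takes $\mathrm{ev}=\varepsilon_{-1}\circ\mu_{-x,x}$ and $\mathrm{coev}=\Delta_{x,-x}\circ\eta_1$ and verifies the zig-zags via the Frobenius relation together with (co)unitality; for (2), the paper also writes $\dim C_x$ as a closed torus diagram $T(x,x)$, derives $T(x,z)=T(x,x+z)=T(x+z,z)$ from the twist and $SL_2(\Zb)$ relations, and applies the Euclidean algorithm to reduce to $T(\gcd(x,r),0)$. The only difference is one of exposition: where you appeal somewhat abstractly to ``the $SL_2(\Zb)$-orbit classification,'' the paper makes the recursion and the reduction $T(x,z)=T(\gcd(x,z,r),0)$ explicit, which is exactly the bookkeeping you flag as the remaining content.
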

\begin{proof}
	In fact the morphisms 
	\begin{align}
		\tikzfig{C-duality-morphisms}
		\label{eq:C-duality-morphisms}
	\end{align}
	are duality morphisms for $C_x$ and $C_x^{\vee}=C_{-x}$.
	This can be seen using the Frobenius relation \eqref{eq:C-morphisms-relations-Frobenius}
	and unitality \eqref{eq:C-morphisms-relations-alg} and counitality 
	\eqref{eq:C-morphisms-relations-coalg}.

	The categorical dimension  of an object $C$ in a symmetric monoidal category
	can be computed as $\mathrm{dim}(C)=\mathrm{ev_C}\circ\sigma_{C,C}\circ\mathrm{coev_C}$, 
	so for $C_x$ we have
	\begin{align}
		\tikzfig{cat-dim}
		\label{eq:cat-dim}
	\end{align}
	where we have used commutativity \eqref{eq:C-morphisms-relations-commutativity}.

	Let us write 
	\begin{align}
		\tikzfig{torus-xy}
		\label{eq:torus-xy}
	\end{align}
	in particular we have $T(x,x)=\mathrm{dim}(C_x)$.
	Using \eqref{eq:C-morphisms-relations-twist} and \eqref{eq:C-morphisms-relations-SL2Z} we obtain $T(x,z)=T(x,x+z)=T(x+z,z)$,
	and using these relations we get via the Euclidean algorithm that
		$T(x,z)=T(\mathrm{gcd}(x,z,r),0)$ and hence
	\begin{align}
		\mathrm{dim}(C_x)=
		T(x,x)=T(\mathrm{gcd}(x,r),0)\stackrel{\text{(assumption)}}{=}
		T(\mathrm{gcd}(y,r),0)= T(y,y)=\mathrm{dim}(C_y)\ .
		\label{eq:equal-dim-Cx}
	\end{align}

\end{proof}
\begin{rem} \label{rem:cat-dim-SVect}
	In \cite[Sec.\,2.6]{Moore:2006db} the special case
	of $r=2$ and $\Cc=\SVect$ is discussed. There it is assumed that
	the $\Zb/2$-grading by the eigenvalues of $N_x$ coincides with
	the grading of vector spaces in $\SVect$.
	This implies that $C_1$ is purely even, however $C_0$ may have even and odd components.
	Furthermore $\mu_{0,0}$ 
	is commutative and the dimensions of the vector spaces $C_0$ and $C_1$ 
	(but not their categorical dimension!) agree.
\end{rem}

\begin{defn}
	\label{def:knowledgable-lambda-r-frobenius-algebra}
	 A \textsl{$\Lambda_r$-Frobenius algebra} $A\in\Sc$ is a Frobenius algebra in $\Sc$ such that its Nakayama automorphism $N_A$ satisfies $N_A^r=\id_A$ \cite{Dyckerhoff:2015csg}. We denote the structure maps of $A$ by 
	 \begin{equation}
	 \tikzfig{A-morphisms}
	 \label{eq:A-morphisms}
	 \end{equation}
	 Where $N$, the \textsl{Nakayama automorphism} of $A$, is defined in terms of the previous structure morphisms.
	
	A \textsl{ knowledgeable $\Lambda_r$-Frobenius algebra $(A,C)$ in $\Sc$} consists of the following data.
	\begin{itemize}
		\item A $\Lambda_r$-Frobenius algebra $A\in\Sc$ with structure maps $\mu,\eta,\Delta,\varepsilon$ (no subscript) as above,
		\item a closed $\Lambda_r$-Frobenius algebra $C$ in $\Sc$,
		\item two morphisms $\iota_x\in\Sc(C_x,A)$ and $\pi_x=\iota_x^*\in\Sc(A,C_x)$ for every $x\in\Zb/r$,
			\begin{align}
				\tikzfig{AC-morphisms}
				\label{eq:AC-morphisms}
			\end{align}
	\end{itemize}
	These morphisms are required satisfy the following conditions. For every $x,y,z\in\Zb/r$
	\begin{align}
		&\tikzfig{AC-rel-knowledge}
		&&\text{(knowledge)}
                \label{eq:knowledge}\ ,\\
		&\tikzfig{AC-rel-duality}
                &&\text{(duality)}\label{eq:duality}\ ,\\
		&\tikzfig{AC-rel-Cardy}
		&&\text{(Cardy condition)}\label{eq:Cardy}\ .
	\end{align}
\end{defn}
\begin{rem}\label{rem:spin-tft-MS}
In the case when $r=2$, a $\Lambda_r$-Frobenius algebra is a $\Zb/2$-graded Frobenius algebra.
If furthermore $\Sc=\SVect$ and the grading of $C$ coincides with the grading in $\SVect$, then we recover the 
Frobenius algebras discussed in \cite[Sec.\,3.4]{Moore:2006db}, which describe open-closed (2-)spin TFTs.
This characterization first appeared in \cite{Lazaroiu:2001oc}.
\end{rem}
\begin{rem}
	A knowledgeable Frobenius algebra as defined in \cite{Lauda:2008oc}
	is the same as a knowledgeable $\Lambda_1$-Frobenius algebra.
	Note the absence of relations 
	\eqref{eq:C-morphisms-relations-twist} and \eqref{eq:C-morphisms-relations-SL2Z}
	in this case.
	\label{rem:kn-Lr-FA-comparison}
\end{rem}

\subsection{\texorpdfstring{$\Zb/r$}{Z\_r}-graded center of \texorpdfstring{$\Lambda_r$}{Lambda\_r}-Frobenius algebras}

We briefly recall the notion of the $\Zb/r$-graded center $Z^r(A)$ of a $\Lambda_r$-Frobenius algebra $A$ which satisfies that $\mu\circ\Delta$ is invertible.
Let $\Sc$ be idempotent complete,
$\zeta:=(\mu\circ\Delta)^{-1}$ and 
\begin{equation}
	\tikzfig{idempot-Px}
	\label{eq:idempot-Px}
\end{equation}
for $x\in\Zb/r$, which is an idempotent and hence splits as $P_x=\tilde{\iota}_x\circ\tilde{\pi}_x$
for an object $C_x\in\Sc$ and morphisms $\tilde{\iota}_x:C_x\to A$ and $\tilde{\pi}_x:A\to C_x$.
We define the morphisms in \eqref{eq:AC-morphisms} and \eqref{eq:A-morphisms} 
to be $\iota_x:=\mu\circ\Delta\circ\tilde{\iota}_x$ and $\pi_x:=\tilde{\pi}_x$.
We furthermore introduce
\begin{equation}
	\tikzfig{graded-center-morphisms}
	\label{eq:graded-center-morphisms}
\end{equation}
where $z=x+y-1$ and $w=x+y+1$.
\begin{defn}
	We write $Z^r(A):=\left\{ C_x \right\}_{x\in\Zb/r}$ and call it
	together with the structure morphisms in \eqref{eq:graded-center-morphisms}, $\iota_x$ and $\pi_x$
	the $\Zb_r$-graded center algebra of $A$.
\end{defn}

\begin{rmk}
	In \cite[Rem.\,3.9]{Brunner2014:bcp} a similar structure has been observed on the state spaces of a generalized orbifold TFT, though the convention for the grading differs somewhat. 
	This is not surprising, as any Frobenius algebra $A$ carries a $\Zb$-action on itself generated by its Nakayama automorphism, 
	and if $A$ satisfies that $\mu\circ\Delta$ is invertible one can define its $\Zb$-graded center.
\end{rmk}

The following proposition follows directly from 
\cite[Sec.\,3.2]{Runkel:2018:rs}:
\begin{prop}
	The pair $(A,Z^r(A))$ together with
	structure morphisms \eqref{eq:idempot-Px}
	is a knowledgeable $\Lambda_r$-Frobenius algebra.
\end{prop}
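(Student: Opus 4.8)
The plan is to check, one by one, the conditions of Definition~\ref{def:knowledgable-lambda-r-frobenius-algebra} for the pair $(A,Z^r(A))$ with transfer morphisms $\iota_x=\mu\circ\Delta\circ\tilde\iota_x$ and $\pi_x=\tilde\pi_x$, using only the Frobenius structure of $A$, the relation $N_A^r=\id_A$, the invertibility of $\mu\circ\Delta$ with inverse $\zeta$, and the splitting $P_x=\tilde\iota_x\circ\tilde\pi_x$. The largest part of the statement --- that $Z^r(A)=\{C_x\}_{x\in\Zb/r}$ together with the structure maps \eqref{eq:graded-center-morphisms} is a \emph{closed} $\Lambda_r$-Frobenius algebra, i.e.\ satisfies \eqref{eq:C-morphisms-relations-alg}--\eqref{eq:C-morphisms-relations-SL2Z} --- is precisely the content of \cite[Sec.\,3.2]{Runkel:2018:rs}: the $P_x$ are idempotents, $\mu_{x,y}$ and $\Delta_{x,y}$ land in the correct graded pieces thanks to $N_A^r=\id_A$, the induced endomorphism $N_x$ of $C_x$ satisfies $N_x^r=\id$ (so $C_x\in\Rep_\Sc(\Zb/r)$), and the twist and $SL_2(\Zb)$ relations hold. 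So what remains is the three conditions coupling $A$ to $Z^r(A)$: knowledge, duality, and the Cardy condition.

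First I would record the routine facts. Since $\pi_x=\tilde\pi_x$ and $\iota_x=(\mu\circ\Delta)\circ\tilde\iota_x$ with $\mu\circ\Delta$ self-adjoint for the Frobenius pairing of $A$, the maps $\iota_x$ and $\pi_x$ are mutually adjoint with respect to the Frobenius pairings on $A$ and on $C_x$, using $C_x^\vee=C_{-x}$ from Proposition~\ref{prop:duals-cat-dimension}; and they intertwine $N_x$ with the restriction of $N_A$. The \textbf{duality} relation \eqref{eq:duality} then follows by inserting $\zeta\circ(\mu\circ\Delta)=\id_A$ and collapsing both sides to the same diagram via the Frobenius relation for $A$. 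The \textbf{knowledge} relation \eqref{eq:knowledge} reduces to the diagrammatic identity $P_x\circ\mu=\mu\circ(P_x\otimes P_x)$ up to the grading shift $x+y-1$, which follows from the Frobenius relation for $A$ together with the explicit form of $P_x$ as a $\Zb/r$-average of twisted handle insertions built from $\mu,\Delta$ and powers of $N_A$.

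The hard part will be the \textbf{Cardy condition} \eqref{eq:Cardy}, the $r$-spin analogue of the Cardy--Lewellen constraint of \cite{Lauda:2008oc}: it equates a composite of the transfer maps $\iota_x,\pi_x$, summed over the relevant part of the $\Zb/r$-grading, with a twisted handle operator $\mu\circ(N_A^{k}\otimes\id)\circ\Delta$ on $A$. The strategy is to resolve $A$ through the splittings $P_x=\tilde\iota_x\circ\tilde\pi_x$, to observe that the $r$-spin handle operator decomposes as a sum over $x\in\Zb/r$ of such twisted handle operators restricted to the graded pieces, and to match the two sides term by term; this reproduces the genus-one computation of \cite[Sec.\,3.2]{Runkel:2018:rs} for the window/handle element, now read as a relation between the open and closed sectors. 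The only real subtlety is keeping the $\Zb/r$-bookkeeping (the shifts $z=x+y-1$, $w=x+y+1$ in \eqref{eq:graded-center-morphisms}) and the orientation conventions of the string calculus consistent; once the gradings are aligned, each of the three identities is a finite manipulation of Frobenius-algebra diagrams.
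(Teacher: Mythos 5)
Your proposal correctly identifies the structure of the argument: the paper's own ``proof'' is a one-line citation to \cite[Sec.\,3.2]{Runkel:2018:rs}, where the closed $\Lambda_r$-Frobenius algebra axioms for $Z^r(A)$ and the requisite identities for the idempotents $P_x$ and transfer maps are established; what remains to be spelled out are the knowledge, duality, and Cardy relations coupling $A$ to $Z^r(A)$. Your sketches of duality (using $\zeta\circ\mu\circ\Delta=\id_A$ and the Frobenius relation to account for the asymmetric definitions $\iota_x=\mu\circ\Delta\circ\tilde\iota_x$, $\pi_x=\tilde\pi_x$) and Cardy (matching a composite of the transfer maps against a twisted handle operator via the splitting $P_x=\tilde\iota_x\circ\tilde\pi_x$) are in the right spirit.

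There is, however, a genuine gap in your reduction of the knowledge condition \eqref{eq:knowledge} to the identity $P_x\circ\mu=\mu\circ(P_x\otimes P_x)$. As the paper's verification of the very same relation in $\Bord{r}$ makes explicit --- the ``knowledge about the center'' step \eqref{eq:Bord-rel-center} in the proof that $(I,\{S^1_x\}_{x\in\Zb/r})$ is a knowledgeable $\Lambda_r$-Frobenius algebra --- the knowledge relation is a \emph{twisted centrality} statement: it asserts, diagrammatically, that $\iota_x(C_x)$ commutes with $A$ up to an appropriate power of the Nakayama automorphism, and as such it necessarily involves the symmetry $\sigma_{A,A}$ of $\Sc$. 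Your proposed identity $P_x\circ\mu=\mu\circ(P_x\otimes P_x)$ contains no braiding at all; it is a multiplicativity statement about the idempotents (what makes $\mu_{x,y}:C_x\otimes C_y\to C_{x+y-1}$ well-defined), which belongs to the RS18 ingredients already assumed, not to the new open-closed constraint. The identity you actually need is of the shape $\mu\circ(P_x\otimes\id_A)=\mu\circ\sigma_{A,A}\circ(\id_A\otimes P_x)\circ(N_A^{\,?}\otimes\id_A)$, and this does follow from the Frobenius relation together with the explicit sum-over-$\Zb/r$ form of $P_x$ in \eqref{eq:idempot-Px} --- but it is a different diagram from the one you wrote, and your proposal as stated would never touch the braiding that the knowledge condition is really about.
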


\subsection{Examples}
We recall an example from \cite[Sec.\,4.1]{Runkel:2018:rs}.
Let us assume that $r$ is even, $k$ is a field of characteristic not 2
and let $A:=\Cl(1)=k\oplus k\theta\in\SVect$
be the Clifford algebra with $\theta$ odd and satisfying $\theta^2=1$.
With the Frobenius form $\varepsilon(1)=\frac{1}{2}$, $\varepsilon(\theta)=0$,
$A$ is a $\Lambda_r$-Frobenius algebra with $N^2=1$ and with window element 1. 
Its $\Zb/r$-graded center is given by $C_x=k\theta^{1-x}$ 
with the projection and embedding of the summand.

	\section{Topological field theories on open-closed \texorpdfstring{$r$}{r}-spin surfaces} \label{sec:TFTclass}
	We fix $\in\Rb_{\le0}$.
\subsection{A knowledgeable \texorpdfstring{$\Lambda_r$}{Lambda\_r}-Frobenius algebra in \texorpdfstring{$\Bord{r}$}{Bord\^{}r,oc}}

A set of generators of $\Bord{r}$ can be obtained by considering a set of generators of the open-closed oriented bordism category \cite{Lazaroiu:2001oc,Moore:2006db,Lauda:2008oc} and considering all $r$-spin structures on them. Every $r$-spin structure on a fixed generator of the oriented bordism category can be written as a composition of cylinders with $r$-spin structures and the generator with a single fixed $r$-spin structure. This latter observation makes the list of generators obviously shorter. In order to fix an $r$-spin structure on each generator of the oriented bordism category, we introduce a fixed marked PLCW decomposition of them, which we present now.

The generators of the closed sector (including the identity morphism) are:
\begin{align}
	&\tikzfig{Bord-gen-pants},
	&&\tikzfig{Bord-gen-copants},
	&&\tikzfig{Bord-gen-cup},
	&&\tikzfig{Bord-gen-cap},
	&&\tikzfig{Bord-gen-cylinder}\ ,
	\label{eq:Bord-gen-closed}
\end{align}
where $x,y\in\Zb/r$, $z=x+y-1$ and $w=x+y+1$ are the boundary labels.
The symbols $S^1_x$, etc.\ refer to the objects in the bordism category,
e.g.\ the symbol $S^1_x~S^1_y$ refers to $X=\{1,2\}\xrightarrow{\rho}\Zb/r$ 
with $\rho_1=x$ and $\rho_2=y$.

We compute $N_x:S^1_x \to S^1_x$ using the generators:
\begin{equation}
	\tikzfig{Bord-comp-Nx}
	\label{eq:Bord-comp-Nx}
\end{equation}
In each step of the computation we have used the PLCW moves of 
Definition~\ref{defn:fixmoves} and Definition~\ref{defn:elementarymoves} 
in Section~\ref{subsec:PLCW} and the numbers refer to the numbers there.

The generators of the open sector (with the identity) are:
\begin{align}
	&\tikzfig{Bord-gen-o-pants},
	&&\tikzfig{Bord-gen-o-copants},
	&&\tikzfig{Bord-gen-o-cup},
	&&\tikzfig{Bord-gen-o-cap},
	&&\tikzfig{Bord-gen-o-cylinder}\ .
	\label{eq:Bord-gen-open}
\end{align}
Similarly to the computation in \eqref{eq:Bord-comp-Nx} one can show that
\begin{equation}
	\tikzfig{Bord-gen-o-N}\ .
	\label{eq:Bord-gen-o-N}
\end{equation}
Furthermore we need the two $r$-spin bordisms
\begin{align}
	\tikzfig{Bord-gen-zip}\quad\text{and}\quad
	\tikzfig{Bord-gen-cozip}.
	\label{eq:Bord-gen-open-closed}
\end{align}

\begin{rmk}
	Note that it would be possible (by applications of PLCW moves) to alter the morphisms in \ref{eq:Bord-gen-closed}, \ref{eq:Bord-gen-open} and \ref{eq:Bord-gen-open-closed} so that the choice of marked edge is more symmetric between, for example, the pants and copants. The convention above is chosen for ease of notational conventions for objects and morphisms. Changing the marked edge would also alter, e.g., the values $x$, $y$, and $w$. 
\end{rmk}

\begin{prop}
	The objects 
	$I,S_x^1\in\Bord{r}$ $(x\in\Zb/r)$
	together with the morphisms in \eqref{eq:Bord-gen-closed}, 
	\eqref{eq:Bord-gen-open} and \eqref{eq:Bord-gen-open-closed} 
	form a knowledgeable $\Lambda_r$-Frobenius algebra.
\end{prop}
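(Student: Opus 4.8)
The plan is to verify each defining relation of a knowledgeable $\Lambda_r$-Frobenius algebra directly on the listed bordisms, leaning on the fact that the corresponding oriented statement is already known. For every relation, the two composites of generators on its two sides are, as \emph{oriented} open-closed bordisms, equal by \cite{Lauda:2008oc}; so it suffices to check that the two $r$-spin structures one obtains agree up to an isomorphism respecting boundary parametrizations. By Proposition~\ref{prop:PLCWopenclosedisRSpin} this is the same as checking that the marked PLCW decompositions produced by gluing the fixed marked PLCW decompositions of the generators along the recipe of Construction~\ref{const:gluing} become equal after a finite sequence of fixed PLCW moves (Definition~\ref{defn:fixmoves}) and elementary PLCW moves. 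Each check is thus mechanical: glue, read off the induced edge orientations, marked edges and edge indices using the gluing rule $s_e=s^1_e+s^2_e+1$ of \eqref{eq:glueing-rule-c}, and then bring both sides to a common normal form using moves of type (1)--(3) together with deck transformations. I would also note at the outset that the relations make sense up to the coherence isomorphisms of the (non-strict) symmetric monoidal structure on $\Bord{r}$, which we suppress.

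For the closed sector, the relations \eqref{eq:C-morphisms-relations-alg}--\eqref{eq:C-morphisms-relations-commutativity} reduce, after cancelling the common oriented bordism, to short PLCW computations of exactly the type carried out in \cite[Sec.\,2.3]{Runkel:2018:rs}; commutativity \eqref{eq:C-morphisms-relations-commutativity} in particular uses the orientation-preserving self-diffeomorphism of the closed pair of pants swapping its two ingoing circles and the observation that it carries the fixed edge indices to equivalent ones. The genuinely $r$-spin relations are the twist relation \eqref{eq:C-morphisms-relations-twist} and the $SL_2(\Zb)$ relation \eqref{eq:C-morphisms-relations-SL2Z}: these encode, respectively, the effect of a Dehn twist about a non-separating curve on a once-punctured torus-type bordism and the modular relation between the two standard handle decompositions, and both are established by tracking the edge indices and deck transformations that appear, mirroring the holonomy computations in \cite[Lem.\,5.5]{Runkel:2018:rs}.

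For the open sector and the open-closed compatibilities I would use the dictionary of Section~\ref{sec:combmod}. By Corollary~\ref{cor:equivalence of models}, the open generators with their fixed markings correspond to $\Lambda_r$-structured graphs, and under this correspondence the statement that they assemble into a $\Lambda_r$-Frobenius algebra $A$ is precisely the classification of \cite{Stern:2016stft}; the condition $N_A^r=\id_A$ becomes the statement that the automorphism of the edge object of $\Lambda_r$ realizing the Nakayama automorphism has order dividing $r$, which is immediate. It then remains to verify the knowledge \eqref{eq:knowledge}, duality \eqref{eq:duality}, and Cardy \eqref{eq:Cardy} conditions, whose two sides involve the zip and cozip of \eqref{eq:Bord-gen-open-closed}. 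For each of these I would again strip off the underlying oriented identity of \cite{Lauda:2008oc} and compute the glued marked PLCW decompositions explicitly, checking equivalence under fixed and elementary moves.

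The main obstacle will be the Cardy condition. Its two sides present the same genus-zero surface with mixed boundary, but the $r$-spin data enters through several gluings of the zip, cozip, and the open (co)pants, so the edge-index bookkeeping is delicate; the crux is showing that the accumulated $+1$ shifts from \eqref{eq:glueing-rule-c} together with the deck transformations cancel against the boundary labels $z=x+y-1$ and $w=x+y+1$ built into the generators, yielding the prescribed morphism rather than one differing by a $\Zb/r$-shift. I expect this to be the one relation whose verification is long enough to be deferred to the appendix, with the closed-sector twist and $SL_2(\Zb)$ relations the secondary difficulty for the same label-tracking reasons.
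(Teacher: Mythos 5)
Your strategy coincides with the paper's: glue the fixed marked PLCW decompositions of the generators via the rule $s_e = s^1_e + s^2_e + 1$ of \eqref{eq:glueing-rule-c}, then reduce both sides of each relation to a common normal form using fixed and elementary PLCW moves; import the open-sector relations from \cite{Stern:2016stft} via the dictionary of Section~\ref{sec:combmod}; and recognize the twist relation \eqref{eq:C-morphisms-relations-twist} and the $SL_2(\Zb)$ relation \eqref{eq:C-morphisms-relations-SL2Z} as $r$-spin lifts of Dehn twists. You should add a line verifying that the closed-sector generators are $\Zb/r$-intertwiners, which the paper handles by observing that the $\Zb/r$-action is realized by deck transformations.

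The one tool you underuse relative to the paper is Corollary~\ref{cor:contract-edges-inside-disk}: the paper invokes it repeatedly (for unitality, counitality, (co)associativity, and the Frobenius relation) to collapse the glued marked PLCW decompositions to a radial one on an embedded disk, which is exactly the ``common normal form'' you gesture at without specifying. Your forecast of where the difficulty sits is also off. The paper dispatches knowledge \eqref{eq:knowledge}, Cardy \eqref{eq:Cardy}, and duality \eqref{eq:duality} in the main text with short pictures; the relation that requires the most care is \eqref{eq:C-morphisms-relations-SL2Z}, which is proved by bringing the torus-with-two-holes to the standard form of \cite{Runkel:2018:rs} and performing an explicit Dehn twist. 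Nothing from this proposition is deferred to the appendix---the appendix contains the $r$-cyclic computations behind Proposition~\ref{prop:fixedmovesandIsos}, not any of these verifications. None of this affects the soundness of your approach; it is the same method with misplaced emphasis.
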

\begin{proof}
	Recall from \eqref{eq:glueing-rule-c} 
	that when glueing two boundary components, we glue the edges with
	edge labels $s_\mathrm{in}$ and $s_\mathrm{out}$,
	assign the same orientation and the edge label $s_\mathrm{in}+s_\mathrm{out}+1$.

	We first deal with the closed sector.
	The generators being $\Zb/r$-intertwiners can be seen by 
	applying deck transformations.
	Unitality and counitality
	directly follow using Corollary~\ref{cor:contract-edges-inside-disk}.
	We check associativity, proving coassociativity can be done similarly.
\begin{align}
	\tikzfig{Bord-rel-associativity}
	\label{eq:Bord-rel-associativity}
\end{align}
	We have used that we are composing with the identity and we moved the marking to the middle loops.
	After removing the middle loops we can apply 
	Corollary~\ref{cor:contract-edges-inside-disk} 
	to see that the two compositions agree.

	We check the Frobenius relation (and we omit the identity morphisms for simplicity):
\begin{align}
	\tikzfig{Bord-rel-Frobenius}
	\label{eq:Bord-rel-Frobenius}
\end{align}
Again, we can remove the middle loops apply 
	Corollary~\ref{cor:contract-edges-inside-disk} 
	to see that the three compositions agree.

	Note that 
	\begin{align}
		\tikzfig{Bord-rel-N-power}\ .
		\label{eq:Bord-rel-N-power}
	\end{align}
	With this it is easy to see that
	relation \eqref{eq:C-morphisms-relations-twist} is just invariance under Dehn twists \cite[Fig.\,3.13.]{Szegedy:2018phd}.

	We check commutativity:

	\begin{align}
		\begin{aligned}
		\tikzfig{Bord-rel-commutativity}
		\end{aligned}
		\label{eq:Bord-rel-commutativity}
	\end{align}
	The labels over the equivalences correspond to the PLCW moves used.\footnote{Here, 2 and 3 are fixed PLCW moves, 4 are elementary moves, and 5 is the univalent move.}
	Finally we check relation \eqref{eq:C-morphisms-relations-SL2Z}:
	Consider the composition on the left hand side of \eqref{eq:C-morphisms-relations-SL2Z}:
	\begin{align}
		\tikzfig{Bord-rel-SL2Z}
		\label{eq:Bord-rel-SL2Z}
	\end{align}
	We have brought the marked PLCW decomposition to the standard form of
	\cite[Fig.\,3.7]{Szegedy:2018phd}, applying a Dehn twist along the loop labeled
	by $x$ gives the required result.
At this point we are done with the relations involving the closed sector only.

\medskip

The open sector has been treated in \cite{Stern:2016stft}.
One still needs to check that the morphisms in \eqref{eq:Bord-gen-open} 
represent the structure morphisms of a $\Lambda_r$-Frobenius algebra,
but we omit the computation here.

\medskip

We turn to the relations involving both the open and the closed sector.
We check ``knowledge about the center'' \eqref{eq:knowledge}:
\begin{align}
	\tikzfig{Bord-rel-center}
	\label{eq:Bord-rel-center}
\end{align}

Next we check the Cardy relations \eqref{eq:Cardy}:
\begin{align}
	\begin{aligned}
		\tikzfig{Bord-rel-Cardy}
	\end{aligned}
	\label{eq:Bord-rel-Cardy}
\end{align}

Finally we check the duality condition \eqref{eq:duality}. On one hand we have:
\begin{align}
	\begin{aligned}
		\tikzfig{Bord-rel-duality}
	\end{aligned}
	\label{eq:Bord-rel-duality}
\end{align}
Now computing $\varepsilon_1\circ\mu_{x,-x}$ we get:
\begin{align}
	\begin{aligned}
		\tikzfig{Bord-rel-pairing}
	\end{aligned}
	\label{eq:Bord-rel-pairing}
\end{align}

\end{proof}

\begin{rem}
	For the knowledgeable $\Lambda_r$-Frobenius algebra
        $(I,(S_x^1)_{x\in\Zb/r})$ 
	the relations \eqref{eq:C-morphisms-relations-twist} and
	\eqref{eq:C-morphisms-relations-SL2Z}
	are $r$-spin lifts of Dehn twists.
	The other defining relations are $r$-spin lifts of the defining relations
	of a knowledgeable Frobenius algebra, c.f.\ Remark~\ref{rem:kn-Lr-FA-comparison}.
	\label{rem:kn-Lr-FA-comparison-Bord}
\end{rem}

\subsection{Generators and relations description of \texorpdfstring{$\Bord{r}$}{Bord\^{}r,oc}}

\begin{theorem}
	The symmetric monoidal category $\Bord{r}$ is generated by the
	knowledgeable $\Lambda_r$-Frobenius algebra $(I,\left\{ S_x^1 \right\}_{x\in\Zb/r})$.
	\label{thm:Bord-r-gen-by-kn-Lr-FA}
\end{theorem}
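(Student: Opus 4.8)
The plan is to mimic the strategy of \cite{Lauda:2008oc} for oriented open-closed bordisms, using the combinatorial models of Section~\ref{sec:combmod} to transport the argument to $r$-spin surfaces. There are two things to establish: \emph{generation}, that every morphism of $\Bord{r}$ is a composite of tensor products of the generators \eqref{eq:Bord-gen-closed}, \eqref{eq:Bord-gen-open}, \eqref{eq:Bord-gen-open-closed}; and \emph{relations}, that any two such factorizations of a single morphism are connected by the defining relations of a knowledgeable $\Lambda_r$-Frobenius algebra, all of which hold in $\Bord{r}$ by the preceding proposition.

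For generation, I would start from an $r$-spin bordism $\Sigma\colon\rho\to\sigma$. Its underlying oriented $\langle2\rangle$-bordism decomposes, by \cite[Sec.\,3]{Lauda:2008oc}, into the oriented generators (pairs of pants, (co)units, cylinders, zip/cozip, and their open counterparts); fix a system of disjoint embedded circles and arcs cutting $\Sigma$ accordingly. Pick a $\phi$-admissible marked PLCW decomposition of $\Sigma$ (Proposition~\ref{prop:PLCWopenclosedisRSpin}) and refine it by elementary PLCW moves so that every cutting curve is a union of edges; then Corollary~\ref{cor:contract-edges-inside-disk}, applied to a disk neighborhood of each piece, lets us assume each piece carries the standard PLCW decomposition of the corresponding oriented generator. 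Each piece is thus an $r$-spin structure on an oriented generator, hence --- as noted at the start of Section~\ref{sec:TFTclass} --- equivalent to the chosen generator pre- and post-composed with $r$-spin cylinders adjusting boundary labels, and those cylinders are themselves generators; for the open pieces one may alternatively cite \cite{Stern:2016stft} via Corollary~\ref{cor:equivalence of models}. Re-assembling along the cutting curves with the gluing rule \eqref{eq:glueing-rule-c} exhibits $[\Sigma]$ as a composite of generators, so the subcategory generated is all of $\Bord{r}$.

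For the relations, suppose two composites of generators represent the same $[\Sigma]$; by the above each comes from a marked PLCW decomposition together with a choice of cutting system, and I would connect them in three stages. (a) With the cutting system and the underlying oriented picture fixed, two marked PLCW decompositions of the same $r$-spin surface are related by the fixed and elementary PLCW moves (Definition~\ref{defn:fixmoves}); exactly as in \cite[Sec.\,3]{Runkel:2018:rs}, orientation-reversal and marked-edge moves become the Frobenius and (co)associativity manipulations, deck transformations become the $\Zb/r$-intertwiner relations, and the elementary moves become (co)unitality. (b) Changing the cutting system while keeping $\Sigma$ fixed: on the underlying oriented bordism the two decompositions are related by the knowledgeable Frobenius relations of \cite{Lauda:2008oc} (knowledge, duality, Cardy, and the commutative Frobenius relations in the closed sector), which by Remarks~\ref{rem:kn-Lr-FA-comparison-Bord} and \ref{rem:kn-Lr-FA-comparison} lift to the relations \eqref{eq:C-morphisms-relations-alg}--\eqref{eq:C-morphisms-relations-commutativity} and \eqref{eq:knowledge}--\eqref{eq:Cardy} once the $r$-spin labels on the cutting curves are tracked. (c) Finally, two $r$-spin structures on the same oriented surface that are isotopic but not PLCW-equivalent differ by the mapping class group action, generated by Dehn twists; by Remark~\ref{rem:kn-Lr-FA-comparison-Bord} the twist relation \eqref{eq:C-morphisms-relations-twist} and the $SL_2(\Zb)$ relation \eqref{eq:C-morphisms-relations-SL2Z} are precisely the $r$-spin lifts of the Dehn twists needed to absorb this ambiguity in the closed sector, and the open sector contributes no new mapping classes beyond those already handled in \cite{Stern:2016stft}.

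The main obstacle is stage (c): one must verify that the twist and $SL_2(\Zb)$ relations, together with the oriented relations, generate \emph{all} identifications between factorizations arising from different $r$-spin structures, i.e.\ that no further relations are forced. This is the genuinely $r$-spin content, amounting to a handle-by-handle analysis of how an $r$-spin structure is assembled along the oriented decomposition and a matching of handle slides and Dehn twists against the listed relations; the closed-surface instance is \cite{Runkel:2018:rs} and the open instance \cite{Stern:2016stft}, and the work here is to interleave the two correctly across the zip/cozip generators \eqref{eq:Bord-gen-open-closed} without introducing spurious relations.
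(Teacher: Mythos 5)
Your generation argument matches the paper's: cut $\Sigma$ along the Lauda--Pfeiffer decomposition of the underlying oriented bordism, read off the $r$-spin labels near the cutting curves, and use Proposition~\ref{prop:PLCWopenclosedisRSpin} to pin down the $r$-spin cylinder factors. That part is fine.

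For the relations half, your three-stage breakdown is the right shape in outline, but stage~(c) is exactly where you stop, and it is the crux. You write that one must ``verify that the twist and $SL_2(\Zb)$ relations\ldots generate \emph{all} identifications'' and that the work ``amounts to a handle-by-handle analysis,'' but you do not say how to make that analysis finite or what the handles are. The paper supplies the missing structure as follows. Given $(\Sigma,q)\simeq(\Sigma',q')$ via $\phi$, bring both to a common standard form $\Sigma_\mathrm{std}$ via $\Psi,\Psi'$ as in \cite[Def.\,3.20]{Lauda:2008oc}. The lifts of $\Psi,\Psi'$ are handled by the $r$-spin lifts of the knowledgeable Frobenius relations of \cite[Thm.\,3.22]{Lauda:2008oc} (your stages (a)--(b) collapse into this step; there is no separate PLCW-moves-vs.-cutting-systems dichotomy once you pass through the standard form). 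What remains is the residual self-diffeomorphism $\tilde{\phi}$ of $\Sigma_\mathrm{std}$ defined by the square \eqref{eq:std-form-pres-r-spin-diffeo}. After composing with cylinders and permutations one may assume $\Sigma_\mathrm{std}$ has only circles as source and only intervals as target; then $\tilde{\phi}$ fixes the underlying surface and its decomposition, so it is a product of ($r$-spin lifts of) Dehn twists along the explicit finite generating family of loops in Figure~\ref{fig:loops}. The paper then checks these loops in three families: twists along boundary-parallel loops $b,e,\partial^\mathrm{O},\partial^\mathrm{C}$ come directly from \eqref{eq:C-morphisms-relations-twist}; twists along $h^\mathrm{O},h^\mathrm{C}$ and the like follow after inserting (co)associativity or the Frobenius relation and then applying \eqref{eq:C-morphisms-relations-twist}; and twists along the genus loops $d$ require the diffeomorphism isotopic to the identity from Figure~\ref{fig:pull-out-handle} followed by \eqref{eq:C-morphisms-relations-SL2Z}. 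This explicit reduction and the loop-by-loop check is the genuinely new $r$-spin content of the proof, and it is precisely the step your proposal flags but leaves open.

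One further framing issue: your stage~(c) speaks of ``$r$-spin structures\ldots isotopic but not PLCW-equivalent'' differing by mapping classes, but PLCW equivalence already encodes isomorphism of $r$-spin structures (Proposition~\ref{prop:PLCWopenclosedisRSpin}). The ambiguity one actually has to kill is the mapping class of the underlying $\langle 2\rangle$-manifold fixing the decomposition in standard form, i.e.\ the $\tilde{\phi}$ above --- not an isotopy-vs.-PLCW mismatch of $r$-spin structures.
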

\begin{proof}
	First we show that every $r$-spin bordism can be written as a composition of
	disjoint unions of generators in \eqref{eq:Bord-gen-closed}, \eqref{eq:Bord-gen-open} and \eqref{eq:Bord-gen-open-closed}. Let $\Sigma:\rho\to\sigma$ be an $r$-spin bordism and consider a decomposition $D$ of the underlying bordism as in \cite[Section 3.6]{Lauda:2008oc}. This decomposition 
	determines a set of circles and intervals along which we cut $\Sigma$. Looking at the isomorphism class of the $r$-spin structure near the circles (and intervals) determines
	objects in $\Bord{r}$ along which we compose disjoint unions of generators from
	\eqref{eq:Bord-gen-closed}, \eqref{eq:Bord-gen-open} and \eqref{eq:Bord-gen-open-closed} and a certain number of cylinders over circles and intervals:
	the last morphisms in \eqref{eq:Bord-gen-closed} and \eqref{eq:Bord-gen-open}.
	Restricting the $r$-spin structure on $\Sigma$ to each generator determines 
	the exact number of cylinders in the above composition via
	Proposition~\ref{prop:PLCWopenclosedisRSpin}.

	\medskip

	Next we show that for any two $r$-spin bordism representing the same morphism
	in $\Bord{r}$ there is an $r$-spin diffeomorphism built via the relations
	\eqref{eq:Bord-rel-commutativity}--\eqref{eq:Bord-rel-pairing}.
	
	Let $(\Sigma,q),(\Sigma',q'):\rho\to\sigma$ be two $r$-spin bordisms and
	let $\phi:\Sigma\to\Sigma'$ be a diffeomorphism such that $q'\simeq\phi^* q$.
	Let $\Psi:\Sigma\to\Sigma_\mathrm{std}$ be a diffeomorphism that brings
	$\Sigma$ to the standard form $\Sigma_\mathrm{std}$ described in
	\cite[Def.\,3.20]{Lauda:2008oc} and
	similarly define $\Psi':\Sigma'\to\Sigma_\mathrm{std}'=\Sigma_\mathrm{std}$.
	These define an $r$-spin diffeomorphism $\tilde{\phi}$ via the following commutative diagram:
	\begin{equation}
		\begin{tikzcd}
			(\Sigma,q)\ar{r}{\Psi}\ar{d}[swap]{\phi}&
			(\Sigma_\mathrm{std},\Psi^* q)\ar{d}{\tilde{\phi}}\\
			(\Sigma',q')\ar{r}{\Psi'}&
			(\Sigma_\mathrm{std},(\Psi')^* \phi^*q)\\
		\end{tikzcd}
		\label{eq:std-form-pres-r-spin-diffeo}
	\end{equation}
	and $\tilde{\phi}$ preserves the decomposition $\Sigma_\mathrm{std}$.
	The $r$-spin diffeomorphisms covering $\Psi$ and $\Psi'$
	are obtained from the relations 
	defining a knowledgeable $\Lambda_r$-Frobenius algebra
	as in \cite[Thm.\,3.22]{Lauda:2008oc}, since 
	those relations are $r$-spin lifts of the relations
	defining a knowledgeable Frobenius algebra,
	see Remark~\ref{rem:kn-Lr-FA-comparison-Bord}.
	So the only thing left to show is 
	that the $\tilde{\phi}$ is also obtained this way.

	It is enough to consider bordisms which have as source only circles
	and target only intervals, as any bordism can be transformed to this form
	by composing with cylinders and permutations \cite[Sec.\,3.6.2]{Lauda:2008oc}.
	Since $\tilde{\phi}$ does not change the underlying surface,
	it is a composition of ($r$-spin lifts of) Dehn twists along the loops
	shown in Figure~\ref{fig:loops}
	\begin{figure}[tb]
		\centering
		\includegraphics{./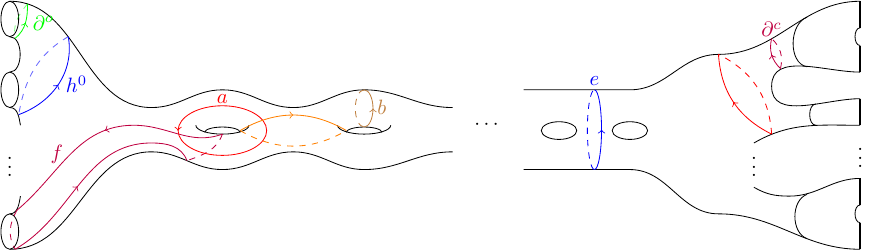}
		\caption{Loops on a bordism, along which Dehn twists generate its mapping class group.}
		\label{fig:loops}
	\end{figure}

	\begin{figure}[tb]
		\centering
		\includegraphics{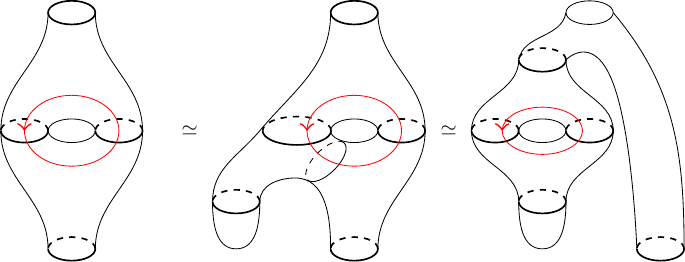}
		\caption{A diffeomorphism isotopic to the identity.}
		\label{fig:pull-out-handle}
	\end{figure}
	The Dehn twists along loops marked with 
	$b,e,\partial^\mathrm{O},\partial^\mathrm{C}$ can clearly be obtained
	from \eqref{eq:C-morphisms-relations-twist}.
	For the Dehn twists along the $d,h^\mathrm{O},h^\mathrm{C}$
	apply (co)associativity or the Frobenius relation and then 
	\eqref{eq:C-morphisms-relations-twist}.
	For the Dehn twists along the $d$ loops first consider the diffeomorphism
	in Figure~\ref{fig:pull-out-handle} which is isotopic to the identity. 
	Then apply relation \eqref{eq:C-morphisms-relations-SL2Z}.

\end{proof}
\begin{corollary}
	The symmetric monoidal categories $\Fun^{\otimes,\mathrm{symm}}\left( \Bord{r},\Sc \right)$ and $\Kn{r}{\Sc}$ are equivalent. The equivalence sends an open-closed $r$-spin topological field theory to the knowledgeable Frobenius algebra obtained by evaluation on the generators of (\ref{eq:Bord-gen-closed}) and (\ref{eq:Bord-gen-open}). 
\end{corollary}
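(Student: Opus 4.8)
The plan is to deduce the corollary formally from Theorem~\ref{thm:Bord-r-gen-by-kn-Lr-FA}, which we may read as asserting that $\Bord{r}$ is the free symmetric monoidal category containing a knowledgeable $\Lambda_r$-Frobenius algebra. First I would check that \textsl{evaluation on the generating bordisms} is a well-defined functor $\ev\colon \Fun^{\otimes,\mathrm{symm}}(\Bord{r},\Sc)\to \Kn{r}{\Sc}$: a symmetric monoidal $Z$ preserves tensor products, the unit, and the braiding, so the tuple consisting of $Z$ applied to the interval object, to the circles $S^1_x$, and to the generating bordisms of \eqref{eq:Bord-gen-closed}, \eqref{eq:Bord-gen-open} and \eqref{eq:Bord-gen-open-closed} satisfies all of \eqref{eq:C-morphisms-relations-alg}--\eqref{eq:C-morphisms-relations-SL2Z} and \eqref{eq:knowledge}--\eqref{eq:Cardy}, because those relations already hold among the generators by the preceding proposition that they form a knowledgeable $\Lambda_r$-Frobenius algebra in $\Bord{r}$; naturality in $Z$ makes $\ev$ a functor, and it is symmetric monoidal in the obvious way (this is precisely the assignment named in the statement). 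It then remains to prove $\ev$ is essentially surjective, full, and faithful.

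For essential surjectivity I would fix a knowledgeable $\Lambda_r$-Frobenius algebra $(A,C)$ in $\Sc$ and build $Z\colon\Bord{r}\to\Sc$: on objects send the interval object to $A$, $S^1_x$ to $C_x$, disjoint union to $\otimes$, and the empty object to $\Ib$; on morphisms use the first half of the proof of Theorem~\ref{thm:Bord-r-gen-by-kn-Lr-FA}, which writes every $r$-spin bordism as a composite of disjoint unions of the generators \eqref{eq:Bord-gen-closed}, \eqref{eq:Bord-gen-open}, \eqref{eq:Bord-gen-open-closed} together with cylinders, and assign to each generator the corresponding structure morphism of $(A,C)$ and to each cylinder the appropriate power of the relevant Nakayama-type automorphism. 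The only genuine point is independence of the chosen decomposition, and this is exactly the content of the second half of that proof: any two decompositions of the same morphism are connected by the relations governing knowledgeable $\Lambda_r$-Frobenius algebras, which hold in $(A,C)$ by hypothesis. One then checks $Z$ is symmetric monoidal (the symmetry of $\Bord{r}$ is the cylinder with swapped boundary parametrization, whose decomposition forces $Z$ to send it to $\sigma$), and by construction $\ev(Z)\cong(A,C)$.

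For fullness, given a morphism $\varphi\colon\ev(Z)\to\ev(Z')$ of knowledgeable $\Lambda_r$-Frobenius algebras, I would define a monoidal natural transformation $\tilde\varphi\colon Z\Rightarrow Z'$ by declaring its component at the interval object and at each $S^1_x$ to be the corresponding component of $\varphi$, at a general object the tensor product of these, and the identity at the empty object. Monoidality is then immediate, and the naturality square of $\tilde\varphi$ at each generating bordism is precisely one of the equations saying that $\varphi$ is a morphism of knowledgeable $\Lambda_r$-Frobenius algebras; since naturality squares are closed under composition and $\otimes$ and every bordism is built from generators, $\tilde\varphi$ is natural on all of $\Bord{r}$ with $\ev(\tilde\varphi)=\varphi$. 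Faithfulness is immediate: a monoidal natural transformation between symmetric monoidal functors on $\Bord{r}$ is determined by its components on the interval object and on the circles $S^1_x$, which are exactly the data that $\ev$ records. Combining the three properties yields the claimed equivalence.

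I expect the main obstacle to be bookkeeping rather than mathematics: one must carefully align the conventions of \cite{Lauda:2008oc} used in the proof of Theorem~\ref{thm:Bord-r-gen-by-kn-Lr-FA} (the standard forms of bordisms, the treatment of cylinders, the interval object carrying the label $*$) with the string-diagram conventions for $\Kn{r}{\Sc}$ fixed in Section~\ref{sec:FrobAlg}, so that ``generating bordism $\leftrightarrow$ structure morphism'' is literally the identification of data and not merely an identification up to reindexing of the labels $x,y,z,w$.
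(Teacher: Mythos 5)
Your proposal is correct, and it is essentially the argument the paper leaves implicit: the corollary is stated without proof precisely because Theorem~\ref{thm:Bord-r-gen-by-kn-Lr-FA} is meant to be read as a freeness statement, and the three verifications you outline (well-definedness of $\ev$, essential surjectivity via the existence half of the theorem's proof plus well-definedness from the relations half, fullness and faithfulness because every object of $\Bord{r}$ is a disjoint union of intervals and circles) are the standard consequences of freeness. The one place where you should be slightly more careful than you are is in your gloss ``assign to each cylinder the appropriate power of the relevant Nakayama-type automorphism'': over the open object the cylinder $I\times I$ with the bundle twist corresponds to the Nakayama automorphism $N$ of $A$, while over $S^1_x$ the cylinders realize $N_x$ on $C_x$, and one needs these already listed among the generators in \eqref{eq:Bord-gen-closed} and \eqref{eq:Bord-gen-open} so that $\ev$ records them; but that is already how the paper sets things up, so this is bookkeeping rather than a gap.
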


\begin{rmk}
  The invariant assigned to an $r$-spin torus by a TFT $\Zc$ corresponding to the 
  closed $\Lambda_r$-Frobenius algebra $\{C_x\}_{x\in\Zb/r}$ is $T(d,0)=\dim(C_d)$
  for some $d\in\Zb/r$, which can be chosen to be a divisor of $r$,
  c.f.\ Proposition~\ref{prop:duals-cat-dimension}.
  Diffeomorphism classes of $r$-spin tori are in bijection with 
  the set of divisors of $r$ (see e.g.\ \cite[Sec.\,3.5]{Szegedy:2018phd}),
  and precisely the above invariant is assigned by $\Zc$ 
  to the $r$-spin torus corresponding to $d$.
  \label{rem:torus-invariant}
\end{rmk}

	\subsection{State-sum \texorpdfstring{$r$}{r}-spin topological field theories}
The state sum construction of oriented open-closed TFTs \cite{Bachas:1993lat,Fukuma:1994sts,Lauda:2007oc,Davydov:2011dt} and the state sum construction of $r$-spin TFTs of \cite{Novak:2015phd,Runkel:2018:rs} can be easily united to a construction of open-closed $r$-spin TFTs. 
The construction takes as an input a $\Lambda_r$-Frobenius algebra $A\in\Sc$
with invertible window element and produces an open-closed $r$-spin TFT
$\Zc_A:\Bord{r}\to\Sc$.
The steps of the construction are essentially the same
as in \cite{Runkel:2018:rs} with the following differences:
\begin{itemize}
	\item The object assigned to the interval is the algebra itself.
	\item At free boundary edges we compose with the counit.
	\item At open boundary vertices we do not multiply with the inverse of the window element.
\end{itemize}
The following proposition directly follows from this construction.

\begin{proposition}
	The open-closed $r$-spin TFT $\Zc_A$ of the state sum construction 
	corresponds to the knowledgeable $\Lambda_r$-Frobenius algebra
	$(A,Z^r(A))$ given by $A$ and its $\Zb/r$-graded center.
	\label{prop:state-sum-graded-center}
\end{proposition}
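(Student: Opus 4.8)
The plan is to reduce the statement to the classification already established: by the corollary following Theorem~\ref{thm:Bord-r-gen-by-kn-Lr-FA}, an open-closed $r$-spin TFT is determined up to isomorphism by the knowledgeable $\Lambda_r$-Frobenius algebra obtained by evaluating it on the chosen generators \eqref{eq:Bord-gen-closed}, \eqref{eq:Bord-gen-open} and \eqref{eq:Bord-gen-open-closed}. Since the state-sum construction already produces a functor $\Zc_A:\Bord{r}\to\Sc$, it suffices to compute $\Zc_A$ on these generators and match the result with the structure maps of $(A,Z^r(A))$ appearing in \eqref{eq:idempot-Px}, \eqref{eq:graded-center-morphisms}, \eqref{eq:A-morphisms} and \eqref{eq:AC-morphisms}. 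I would carry this out sector by sector.

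First, the closed sector. Here $\Zc_A(S^1_x)$ is, by definition of the state sum, the image of the idempotent obtained by evaluating the state sum on the identity cylinder over $S^1_x$ equipped with its fixed marked PLCW decomposition; the computation of \cite[Sec.\,3.2]{Runkel:2018:rs} shows this idempotent is exactly $P_x$ of \eqref{eq:idempot-Px}, so $\Zc_A(S^1_x)=C_x$. Evaluating the state sum on the closed pants, copants, cup and cap and inserting the splitting $P_x=\tilde\iota_x\circ\tilde\pi_x$ reproduces the morphisms \eqref{eq:graded-center-morphisms}, which is again the content of \cite[Sec.\,3.2]{Runkel:2018:rs}; hence the closed $\Lambda_r$-Frobenius algebra underlying $\Zc_A$ is $Z^r(A)$ by definition.

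Second, the open sector. The object assigned to the interval is $A$ itself, and since open boundary components carry the trivial label $*$, the state sum on the open generators of \eqref{eq:Bord-gen-open} is insensitive to the $r$-spin data and reduces to the oriented open state-sum construction of \cite{Lauda:2007oc} (cf.\ \cite{Stern:2016stft}); it produces the Frobenius structure maps $\mu,\Delta,\eta,\varepsilon$ of $A$, the open cap evaluating to $\varepsilon$ precisely because of the prescription to compose with the counit at free boundary edges. The hypothesis on $A$ ensures $N_A^r=\id_A$, so $A$ is a $\Lambda_r$-Frobenius algebra in the sense of Definition~\ref{def:knowledgable-lambda-r-frobenius-algebra}. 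Finally, for the open-closed sector I would evaluate the state sum on the zip and cozip of \eqref{eq:Bord-gen-open-closed}: the prescription \emph{not} to multiply by the inverse of the window element at open boundary vertices is exactly what makes the zip evaluate to $\mu\circ\Delta\circ\tilde\iota_x=\iota_x$ and the cozip to $\tilde\pi_x=\pi_x$, matching the maps in \eqref{eq:AC-morphisms} specified just after \eqref{eq:idempot-Px}. Combining the three sectors, $\Zc_A$ evaluated on all generators is the knowledgeable $\Lambda_r$-Frobenius algebra $(A,Z^r(A))$, and the corollary concludes.

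I expect the only genuine bookkeeping to lie in the open-closed sector: one must verify that the zip/cozip state sums, with the two modifications listed in the construction, really do split through the idempotent $P_x$ in the way demanded by the definition of $Z^r(A)$, and that the window-element normalization comes out correctly. The closed-sector identification is essentially quoted from \cite{Runkel:2018:rs} and the open-sector identification from the oriented state-sum literature, so neither should present an obstacle.
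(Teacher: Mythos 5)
Your proposal is correct and follows essentially the same route as the paper, which simply asserts that the proposition ``directly follows from this construction'' and leaves the verification you spell out (evaluate $\Zc_A$ on the fixed generators, match against \eqref{eq:idempot-Px}, \eqref{eq:graded-center-morphisms}, \eqref{eq:A-morphisms}, \eqref{eq:AC-morphisms}, then invoke the classification corollary) implicit. One small imprecision: the open sector is not literally ``insensitive to the $r$-spin data'' --- the open cylinder among the generators \eqref{eq:Bord-gen-open} carries a nontrivial $r$-spin structure and evaluates to the Nakayama automorphism $N_A$ --- but this does not affect the substance of the argument, since those evaluations are exactly the ones handled in \cite{Stern:2016stft}.
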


	\appendix
\section{Cyclic categories and computations}\label{app:Lambda-r}

\subsection{The \texorpdfstring{$r$}{r}-cyclic category}
For convenience, we here lay out the notations, definitions, and conventions we use when discussing the $r$-cyclic category. 

\begin{defn}
	For any positive integer $r$, the \emph{$r$-cyclic category} $\Lambda_r$ has objects $[n]_r$ for $n\geq 0$. The morphisms are generated by 
	\begin{align*}
	\delta^n_i:[n-1]_r & \to [n]_r\\
	\sigma^n_i:[n+1]_r & \to [n]_r\\
	\tau_n :[n]_r & \to [n]_r
	\end{align*}
	subject to the usual simplicial relations 
	\begin{eqnarray*}
		\delta_j^{n+1}\circ\delta_i^n &= &\delta_i^{n+1}\circ\delta_{j-1}^n \;\;\;\; 0\leq i<j\leq n \\
		\sigma_j^n\circ \sigma_i^{n+1} & = & \sigma_i^n\circ \sigma^{n+1}_{j+1} \;\;\;\;\;\; 0\leq i \leq j < n
	\end{eqnarray*}
	and
	\[
	\sigma_j^n\circ\delta_i^{n+1}=\begin{cases}
	\delta_i^n\circ \sigma_{j-1}^{n-1} &0\leq i<j<n\\
	\id_{[n]} & 0\leq j< n\text{ and } i=j \text{ or } i=j+1\\
	\delta_{i-1}^n\circ\sigma_j^{n-1} & 0\leq j \text{ and } j+1<i\leq n\\
	\end{cases}
	\]
	as well as the relations
	\begin{eqnarray*}
		\tau_n\circ \delta_i^n & = & \delta_{i-1}^n\circ \tau_{n-1} \;\;\;\;\; 1\leq i\leq n \\
		\tau_n\circ \delta_0^n & = & \delta_n^n \\
		\tau_n\circ \sigma_i^n & = & \sigma_{i-1}^n\circ \tau_{n+1} \;\;\;\;\; 1\leq i \leq n \\
		\tau_n\circ \sigma_0^n &=& \sigma_n^n \circ \tau_{n+1}^2 \\
		(\tau_n)^{r(n+1)} & = & 1.
	\end{eqnarray*} 
	Note that $\Lambda_1$ is precisely Connes' cyclic category, that there is a canonical full functor $\Lambda_r\to \Lambda_1$, and that there is a faithful inclusion $\Delta\to \Lambda_r$ for any $r$.  
\end{defn}

\begin{rmk}
	There is an equivalent characterization of the morphisms in $\Lambda_r$ paralleling the description of $\Lambda$ using homeomorphisms of the circle. Consider the circle 
	\[
	S^1:= \{z\in \mathbb{C} \mid |z|=1\}
	\]
	and fix an $r$-fold cover 
	\[
	p:S^1\twoheadrightarrow S^1.
	\]
	Let $[n]$ be the set 
	\[
	  \left\lbrace\exp(\frac{2\pi \mathrm{i}}{n+1}\cdot j) \mid 0\leq j\leq n\right\rbrace
	\]
	of roots of unity. The morphisms in $\Lambda_1$ from $[n]_1\to [m]_1$ are homotopy classes of orientation-preserving degree 1 maps $f:S^1\to S^1$ such that $f([n])\subseteq [m]$. Fix a set $\{f_i\}$ of homeomorphisms $S^1$ to $S^1$ representing the morphisms from $[n]_1\to [m]_1$. Then morphisms $[n]_r\to [m]_r$ can be seen as lifts 
	\[
	\begin{tikzcd}
	S^1\arrow[r,"\tilde{f}_i"]\arrow[d, "p"'] & S^1\arrow[d,"p"] \\
	S^1\arrow[r, "f_i"'] & S^1
	\end{tikzcd}
	\]
	to the $r$-fold cover. 
	
	From this picture, it is clear that there is a forgetful functor 
	\begin{align*}
	\lambda_r: \Lambda_r & \to \Set \\
	[n]_r & \mapsto \{0,1\ldots, n\}
	\end{align*}
	which sends each morphism to the underlying map on roots of unity. 
\end{rmk}

\begin{ntt}
	We fix some notation for special morphisms in $\Lambda_r$. Using the fully-faithful inclusion $\iota:\Delta \to \Lambda_r$, we consider the following morphisms in $\Delta$ as morphisms in $\Lambda_r$:  
	\begin{eqnarray*}
		\phi_n^n : [n]_r & \to & [1]_r\\
		k & \mapsto & \begin{cases}
			1 & k=n \\
			0 & \text{otherwise}
		\end{cases}\\
		\phi^0_n : [n]_r & \to & [1]_r \\
		k & \mapsto & \begin{cases}
			0 & k=0 \\
			1 & \text{otherwise} 
		\end{cases}\\
		\psi_{n}^2: [n]_r &\to &[2]_r\\
		k & \mapsto & \begin{cases}
			2 & k=n\\
			1 & k=n-1\\
			0 & \text{otherwise}.
		\end{cases}
	\end{eqnarray*}
	The specification of these morphisms using using maps of underlying sets is unambiguous because we are considering the images of morphisms in the simplex category. Note that, in terms of the generators of $\Lambda_r$, we could equivalently define.  
	\[ 
	\phi_n^n:=\sigma_0^1\circ \cdots \circ \sigma_0^{n-2}\circ\sigma_0^{n-1}
	\]
	\[
	\phi_n^0 := \sigma_1^1 \circ \cdots \circ \sigma_{n-2}^{n-2}\circ \sigma_{n-1}^{n-1} 
	\]
	\[
	\psi_n^2 := \sigma_0^2\circ \cdots \circ \sigma_0^{n-2}\circ \sigma_0^{n-1}.
	\] 
	
	We further define 
	\[
	\psi_n^{k,n} :=\phi_n^n\circ \tau_n^{k-n}
	\]
	and 
	\[
	\psi_n^{k,0}:=\phi_n^0\circ \tau^k.
	\]
	We call these morphisms \emph{augmentation morphisms} in $\Lambda_r$. 
	
	We further define \emph{pullback morphisms} ($0\leq k\leq n$) 
	\[
	\theta_n^{n,m,k}: [m+n-1]_r \to [n]_r
	\]
	in terms of generators of $\Lambda_r$ via the formula
	\[
	\theta^{n,m,k}_n:= \sigma_k^n\circ \sigma_k^{n+1}\circ \cdots \circ \sigma_k^{n+m-2}.
	\]
	These satisfy the following commutativity relations
	\begin{eqnarray*}
		\tau_n \circ \theta^{n,m,k}_n &=& \begin{cases}
			\theta_n^{n,m,k-1}\circ \tau_{n+m-1} & k\neq 0\\
			\theta_n^{n,m,n}\circ \tau_{n+m-1}^m & k=0
		\end{cases} \\
		\tau_n^{-1} \circ \theta^{n,m,k}_n &=&\begin{cases}
			\theta_n^{n,m,k+1}\circ \tau_{n+m-1}^{-1} & k\neq n\\
			\theta_n^{n,m,0}\circ \tau_{n+m-1}^{-m} & k=n
		\end{cases} 
	\end{eqnarray*}
\end{ntt}

\begin{rmk}
	The pullback morphisms are so called because the diagram 
	\[
	\begin{tikzcd}
	{[n+m-1]}_r\arrow[r, "\theta_m^{m,n,j}"]\arrow[d,"\theta_n^{n,m,k}\circ \gamma"'] & {[m]}_r\arrow[d,"\psi_m^{j,m}\circ \tau^{-(m+1)s}_m"]\\
	{[n]}_r\arrow[r,"\psi_n^{k,0}"'] & {[1]}_r
	\end{tikzcd}
	\]
	is pullback, where $\gamma=\tau_{n+m-1}^{(j-k-m)-(n+m)s}$. These are the pullbacks we use to contract edges in structured graphs. 
\end{rmk}

\begin{rmk}
	The augmentation morphisms satisfy the following commutativity relations with respect to $\tau_n$: 
	\begin{eqnarray*}
		\tau_1\circ \phi_n^n & = & \phi_n^0\circ \tau_n^n \\
		\tau_1\circ \phi_n^0 & = & \phi_n^n\circ \tau_n \\
		\tau_1^{-1}\circ\phi^n_n &=& \phi_n^0 \circ \tau_n^{-1}\\
		\tau_1^{-1}\circ \phi_n^0 &=& \phi_n^n\circ\tau^{-n}_n.\\
	\end{eqnarray*}
	These relations will form the backbone of many of our computations in $\Lambda_r$. 
\end{rmk}

\subsection{Computations in the \texorpdfstring{$r$}{r}-cyclic category}\label{app:appendixproofs}

We here provide exemplar computations verifying lemmas about $\Lambda_r$-structured graphs. 

\begin{proof}[Proof (of Proposition~\ref{prop:fixedmovesandIsos})]
	We proceed by cases.
	\begin{itemize}
		\item[Move 1.] Let $e$ be an edge of $K_\bullet$, and let $\mathscr{M}^\prime$ denote the marking obtained by reversing the edge orientation per Move 1 of Definition~\ref{defn:fixmoves}. The only difference between $(\tilde{A}^{\mathscr{M}}_\Gamma,\mu^{\mathscr{M}})$ and $(\tilde{A}^{\mathscr{M}^\prime}_\Gamma,\mu^{\mathscr{M}^\prime})$ occurs in those morphisms which have target $e$. Locally around $e$, the functor $\tilde{A}^{\mathscr{M}}_\Gamma$
		yields the diagram  
		\[
		\begin{tikzcd}
		\phantom{A} &  && && &  \phantom{A}\\
		\vdots	& {[n]}_r\arrow[rr,"\psi^{k,n}_n\circ \tau^{-(n+1)s_e}"] \arrow[ul]\arrow[dl]&& {[1]}_r && {[m]}_r\arrow[ll,"\psi^{j,0}_m"']\arrow[ur]\arrow[dr] &\vdots  \\
		\phantom{A} &  && && &  \phantom{A}
		\end{tikzcd}
		\]
		and the functor $\tilde{A}^{\mathscr{M}^\prime}_\Gamma$ yields the diagram 
		\[
		\begin{tikzcd}
		\phantom{A} &  && &&& &  \phantom{A}\\
		\vdots	& {[n]}_r\arrow[rr,"\psi^{k,0}_n"]\arrow[ul]\arrow[dl] && {[1]}_r &&& {[m]}_r\arrow[lll,"\psi^{j,0}_m\circ \tau^{-(m-1)(-s_e-1)}"']\arrow[ur]\arrow[dr] & \vdots \\
		\phantom{A} &  && &&& &  \phantom{A}
		\end{tikzcd}
		\]
		We claim that $\eta(e,\tau^{2s_e+1})$ defines an isomorphism $(\tilde{A}^{\mathscr{M}}_\Gamma,\mu^{\mathscr{M}})\to (\tilde{A}^{\mathscr{M}^\prime}_\Gamma,\mu^{\mathscr{M}^\prime})$. To see that this is, indeed the case, we simply compute 
		\begin{eqnarray*}
			\tau^{2s_e+1}\circ\psi_n^{k,n}\circ\tau^{-(n+1)s_e}&=&\tau^{2s_e+1}\circ\phi_n^{n}\circ\tau^{-(n+1)s_e+(k-n)}\\
			&=& \tau\circ\phi_n^n\circ\tau^{(k-n)}\\
			&=& \phi_n^0 \circ \tau^n\circ \tau^{k-n}\\
			&=&\phi_n^0\circ\tau^k=\psi_n^{k,0}
		\end{eqnarray*} 
		and
		\begin{eqnarray*}
			\tau^{2s_e+1}\circ \psi_m^{j,0} &=& \tau^{2s_e+1}\circ \phi_m^0\circ \tau^j\\
			&=& \tau\circ \phi_m^0 \circ \tau^{(m+1)s_e+j} \\
			&=& \phi_m^m \circ \tau \circ \tau^{(m+1)s_e+j}\\
			&=& \phi_m^m \circ \tau^{m+1-m}\circ \tau^{(m+1)s_e+j}\\
			&=& \phi_m^m \circ \tau^{(m+1)(s_e+1)+j-m}\\
			&=& \phi_m^m \circ \tau^{j-m}\circ \tau^{-(m+1)(-s_e-1)}\\
			&=&\psi_m^{j,m}\circ \tau^{-(m+1)(-s_e-1)}
		\end{eqnarray*} 
		which shows that the target of $\eta(e,\tau^{2s_e+1})$ is indeed $(\tilde{A}^{\mathscr{M}^\prime}_\Gamma,\mu^{\mathscr{M}^\prime})$, as desired. 
	\item[Move 2.] In light of our proof for Move 1, it will suffice to confirm Move 2b, as in the image
		\begin{center}
			\tikzfig{move2b}
		\end{center}
		We will denote by $\mathscr{M}$ the marking on the left, and by $\mathscr{M}^\prime$ that on the right. Here, the PLCW decomposition only changes around the vertex $v$ corresponding the the pictured polygon. Locally around $v$, the functors $\tilde{A}_\Gamma^{\mathscr{M}}$ and $\tilde{A}_\Gamma^{\mathscr{M}}$ look like
		\[
		\begin{tikzcd}
		\phantom{A} &\cdots & \phantom{A} \\
		\phantom{A} & {[n]}_r\arrow[d,"\psi^{0,n}_n\circ \tau^{-(n+1)s_e}"]\arrow[ur]\arrow[ul]\arrow[r]\arrow[l] & \phantom{A}\\
		&{[1]}_r &
		\end{tikzcd}\quad \text{and}\quad \begin{tikzcd}
		\phantom{A} &\cdots & \phantom{A} \\
		\phantom{A} & {[n]}_r\arrow[d,"\psi^{n,n}_n\circ \tau^{-(n+1)(s_e+1)}"]\arrow[ur]\arrow[ul]\arrow[r]\arrow[l] & \phantom{A}\\
		&{[1]}_r &
		\end{tikzcd}
		\]
		respectively.	
	
		We claim that $\eta(v,\tau^{-1})$ provides an isomorphism $(\tilde{A}^{\mathscr{M}}_\Gamma,\mu^{\mathscr{M}})\to (\tilde{A}^{\mathscr{M}^\prime}_\Gamma,\mu^{\mathscr{M}^\prime})$. To see this, we first compute 
		\begin{eqnarray*}
			\psi_n^{0,n}\circ \tau^{-(n+1)s_e}\circ \tau^{-1}
			&=&\phi_n^n\circ \tau^{-n-1}\circ \tau^{-(n+1)s_e}\\
			&=&\phi_n^n\circ \tau^{-(n+1)(s_e+1)}\\
			&=&\psi^{n,n}_n\circ \tau^{-(n+1)(s_e+1)}
		\end{eqnarray*} 
		We then compute that, for $k\neq 0$ 
		\begin{eqnarray*}
			\psi^{k,n}_n \circ \tau^{-(n+1)s}\circ \tau^{-1} &= & \phi_n^n\circ \tau^{k-n-1}\circ \tau^{-(n+1)s}\\
			&=&\psi^{k-1,n}_n \circ \tau^{-(n+1)s}
		\end{eqnarray*}
		showing that the target of $\eta(v,\tau^{-1})$ is indeed $(\tilde{A}^{\mathscr{M}^\prime}_\Gamma,\mu^{\mathscr{M}^\prime})$.
		\item[Move 3.] Deck transformations can be obtained as iterated applications of Move 2, so we see by the previous argument that deck transformations correspond to $\eta(v,\tau^{-(n+1)k})$. 
	\end{itemize} 
	We therefore get a well-defined map 
	\[
	\xi: \mathcal{M}^K(S,M)_{/\sim_{\on{fix}}} \to \Lambda_r(\Gamma_K)_{\sim_{\on{iso}}}.
	\]
	
	However, it is clear  that every $\Lambda_r$-structure on $\Gamma$ is isomorphic to one in the image of $\xi$, so it remains only to show that an isomorphism between $\Lambda_r$ structures lying in the image of $\xi$ implies that the corresponding markings on $K_\bullet$ are related by a sequence of PLCW moves. However, by Lemma~\ref{lem:factoringgraphisos} and the fact that $\tau$ generates the automorphism group of $[n]_n$, it will therefore suffice to show that this is true for $\eta(e,\tau)$, $\eta(e,\tau^{-1})$, $\eta(v,\tau)$, and $\eta(v,\tau^{-1})$ for all edges $e$ and vertices $v$ of $\Gamma$. This, however, is immediate from our proof for Move 1 (since $\tau^{2s_e+1}=\tau$) and our proof of Move 2.  
\end{proof}

	\phantomsection
	\addcontentsline{toc}{section}{References}

\newpage 
\end{document}